\numberwithin{equation}{section}
\newtheorem{thm}{Theorem}[section]
\newtheorem{lem}[thm]{Lemma}
\newtheorem{prop}[thm]{Proposition}
\newtheorem{cor}[thm]{Corollary}
\newtheorem*{thrm}{Main Theorem}
\theoremstyle{definition}
\newtheorem{defn}[thm]{Definition}
\theoremstyle{remark}
\newtheorem{rmk}[thm]{Remark}
\DeclareMathAlphabet{\mathbbe}{U}{bbold}{m}{n}
\def\endash{\mathchar"707B}
\newcommand{\Id}{\operatorname{Id}}
\renewcommand\hom{\operatorname{Hom}}
\DeclareMathOperator{\colim}{colim}
\newcommand{\lra}{\longrightarrow}
\newcommand{\Ch}{\mathsf{Ch}}
\newcommand{\A}{\mathsf A}
\newcommand{\cA}{\mathcal A}
\newcommand{\cE}{\mathcal E}
\newcommand{\cF}{\mathcal F}
\newcommand{\cO}{\mathcal O}
\newcommand{\oF}{\overline {\mathcal F}}
\newcommand{\cD}{\mathcal D}
\newcommand{\cG}{\mathcal G}
\newcommand{\cT}{\mathcal T}
\newcommand{\cK}{\mathcal K}
\newcommand{\cC}{\mathcal C}
\newcommand{\bQ}{\mathbb Q}
\newcommand{\bR}{\mathbb R}
\newcommand{\mA}{\mathrm A}
\newcommand{\tcD}{\widetilde{\mathcal D}}
\newcommand{\tcT}{\widetilde{\mathcal T}}
\newcommand\cL{\mathcal L}
\newcommand{\cP}{\mathcal{P}}
\newcommand\cell {\endash \mathrm {cell}\endash}
\newcommand\adjunct[4]{\xymatrix@C=2pc@R=2pc{#1\ar @<1.25ex>[rr]^{#3}&\perp&#2\ar @<1.25ex>[ll]^{#4}}}
\newcommand{\RR}{\mathbb{R}}
\begin{document}
\title{An algebraic model for rational $SO(3)$--spectra}

\author[K\c{e}dziorek]{Magdalena K\c{e}dziorek}

\address{MATHGEOM \\
    \'Ecole Polytechnique F\'ed\'erale de Lausanne \\
    CH-1015 Lausanne \\
    Switzerland}
\email{magdalena.kedziorek@epfl.ch}

\date {\today }

 \keywords {equivariant spectra, model categories, algebraic model}
 \subjclass [2010] {Primary: { 55N91, 55P42, 55P60}}

 \begin{abstract}
Greenlees established an equivalence of categories between the homotopy category of rational $SO(3)$--spectra and the derived category $D\cA(SO(3))$ of a certain abelian category. In this paper we lift this equivalence of homotopy categories to the level of Quillen equivalences of model categories. Methods used in this paper provide the first step towards obtaining an algebraic model for the toral part of rational $G$--spectra, for any compact Lie group $G$.
 \end{abstract}
\maketitle
\tableofcontents
\section{Introduction}

\paragraph{\bf{Modelling the category of rational $G$--spectra}}
This paper is a contribution to the study of $G$--equivariant cohomology theories and gives a complete analysis for one class of theories, namely rational $SO(3)$--equivariant cohomology theories. To start with, let $G$ be a compact Lie group. Recall that $G$--equivariant cohomology theories are represented by $G$--spectra, so that the category of $G$--equivariant cohomology theories is equivalent to the homotopy category of $G$--spectra. The category of $G$--spectra is quite complicated, with rich structures coming from two sides: topological and group action, and one cannot expect a complete analysis of either cohomology theories or spectra integrally.

Rationalising this category reduces topological complexity simplifying it greatly. At the same time interesting equivariant behaviour remains. In order to understand this behaviour, we try to find a purely algebraic description of the category, that is an algebraic model category Quillen equivalent to the category of rational $G$--spectra. As a result, the homotopy category of the algebraic model is equivalent to the rational stable $G$--homotopy category via triangulated equivalences. Moreover all the homotopy information, such as homotopy limits, in both is the same. 

The conjecture by Greenlees states that for any compact Lie group $G$ there is a nice graded abelian category $\cA(G)$, such that the category $d\cA(G)$ of differential objects in $\cA(G)$ with a certain model structure is Quillen equivalent to the category of rational $G$--spectra
$$G\endash\text{Sp}_\bQ \simeq_Q d\cA(G).$$ 
If we find such $d\cA(G)$ we say that $d\cA(G)$ is an \emph{algebraic model} for rational $G$--spectra. \\

\paragraph{\textbf{Existing work}}
There are several examples of specific Lie groups $G$ for which an algebraic model has been given. Firstly, when $G$ is trivial, it was shown in \cite[Theorem 1.1]{ShipleyHZ} that rational spectra are monoidally Quillen equivalent to chain complexes of $\bQ$--modules. An algebraic model for rational $G$--spectra for finite $G$ is described in \cite[Example 5.1.2]{SchwedeShipleyMorita} and simplified in \cite{BarnesFiniteG} and \cite{KedziorekExceptional}.   An algebraic model for rational torus equivariant spectra was presented in \cite{GreenShipleyT-equiv}, whereas a slightly different approach in \cite{BGKS} gives a \emph{symmetric monoidal} algebraic model for $SO(2)$. This was recently used in \cite{BarnesO(2)new} to provide an algebraic model for rational $O(2)$--spectra.

However, there is no algebraic model known for the whole category of rational $G$--spectra for an arbitrary compact Lie group $G$.  A first step in this direction, a model for rational $G$--spectra over an exceptional subgroup (see Definition \ref{defn:exc_subgp}) for any compact Lie group $G$, was provided in \cite{KedziorekExceptional}. This result is used in Section \ref{Exceptional part_chapter}. \\

\paragraph{\textbf{The group $SO(3)$}}
The group $SO(3)$ is the group of rotations of $\bR^3$. This is the natural next candidate to analyse on the way to understand the behaviour of $d\cA(G)$ for an arbitrary compact Lie group $G$. Notice that $SO(3)$ is significantly more complicated than all groups considered so far, since  it is the first group where the maximal torus is not normal in the whole group. Dealing with this complication shows a method to provide an algebraic model for a part of rational $G$--spectra called \emph{toral} for any compact Lie group $G$. The toral part models those $G$--spectra whose geometric isotropy is a set of subgroups of the maximal torus and corresponds to cohomology theories with toral support. We discuss this further in Remark \ref{rmk:generalGtoN}. \\

\paragraph{\textbf{Main result}}
Let $G$ be $SO(3)$. In this paper we work with orthogonal $G$--spectra, see \cite{MandellMay}. By \cite{Barnes_Splitting}, the category of rational $SO(3)$--orthogonal spectra splits into three parts: toral, dihedral and exceptional. This uses idempotents of the rational Burnside ring $\A(SO(3))_\bQ$ (see Section \ref{ideBurnside}), and reflects a similar splitting at the level of homotopy category. 

The toral part models rational $SO(3)$--spectra with geometric isotropy in the family of subconjugates of the maximal torus $SO(2)$ in $SO(3)$. The dihedral part models rational $SO(3)$--spectra with geometric isotropy in the collection of subgroups $\cD$, which consists of all dihedral subgroups of order greater than $4$ and $O(2)$. The last part, which we call the exceptional part, models rational $SO(3)$--spectra with geometric isotropy in the collection of subgroups $\cE$, which consists of all remaining subgroups (see Section \ref{subgroups}). Thus we are able to work with each of these three parts separately to obtain an algebraic model for rational $SO(3)$--spectra.

The main result of this paper is as follows.
\begin{thrm}
There is a zig--zag of Quillen equivalences between rational $SO(3)$--orthogonal spectra and the algebraic category $d\cA(SO(3))$. 
\end{thrm}
The category $d\cA(SO(3))$, which we call the \textbf{algebraic model for rational $SO(3)$--spectra}, is a product of three parts, which reflects the splitting of the category of rational $SO(3)$--spectra 

$$d\cA(SO(3))\cong d\cA(SO(3),\cT) \times \Ch(\cA(SO(3),\cD))\times \prod_{(H), H\in \cE}\Ch(\bQ[W_{SO(3)}H]).$$

Here $d\cA(SO(3),\cT)$ is the \textbf{algebraic model for the toral part} described in Section \ref{The category A(SO(3),c)}, $\Ch(\cA(SO(3),\cD))$ is the \textbf{algebraic model for the dihedral part} described in Section \ref{A(D)} and $\Ch(\bQ[W_{SO(3)}H])$ is the \textbf{algebraic model for the rational $SO(3)$--spectra over an exceptional subgroup $H$} discussed in Section \ref{chainExcep}.
Since $\cA(SO(3),\cT)$ is a graded abelian category we use the notation $d\cA(SO(3),\cT)$ for differential objects in there. We use notation $\Ch(\cA(SO(3),\cD))$ for differential graded objects (i.e. chain complexes) in $\cA(SO(3),\cD)$, since $\cA(SO(3),\cD)$ doesn't have a grading.

The Main Theorem follows from Proposition \ref{prop:splittingSO(3)}, Theorem \ref{thm:toral_result}, Theorem \ref{thm:summary_d} and Theorem \ref{thm:exceptional_part}.\\

\paragraph{\textbf{Contribution of this paper}}The new idea in this paper concerns the toral part in Section \ref{cyclicPart}. Since the maximal torus is not normal in $SO(3)$ the algebraic model for the toral part gets more complicated than that for the torus \cite{GreenleesS1}, \cite{BGKS} or $O(2)$ \cite{BarnesO(2)new}. To control these complications we use the following method. We consider the restriction--coinduction adjunction between the toral part of rational $SO(3)$--spectra and the toral part of rational $O(2)$--spectra. Here $O(2)$ is the normaliser of the maximal torus in $SO(3)$. This adjunction is a Quillen adjunction, but not a Quillen equivalence. 

However, the cellularisation principle of \cite{GreenShipCell} (see Section \ref{sec:cellularisation} for the definition of cellularisation) gives a Quillen equivalence between the toral part of rational $SO(3)$--spectra and a certain cellularisation of the toral part of rational $O(2)$--spectra see Theorem \ref{CyclicRestrictionEquivalence}. Now it is enough to cellularise the algebraic model for the toral part of rational $O(2)$--spectra and simplify this category, see Section \ref{section_toral_reduction_to_O2}, to obtain the model for the toral part of rational $SO(3)$--spectra.

The idea of using the restriction--coinduction adjunction between the toral part of rational $G$--spectra and the toral part of rational $N_G\mathbb{T}$--spectra (where $\mathbb{T}$ is the maximal torus in $G$) together with the cellularisation principle allows one to provide an algebraic model for the toral part of rational $G$--spectra, for any compact Lie group $G$ \cite{BGK}.

The method to obtain the algebraic model for the dihedral part of rational $SO(3)$--spectra is a slight alteration of the method for the dihedral part for rational $O(2)$--spectra from \cite{BarnesO(2)new} and is presented in Section \ref{subsec:comparison_dihedral}. Some changes in the proof from \cite{BarnesO(2)new} are needed to take into account the fact that our dihedral part excludes subgroups conjugate to $D_2$ and $D_4$ (for reasons explained in Section \ref{subgroups}), whereas the dihedral part of $O(2)$--spectra contains them. However, the idea of the proof remains the same.

Finally, an algebraic model of the exceptional part is an application of the methods from \cite{KedziorekExceptional}. We point out that this is the only part of the paper that considers monoidal structures and gives a monoidal algebraic model.\\

\paragraph{\textbf{Outline of the paper}}
This paper is structured as follows. In Section \ref{General results for SO(3)_chapter} we present some general results about subgroups of $SO(3)$, its rational Burnside ring $\A(SO(3))_\bQ$ and the idempotents used to split the category of rational $SO(3)$--spectra into three parts: toral, dihedral and exceptional (Proposition \ref{prop:splittingSO(3)}). Section \ref{cyclicPart} is the heart of this paper. It contains the description of the algebraic model for the toral part of rational $SO(3)$--spectra. It also presents Quillen equivalences used in obtaining this algebraic model from the algebraic model for toral rational $O(2)$--spectra. Section \ref{Dihedral part_chapter} contains the algebraic model for the dihedral part. Finally, in Section \ref{Exceptional part_chapter} we recall the results from \cite{KedziorekExceptional} to give an algebraic model for the exceptional part of rational $SO(3)$--spectra.\\

\paragraph{\textbf{Notation}}We will stick to the convention of drawing the left adjoint above (or to the left of) the right one in any adjoint pair. We use the notation $G\endash\mathrm{Sp}$ for the category of $G$--equivariant orthogonal spectra. \\

\paragraph{\textbf{Acknowledgments}}
This work is based on a part of my PhD thesis (under the supervision of John Greenlees) and I would like to thank John Greenlees and David Barnes for many useful discussions and comments.

%%%%%%%%%%%%%%%%%%%%%%%%%%%%%%%%%%%%%%%%%%%%%%%%
\section{General results for SO(3)}\label{General results for SO(3)_chapter}

We start this part by considering the closed subgroups of $SO(3)$ in Section \ref{subgroups}. We discuss the space $\cF(G)/G$, which is the orbit space of all closed subgroups with finite index in their normaliser, where the topology is induced from the Hausdorff metric, see \cite[Section V.2]{LewisMaySteinberger}.In Section \ref{sec:BousfieldAndCell} we recall two ways of changing a given stable model structure: left Bousfield localisation at an object and cellularisation. We will use these techniques repeatedly throughout the paper. In Section \ref{ideBurnside} we discuss the idempotents of the rational Burnside ring $\A(SO(3))_\bQ$ and the induced splitting of rational $SO(3)$--orthogonal spectra. The main part of Section \ref{ideBurnside} consists of the analysis of two adjunctions: the induction--restriction and restriction--coinduction adjunctions in relation to localisations of categories of equivariant spectra at idempotents.

\subsection{Closed subgroups of $SO(3)$}\label{subgroups}

Recall that $SO(3)$ is a group of rotations of $\RR^3$. We choose a maximal torus $T$ in $SO(3)$ with rotation axis the $z$-axis. We divide the closed subgroups of $G$ into three types: {\bf toral $\cT$, dihedral $\cD$ and exceptional $\cE$}. This division is motivated by the choice of idempotents in the rational Burnside ring for $SO(3)$, that we will use to split the category of rational $SO(3)$--spectra. 

The toral part consist of all tori in $SO(3)$ and all cyclic subgroups of these tori. Note that for any natural number $n$ there is one conjugacy class of subgroups from the toral part of order $n$ in $SO(3)$. 

The dihedral part consists of all dihedral subgroups $D_{2n}$ (dihedral subgroups of order $2n$) of $SO(3)$ where $n$ is greater than 2, together with all subgroups $O(2)$. Note that $O(2)$ is the normaliser for itself in $SO(3)$. Moreover there is only one conjugacy class of a dihedral subgroup $D_{2n}$ for each $n$ greater than $2$, and the normaliser of $D_{2n}$ in $SO(3)$ is $D_{4n}$ for $n>2$. Notice that we excluded subgroups in the conjugacy classes of $D_2$ and $D_4$ from this part. Conjugates of $D_2$ are excluded from the dihedral part, since $D_2$ is conjugate to $C_2$ in $SO(3)$ and that subgroup is already taken into account in the toral part. Conjugates of $D_4$ are excluded from the dihedral part since its normalizer in $SO(3)$ is $\Sigma_4$ (symmetries of a cube), thus its Weyl group $\Sigma_4/D_4$ is of order $6$, whereas all other finite dihedral subgroups $D_{2n}, n>2$ have Weyl groups of order $2$. For simplicity we decided to treat $D_4$ seperately, and put it into the exceptional part. 

There are five conjugacy classes of subgroups which we call exceptional, namely $SO(3)$ itself, the rotation group of a cube $\Sigma_4$,  the rotation group of a tetrahedron $A_4$, the rotation group of a dodecahedron $A_5$  and $D_4$, the dihedral group of order 4. Normalisers of these exceptional subgroups are as follows: $\Sigma_4$ is equal to its normaliser, $A_5$ is equal to its normaliser and the normaliser of $A_4$ is $\Sigma_4$, as is the normaliser of $D_4$.

Consider the space $\cF(SO(3))/SO(3)$ of conjugacy classes of subgroups of $SO(3)$ with finite index in their normalisers. The topology on this space is induced by the Hausdorff metric. We will use this space for choosing idempotents of the rational Burnside ring in Section \ref{ideBurnside}. 

\begin{center}
\begin{tikzpicture}[dot/.style={circle, inner sep=1pt,fill,label={#1},name=#1,color=black}]
\usetikzlibrary{calc}
\coordinate[label=$\mathrm{Space}\ \cF(SO(3))/SO(3)$] (O) at (3,8);
\coordinate[label=$\mathrm{Part\ (Subspace)}$] (O) at (-3,8);
\coordinate[label=$\cE$] (F) at (-3,7);
\coordinate [label= above:SO(3)]  (A) at (0,7);
\coordinate [label= above:$\Sigma_4$] (B) at (2,7) ;
\coordinate [label= above:$A_4$] (C) at (4,7) ;
\coordinate [label= above:$A_5$] (D) at (6,7) ;  
\coordinate [label= above:$D_4$] (E) at (8,7) ;

\fill[black] (A) circle (2pt);
\fill[black] (B) circle (2pt);
\fill[black] (C) circle (2pt);
\fill[black] (D) circle (2pt);
\fill[black] (E) circle (2pt);

\coordinate [label=$\cT$]  (FF) at (-3,5.5);
\coordinate[label=above:$T$] (M) at (0,5.5);

\coordinate [label=$\cD$]  (FG) at (-3,4);
\coordinate [label= above:$D_{6}$]  (H) at (5,4);
\coordinate [label= above:$D_{8}$]  (I) at (6,4);
\coordinate [label= above:$D_{10}$]  (L) at (6.6,4);
\coordinate[label=$...$] (J) at (7.3,3.86);
\coordinate[label=above:$O(2)$] (K) at (8,4);

\fill[black] (H) circle (2pt);
\fill[black] (I) circle (2pt);
\fill[black] (K) circle (2pt);
\fill[black] (L) circle (2pt);
\fill[black] (M) circle (2pt);

\end{tikzpicture}
\end{center}
 
\ \\

The topology on $\cE$ is discrete, $\cT$ consists of one point $T$ and $\cD$ forms a sequence of points converging to $O(2)$.

Before we go any further we recall the space $\cF(O(2))/O(2)$. It consists of two parts: toral and dihedral. To distinguish between these parts and their analogues for $SO(3)$ we choose the notation $\tcT$ for the toral part of $O(2)$ and $\tcD$ for the dihedral part of $O(2)$ (Note, that in \cite{BarnesO(2)new} the notation without tilda was used for the toral and dihedral parts of $O(2)$). We will stick to this new notation convention throughout the paper. The toral part is just one point $T$ corresponding to the maximal torus and all its subgroups. The dihedral part corresponds to all dihedral subgroups together with $O(2)$ and we present it below.
 
\begin{center}
\begin{tikzpicture}[dot/.style={circle, inner sep=1pt,fill,label={#1},name=#1,color=black}]
\usetikzlibrary{calc}
\coordinate[label=$\mathrm{Space}\ \cF(O(2))/O(2)$] (O) at (3,6);
\coordinate[label=$\mathrm{Part\ (Subspace)}$] (O) at (-3,6);

\coordinate [label=$\tcT$]  (FF) at (-3,5.5);
\coordinate[label=above:$T$] (M) at (0,5.5);

\coordinate [label=$\tcD$]  (FG) at (-3,4);
\coordinate [label= above:$D_2$]  (F) at (0,4);
\coordinate [label= above:$D_4$]  (G) at (3,4);
\coordinate [label= above:$D_{6}$]  (H) at (5,4);
\coordinate [label= above:$D_{8}$]  (I) at (6,4);
\coordinate [label= above:$D_{10}$]  (L) at (6.6,4);
\coordinate[label=$...$] (J) at (7.3,3.86);
\coordinate[label=above:$O(2)$] (K) at (8,4);

\fill[black] (F) circle (2pt);
\fill[black] (G) circle (2pt);
\fill[black] (H) circle (2pt);
\fill[black] (I) circle (2pt);
\fill[black] (K) circle (2pt);
\fill[black] (L) circle (2pt);
\fill[black] (M) circle (2pt);

\end{tikzpicture}
\end{center}
 
Notice that the only difference in the dihedral parts for $O(2)$ and $SO(3)$ is captured by the fact that the dihedral part for $O(2)$ is a disjoint union of $\cD$ and two points (corresponding to $D_2$ and $D_4$ respectively). 
At a first glance the toral part for $SO(3)$ looks the same as the toral part for $O(2)$. However, for $SO(3)$ it contains information about $D_2$ (since $D_2$ is conjugate to $C_2$ in $SO(3)$), whereas for $O(2)$ it does not. These differences will become significant in Section \ref{ideBurnside}.

%%%%%%%%%%%%%%%%%%%%%%%%%%%%%%%%%%%%%%%%%%%%%%%%%%%%%

\subsection{Left Bousfield localisation and cellularisation}\label{sec:BousfieldAndCell}
In this section we briefly recall two ways of changing a given stable model structure: left Bousfield localisation at an object and cellularisation. We will repeatedly use them in the rest of the paper.

\subsubsection{Left Bousfield localisation at an object.}

For details on left Bousfield localisation at an object we refer the reader to \cite[Section IV.6]{MandellMay}. We recall the following result, which is \cite[Chapter IV, Theorem 6.3]{MandellMay}.

\begin{thm}\label{thm:LBLatE_MM}Suppose $E$ is a cofibrant object in $G\endash\mathrm{Sp}$ or a cofibrant based $G$--space. Then there exists a new model structure on the category $G\endash\mathrm{Sp}$, where a map $f:X \lra Y$ is
\begin{itemize}[noitemsep]
\item a weak equivalence if it is an $E$--equivalence, i.e. $Id_E\wedge f: E\wedge X \lra E\wedge Y$ is a weak equivalence
\item cofibration if it is a cofibration with respect to the stable model structure
\item fibration if it has the right lifting property with respect to all trivial cofibrations.
\end{itemize}
The $E$--fibrant objects $Z$ are the $E$--local objects, i.e. such that $[f, Z]^G: [Y,Z]^G \lra [X,Z]^G$ is an isomorphism for all $E$--equivalences $f$.  
$E$--fibrant approximation gives Bousfield localisation $\lambda : X\lra L_EX$ of $X$ at $E$. 
\end{thm}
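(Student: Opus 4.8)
The plan is to realise the triple $(\WE_E,\Cof,\Fib_E)$---with $\WE_E$ the class of $E$--equivalences, $\Cof$ the class of stable cofibrations, and $\Fib_E$ the class of maps with the right lifting property against $\Cof\cap\WE_E$---as the left Bousfield localisation of the stable model structure on $G\endash\mathrm{Sp}$, and then to identify its fibrant objects with the $E$--local objects.

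First I would dispose of the formal axioms. As $\WE_E$ is the preimage of the class of stable weak equivalences under the functor $E\wedge-$, it satisfies two--out--of--three and is closed under retracts; stable cofibrations are closed under retracts; and the lifting axiom of $\Cof\cap\WE_E$ against $\Fib_E$ holds by the definition of $\Fib_E$. Since $E$ is cofibrant, $E\wedge-$ preserves stable weak equivalences, so $\WE\subseteq\WE_E$, whence $\Fib_E\subseteq\Fib$ and $\Fib\cap\WE\subseteq\Fib_E\cap\WE_E$. By the retract argument, what is left reduces to (i) a functorial factorisation of an arbitrary map as a map in $\Cof\cap\WE_E$ followed by a map in $\Fib_E$, and (ii) the identification $\Fib_E\cap\WE_E=\Fib\cap\WE$; given (i) and (ii), the stable $(\Cof,\,\Fib\cap\WE)$--factorisation serves as the remaining factorisation axiom and the stable lifting of $\Cof$ against $\Fib\cap\WE$ as the remaining lifting axiom.

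The heart of the matter is (i), which amounts to constructing a functorial $E$--fibrant replacement $\lambda_X\colon X\to L_EX$. I would build it by the small object argument applied to a suitable \emph{set} $J_E$ of maps---the generating $E$--trivial cofibrations---chosen so that a stably fibrant object has the right lifting property against $J_E$ exactly when it is $E$--local, i.e.\ when $[f,Z]^G$ is an isomorphism for every $E$--equivalence $f$ (equivalently $[A,Z]^G_\ast=0$ for every $E$--acyclic $A$, where $E\wedge A\simeq\ast$). The only real work is to show that such a set exists, and here I would invoke Bousfield's cardinality argument: orthogonal $G$--spectra form a cellular model category generated up to weak equivalence by a set of small objects (built from the orbits $G/H_+$), so there is a cardinal $\kappa$---depending on $E$ and the generating cofibrations---for which $E$--locality of a stably fibrant $Z$ is already detected by the $E$--equivalences between cofibrant objects of cardinality less than $\kappa$. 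Letting $J_E$ consist of the generating stable trivial cofibrations together with the pushout--products of the generating cofibrations with cofibrations chosen to replace these test $E$--equivalences then does the job; alternatively one observes that ``$E$--local'' coincides with ``$S$--local'' for a set of maps $S$ manufactured from $E$ and the generating cofibrations, and quotes the general left Bousfield localisation formalism for cellular topological model categories as in \cite[Ch.\ IV]{MandellMay}. Running the small object argument on $J_E$ supplies $L_E$ and the factorisation (i); the standard argument relating lifting properties to homotopy function spectra then gives (ii) and shows that $\Fib_E$ is precisely the class of maps with the right lifting property against $J_E$, so the localised structure is cofibrantly generated.

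Finally I would check that $E$--fibrant coincides with $E$--local. An $E$--fibrant $Z$ has the right lifting property against all of $\Cof\cap\WE_E$, in particular against the test $E$--equivalences, and unwinding this through homotopy function spectra shows that $[f,Z]^G$ is an isomorphism for every $E$--equivalence $f$; conversely any such $Z$ is stably fibrant and orthogonal to $\Cof\cap\WE_E$, hence $E$--fibrant. Consequently $\lambda_X\colon X\to L_EX$ is an $E$--trivial cofibration with $E$--fibrant target, which is exactly the Bousfield localisation of $X$ at $E$. The step I expect to be the main obstacle is the cardinality bound that keeps $J_E$ a set rather than a proper class, together with its companion---that $E$--acyclicity, equivalently $E$--locality, is detected on a set of objects; once these are in place, the rest is routine bookkeeping with the small object argument and the lifting axioms.
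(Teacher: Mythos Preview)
The paper does not prove this theorem: it is stated as a direct citation of \cite[Chapter IV, Theorem 6.3]{MandellMay}, with no argument supplied. Your sketch is a reasonable outline of the standard Bousfield localisation argument (and is essentially what Mandell--May carry out), so there is no mathematical gap; but you are supplying a proof where the paper is content simply to quote the result from the literature.
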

 
We use the notation $L_E(G\endash\mathrm{Sp})$ for the model category described above and will refer to it as a \textbf{left Bousfield localisation of the category of  $G$--spectra at $E$}. Notice that if $E$ and $F$ are cofibrant objects in $G\endash\mathrm{Sp}$  then the localisation first at $E$ and then at $F$ is the same model category as the localisation at $E\wedge F$ (and $F\wedge E$).

Recall that, an $E$--equivalence between $E$--local objects is a weak equivalence (see \cite[Theorems 3.2.13 and 3.2.14]{Hirschhorn}).

In this paper $X \in G\endash\mathrm{Sp}$ is usually of a form $eS_{\bQ}$ where $e$ is an idempotent of a rational Burnside ring $\mA(G)_\bQ$ and $S_{\bQ}$ is a rational sphere spectrum (see \cite[Section 5]{BarnesFiniteG} for construction of the rational sphere spectrum $S_\bQ$). Since we use idempotents of rational Burnside ring all our localisations are smashing (see \cite{Ravenel_Localization} for definition of a smashing localisation). Thus they preserve homotopically compact generators, see Definition \ref{compactobj}, since the fibrant replacement preserves infinite coproducts.

\subsubsection{Cellularisation.}\label{sec:cellularisation}
A cellularisation of a model category
is a right Bousfield localisation at a set of objects.
Such a localisation exists by \cite[Theorem 5.1.1]{Hirschhorn}
whenever the model category is right proper and cellular. 
When we are in a stable context the results of \cite{BarnesConstanze}
can be used.

In this section we recall the notion of cellularisation when $\cC$ is a stable model category and some of basic definitions and results.

\begin{defn}
Let $\cC$ be a stable model category and $K$ a stable set of objects of $\cC$, i.e. a set such that a class of $K$--cellular objects of $\cC$ is closed under desuspension (Note that the class is always closed under suspension). We call $K$ a set of \textbf{cells}.
We say that a map $f : A \longrightarrow B$ of $\cC$ is a \textbf{$K$--cellular equivalence} if
the induced map
\[
[k,f]^\cC_*: [k,A]^\cC_* \longrightarrow [k,B]^\cC_*
\]
is an isomorphism of graded abelian groups for each $k \in K$. An object $Z \in \cC$ is said to be
\textbf{$K$--cellular} if
\[
[Z,f]^\cC_*: [Z,A]^\cC_* \longrightarrow [Z,B]^\cC_*
\]
is an isomorphism of graded abelian groups for any $K$--cellular equivalence $f$.
\end{defn}

\begin{defn}
A \textbf{right Bousfield localisation} or \textbf{cellularisation} of $\cC$ with respect to
a set of objects $K$ is a model structure $K \cell \cC$ on $\cC$ such that
\begin{itemize}[noitemsep]
\item the weak equivalences are $K$--cellular equivalences
\item the fibrations of $K \cell \cC$ are the fibrations of $\cC$
\item the cofibrations of $K \cell \cC$ are defined via left lifting property.
\end{itemize}
\end{defn}

By \cite[Theorem 5.1.1]{Hirschhorn}, if $\cC$ is a right proper, cellular model category
and $K$ a set of objects in $\cC$, then the cellularisation of $\cC$ with respect to $K$, $K \cell \cC$,
exists and is a right proper model category.
The cofibrant objects of $K \cell \cC$
are called $K$--cofibrant and are precisely the
$K$--cellular and cofibrant objects of $\cC$.

The cellularisation of a right proper, cellular, stable model category at a stable set of cofibrant objects $K$ is very well behaved (see \cite[Theorem 5.9]{BarnesConstanze}), in particular it is proper, cellular and stable.

There is another important property we will often want the cells to satisfy,
which makes right localisation behave in an even more tractable manner,
see \cite[Section 9]{BarnesConstanze}. This property is variously called
small, compact or finite. We chose to call it \emph{homotopically compact}, since there are several different meanings of compactness in the literature.

\begin{defn}\cite[Definition 2.1.2]{SchwedeShipleyMorita}\label{def:hocompact}
An object $X$ in a stable model category $\cC$ is  \textbf{homotopically compact} if for any family of objects $\{A_i\}_{i \in I}$ the canonical map  
$$\bigoplus_{i\in I}[X,A_i]^{\cC} \lra [X, \coprod_{i \in I}A_i]^{\cC}$$ is an isomorphism in the homotopy category of $\cC$.
\end{defn}

Recall that a homotopy category of a stable model category is triangulated \cite[Definition 7.1.1]{Hovey}. In this setting we can make the following definition after \cite[Definition 2.1.2]{SchwedeShipleyMorita}.

\begin{defn}\label{compactobj}
Let $\cC$ be a triangulated category with infinite coproducts. A full triangulated subcategory of $\cC$ (with shift and triangles induced from $\cC$) is called \textbf{localising} if it is closed under coproducts in $\cC$. A set $\cP$ of objects of $\cC$ is called a  \textbf{set of generators} if the only localising subcategory of $\cC$ containing objects of $\cP$ is the whole of $\cC$. 
An object of a stable model category is called a generator if it is so when considered as an object of the homotopy category.
\end{defn}

Using \cite[Lemma 2.2.1]{SchwedeShipleyMorita}
it is routine to check that if $K$ consists of
homotopically compact objects
of $\cC$ then $K$ is a set of generators for $K \cell \cC$.
Hence we know a set of generators for each of our cellularisations.

Notice that derived functors of both left and right Quillen equivalences preserve homotopically compact objects.

%%%%%%%%%%%%%%%%%%%%%%%%%%%%%%%%%%%%%%%%%%%%%%%%%%%%%
\subsection{Idempotents, splitting and reductions}\label{ideBurnside}

By the results of tom Dieck \cite[5.6.4, 5.9.13]{tomDieck} there is an isomorphism of rings $$\A(SO(3))_\bQ =C(\cF(SO(3))/SO(3),\bQ).$$  Here $\A(SO(3))_\bQ$ is the rational Burnside ring for $SO(3)$ and $C(\cF(SO(3))/SO(3),\bQ)$ denotes the ring of continuous functions on the orbit space $\cF(SO(3))/SO(3)$ with values in discrete space $\bQ$. 

Thus it is clear that idempotents of the rational Burnside ring of $SO(3)$ correspond to the characteristic functions on subspaces of the orbit space  $\cF(SO(3))/SO(3)$ discussed in Section \ref{subgroups} which are both open and closed.  

In this paper we use the following idempotents in the rational Burnside ring of $SO(3)$: $e_{\cT}$ corresponding to the characteristic function of the toral part $\cT$, i.e. the conjugacy class of the torus $T$, $e_{\cD}$ corresponding to the characteristic function of the dihedral part $\cD$ and $e_{\cE}$ corresponding to the characteristic function of the exceptional part $\cE$. Since $\cE$ is a disjoint union of 5 points, it is in fact a sum of 5 idempotents, one for every (conjugacy class of a) subgroup in the exceptional part: $e_{SO(3)}$, $e_{\Sigma_4}$, $e_{A_4}$, $e_{A_5}$ and $e_{D_4}$. We use a simplified notation $e_H$ to mean $e_{(H)_{SO(3)}}$ here.
 
Analogously, we will use the notation $e_{\tcT}$ for the idempotent in the rational Burnside ring of $O(2)$ corresponding to the toral part $\tcT$ and $e_{\tcD}$ for the idempotent corresponding to the dihedral part $\tcD$ of $O(2)$.

For an idempotent $e \in \A(SO(3))_\bQ$ and a rational sphere spectrum $S_\bQ$ (see \cite[Section 5]{BarnesFiniteG} for construction) we define $eS_\bQ$ to be the homotopy colimit (a mapping telescope) of the diagram 
\[
\xymatrix{S_\bQ \ar[r]^{e} & S_\bQ\ar[r]^{e} & S_\bQ\ar[r]^{e} &...\ .}
\]
We ask for this spectrum to be cofibrant either by choosing a good construction of homotopy colimit, or by cofibrantly replacing the result in the stable model structure for $SO(3)$--spectra. Now, by \cite[Chapter IV, Theorem 6.3]{MandellMay} (see also Theorem \ref{thm:LBLatE_MM}) the following left Bousfield localisations exist: $L_{e_{\cT}S_\bQ}(SO(3)\endash\mathrm{Sp})$, $L_{e_{\cD}S_\bQ}(SO(3)\endash\mathrm{Sp})$, $L_{e_{\cE}S_\bQ}(SO(3)\endash\mathrm{Sp})$. Also,   $L_{e_{H}S_\bQ}(SO(3)\endash\mathrm{Sp})$ exists for any exceptional subgroup $H \in \cE$.

The first step on the way towards an algebraic model for rational $SO(3)$--spectra is to split this category using above idempotents of the Burnside ring $\A(SO(3))_\bQ$. By 
\cite[Theorem 4.4]{Barnes_Splitting} we get the following decomposition.

\begin{prop}\label{prop:splittingSO(3)}The following adjunction
\[
\xymatrix@R=2pc{
 SO(3) \endash\mathrm{Sp}_\bQ\ \ar@<-1ex>[d]_{\triangle} \\ L_{e_{\cT}S_\bQ}(SO(3)\endash\mathrm{Sp})\times L_{e_{\cD}S_\bQ}(SO(3)\endash\mathrm{Sp})\times L_{e_{\cE}S_\bQ}(SO(3)\endash\mathrm{Sp}) \ar@<-0.5ex>[u]_{\Pi}
}
\]
is a strong monoidal Quillen equivalence, where $SO(3) \endash\mathrm{Sp}_{\bQ}$ denotes the category of rational $SO(3)$ orthogonal spectra, the left adjoint is the diagonal functor and the right adjoint is the product.
\end{prop}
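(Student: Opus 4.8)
The plan is to deduce Proposition \ref{prop:splittingSO(3)} as an instance of the general splitting machinery of \cite[Theorem 4.4]{Barnes_Splitting}, so the work consists mainly of checking that the hypotheses of that theorem are met for $G = SO(3)$ with the chosen idempotents $e_{\cT}, e_{\cD}, e_{\cE}$. First I would recall from Section \ref{subgroups} and the isomorphism $\A(SO(3))_\bQ \cong C(\cF(SO(3))/SO(3),\bQ)$ of tom Dieck that $\cF(SO(3))/SO(3)$ decomposes, as a topological space, into the three clopen pieces $\cT \sqcup \cD \sqcup \cE$: the single point $T$, the convergent sequence $\{D_{2n}\}_{n>2}$ together with its limit $O(2)$, and the five isolated points of $\cE$. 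Each of these is both open and closed, so the corresponding characteristic functions are genuine idempotents of $\A(SO(3))_\bQ$, and they are orthogonal with $e_{\cT} + e_{\cD} + e_{\cE} = 1$. This is exactly the combinatorial input that \cite[Theorem 4.4]{Barnes_Splitting} requires: a complete orthogonal decomposition of the unit of the rational Burnside ring.

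Next I would invoke \cite[Theorem 4.4]{Barnes_Splitting} directly. That result states that whenever $1 = e_1 + \dots + e_n$ is a decomposition of the unit of $\A(G)_\bQ$ into orthogonal idempotents, the diagonal--product adjunction
\[
\triangle : G\endash\mathrm{Sp}_\bQ \;\rightleftarrows\; \prod_{i=1}^{n} L_{e_i S_\bQ}(G\endash\mathrm{Sp}) : \Pi
\]
is a Quillen equivalence, and moreover a strong monoidal one, since each left Bousfield localisation at an idempotent-built spectrum $e_i S_\bQ$ is a smashing localisation and smashing localisations interact well with the monoidal structure. Applying this with $n = 3$ and $(e_1,e_2,e_3) = (e_{\cT}, e_{\cD}, e_{\cE})$ yields precisely the asserted adjunction and its properties. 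I would also note here, for the reader's convenience, that the localisations $L_{e_{\cT}S_\bQ}(SO(3)\endash\mathrm{Sp})$, $L_{e_{\cD}S_\bQ}(SO(3)\endash\mathrm{Sp})$, $L_{e_{\cE}S_\bQ}(SO(3)\endash\mathrm{Sp})$ exist by \cite[Chapter IV, Theorem 6.3]{MandellMay} (Theorem \ref{thm:LBLatE_MM}), as already observed before the statement, and that since we are localising at smashing localisations the resulting model structures are monoidal.

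The only genuinely non-formal point to verify is that the idempotents we have named really do correspond to clopen subsets — equivalently, that $\cT$, $\cD$, $\cE$ are each open and closed in $\cF(SO(3))/SO(3)$ with the Hausdorff-metric topology. For $\cE$ this is immediate since it consists of five isolated points (its Weyl-group data, recalled in Section \ref{subgroups}, confirms each of $SO(3), \Sigma_4, A_4, A_5, D_4$ has finite index in its normaliser and no nearby conjugacy classes). For $\cT = \{T\}$ openness follows because a torus admits a neighbourhood in the Hausdorff metric containing no other subgroup of finite index in its normaliser. For $\cD$, the subgroups $D_{2n}$ ($n>2$) form a sequence converging to $O(2) \in \cD$, so $\cD$ is closed; it is open because its complement $\cT \sqcup \cE$ is closed. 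Thus all three characteristic functions lie in $C(\cF(SO(3))/SO(3),\bQ) = \A(SO(3))_\bQ$. I expect this topological bookkeeping to be the main — indeed the only — obstacle, and it is a mild one: once the clopen decomposition is in hand, the proposition is a direct citation of \cite[Theorem 4.4]{Barnes_Splitting}.
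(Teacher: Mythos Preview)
Your proposal is correct and takes essentially the same approach as the paper: the paper's entire proof is the single sentence ``By \cite[Theorem 4.4]{Barnes_Splitting} we get the following decomposition'' placed immediately before the proposition, so you have simply unpacked the hypothesis-checking (orthogonality and completeness of the idempotents, clopenness of $\cT$, $\cD$, $\cE$) that the paper leaves implicit in the surrounding discussion of Section~\ref{subgroups}.
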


The main idea is to relate each of these localised categories to corresponding ones for simpler groups. Thus we recall, that an inclusion $i: H \lra G$ of a subgroup $H$ into a group $G$ induces two adjoint pairs at the level of orthogonal spectra: induction--restriction and restriction--coinduction, see \cite[Section V.2]{MandellMay}.

\[
\xymatrix@C=6pc{
G\endash\mathrm{Sp}\ 
\ar@<-0ex>[r]|-(.5){\ i^*\ }
&
N\endash\mathrm{Sp}\  
\ar@/^1pc/[l]^(.5){F_H(G_+,-)}
\ar@/_1pc/[l]_(.5){G_+\wedge_H -}
}
\]

These are both Quillen pairs with respect to the usual stable model structures on both sides. On the way to obtain an algebraic model for rational $SO(3)$--spectra we will relate both the toral and dihedral parts of this category to the corresponding parts for rational $O(2)$--spectra. The natural choice of adjunction between $SO(3)$--spectra and $O(2)$--spectra would be the induction and restriction functors. However, this turns out not to be a Quillen adjunction between the toral parts, as we discuss below.

\begin{prop}\label{notQErest_ind}Suppose $e_{\cT}$ is the idempotent in $\A(SO(3))_\bQ$ corresponding to the characteristic function of the toral part $\cT$ (i.e. all subconjugates of the maximal torus of $SO(3)$) and $e_{\tcT}$ is the idempotent in $\A(O(2))_\bQ$ corresponding to the characteristic function of the toral part $\tcT$ (i.e. all  subconjugates of the maximal torus of $O(2)$). Then
\[
\xymatrix{
i^\ast\ :\ L_{e_{\cT}S_\bQ}(SO(3)\endash\mathrm{Sp})\ \ar@<-1ex>[r] & L_{e_{\tcT}S_\bQ}(O(2)\endash\mathrm{Sp})\ :\ SO(3)_+\wedge_{O(2)}- \ar@<-0.5ex>[l]
}
\]
is not a Quillen adjunction.
\end{prop}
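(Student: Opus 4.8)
## Proof Proposal

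The plan is to show that the putative adjunction fails to be a Quillen adjunction by exhibiting an acyclic cofibration in the source whose image fails to be a weak equivalence, or — more robustly — by showing that the left adjoint $i^*$ does not preserve the class of weak equivalences of the localised model structure $L_{e_{\cT}S_\bQ}(SO(3)\endash\mathrm{Sp})$. The key observation is a discrepancy between the geometric isotropy allowed on the two sides: the toral part $\cT$ for $SO(3)$ contains the conjugacy class of $C_2$, which is \emph{also} the conjugacy class of $D_2$ in $SO(3)$, whereas on the $O(2)$ side the subgroup $D_2$ sits in the dihedral part $\tcD$, not in the toral part $\tcT$. Restriction along $i\colon O(2)\hookrightarrow SO(3)$ does not see this identification: an $SO(3)$-spectrum built from the orbit $SO(3)/C_2$ is $e_{\cT}S_\bQ$-local, but its restriction to $O(2)$ decomposes into pieces supported at both $T$-subconjugate and $D_2$-subconjugate isotropy, so it is \emph{not} $e_{\tcT}S_\bQ$-local.

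Concretely, first I would recall that since the localisations are smashing (being localisations at idempotents of the Burnside ring), a map $f$ in $L_{e_{\cT}S_\bQ}(SO(3)\endash\mathrm{Sp})$ is a weak equivalence precisely when $e_{\cT}S_\bQ\wedge f$ is a stable equivalence of $SO(3)$-spectra, equivalently when $\pi^H_*(f)\otimes\bQ$ is an isomorphism for every subgroup $H$ whose conjugacy class lies in $\cT$; similarly for $O(2)$ and $\tcT$. The left adjoint $i^*$, being restriction, is computed on homotopy groups by $\pi^K_*(i^*X) = \pi^K_*(X)$ for $K\le O(2)$. Next I would produce a specific map $f\colon X\to Y$ of $SO(3)$-spectra that is an $e_{\cT}S_\bQ$-equivalence but whose restriction $i^*f$ is not an $e_{\tcT}S_\bQ$-equivalence: take $f$ to be a map detecting only $D_2$-isotropy in $SO(3)$ (for instance a suitable map between suspension spectra of $SO(3)$-cells with isotropy exactly $(D_2)_{SO(3)} = (C_2)_{SO(3)}\in\cT$, smashed with $e_{\cT}S_\bQ$), arranged so that $\pi^H_*(f)\otimes\bQ=0$ for all $H$ with $(H)\in\cT$ — hence $f$ is a weak equivalence in the source — while $\pi^{D_2}_*(i^*f)\otimes\bQ \ne 0$ with $D_2\le O(2)$ having $(D_2)_{O(2)}\in\tcD$, so $e_{\tcT}S_\bQ\wedge i^*f$ is nontrivial on that summand and $i^*f$ is not a weak equivalence in the target. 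Since a left Quillen functor must preserve acyclic cofibrations, and every weak equivalence factors as an acyclic cofibration followed by an acyclic fibration (both preserved, respectively sent to weak equivalences, by a left Quillen functor), preservation of all weak equivalences is necessary; its failure rules out $i^*$ being left Quillen.

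The main obstacle is pinning down the explicit witness object and verifying its homotopy groups: one must be careful about the distinction between $\pi^H_*$ for $H$ taken up to conjugacy in $SO(3)$ versus in $O(2)$, and about the fact that $W_{SO(3)}(C_2)$ and $W_{O(2)}(D_2)$ differ, which is exactly what makes the restriction functor ``split'' the toral summand across the $\tcT$/$\tcD$ divide. I expect the cleanest route is not to build the witness by hand but to argue structurally: the composite of $i^*$ with the localisation $L_{e_{\tcT}S_\bQ}$ kills $D_2$-isotropy, so if $i^*$ were left Quillen into $L_{e_{\tcT}S_\bQ}(O(2)\endash\mathrm{Sp})$ then the derived functor would factor through the further localisation that discards the $(C_2)_{SO(3)}$-summand of $\cT$; but that summand is nonzero in $L_{e_{\cT}S_\bQ}(SO(3)\endash\mathrm{Sp})$ (e.g.\ the idempotent piece $e_{C_2}S_\bQ$ is a nontrivial object), contradicting the fact that a left Quillen functor out of a localised model category is determined on, and nontrivial on, each idempotent summand of the Burnside ring present in the source. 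Either way, the upshot is that restriction cannot be left Quillen, which is why the paper instead uses the \emph{restriction--coinduction} adjunction together with the cellularisation principle.
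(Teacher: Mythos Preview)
Your proposal has the roles of the two adjoints reversed, and this is fatal. In the induction--restriction adjunction displayed in the statement, the \emph{left} adjoint is induction $SO(3)_+\wedge_{O(2)}-$ and the \emph{right} adjoint is restriction $i^\ast$; you argue throughout as if $i^\ast$ were the left adjoint. The paper's proof accordingly shows that $SO(3)_+\wedge_{O(2)}-$ fails to preserve acyclic cofibrations, not that $i^\ast$ does.

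Worse, the specific claim you try to establish---that $i^\ast$ fails to send $e_{\cT}S_\bQ$--equivalences to $e_{\tcT}S_\bQ$--equivalences---is actually \emph{false}. Because $D_2$ is $SO(3)$--conjugate to $C_2$, one has $e_{\tcT}\cdot i^\ast(e_{\cT})=e_{\tcT}$ in $\A(O(2))_\bQ$ (this is precisely what is used in the proof of Proposition~\ref{cyclicAdj}), so if $e_{\cT}S_\bQ\wedge f$ is a stable equivalence then $e_{\tcT}S_\bQ\wedge i^\ast f$ is a retract of $i^\ast(e_{\cT}S_\bQ\wedge f)$ and hence also a stable equivalence. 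Your proposed witness cannot work: the $D_2$--isotropy you want $i^\ast f$ to carry lies in $\tcD$, and $e_{\tcT}S_\bQ$ \emph{kills} that summand rather than detecting it. The genuine obstruction runs in the opposite direction: since $i^\ast(e_{\cT})$ is strictly larger than $e_{\tcT}$ (it sees $D_2$), there exist $e_{\tcT}S_\bQ$--acyclic cofibrations in $O(2)$--spectra---for instance coming from an $O(2)$--spectrum supported only at $D_2$---whose induction to $SO(3)$ has nontrivial $C_2$--homotopy and so is not $e_{\cT}S_\bQ$--acyclic. That is the content of the argument the paper cites from \cite[Proposition~4.5]{KedziorekExceptional}.
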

\begin{proof}It is enough to show that $SO(3)_+\wedge_{O(2)}-$ does not preserve acyclic cofibrations.  This argument is the same as the one in \cite[Proposition 4.5]{KedziorekExceptional}, since $D_2$ is conjugate to $C_2$ in $SO(3)$ and thus $i^*(e_{\cT})\neq e_{\tcT}$.  
\end{proof}

Although the adjunction above does not behave well with respect to these model structures, the one with restriction and coinduction does, as is shown in Proposition \ref{cyclicAdj} below.

\begin{prop}\label{dihRightAdj}Suppose $e_{\cD}$ is the idempotent of $\A(SO(3))_\bQ$ corresponding to all dihedral supgroups of order greater than $4$ and all subgroups isomorphic to $O(2)$. Then
\[
\xymatrix{
i^\ast\ :\ L_{e_{\cD}S_\bQ}(SO(3)\endash\mathrm{Sp})\ \ar@<-1ex>[r] & L_{i^*(e_{\cD})S_\bQ}(O(2)\endash\mathrm{Sp})\ :\ SO(3)_+\wedge_{O(2)}- \ar@<-0.5ex>[l]
}
\]
is a Quillen adjunction.
\end{prop}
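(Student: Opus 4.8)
The statement concerns the adjunction $(\,SO(3)_+\wedge_{O(2)}-\ \dashv\ i^\ast\,)$ between the stable model structures on $O(2)$--spectra and $SO(3)$--spectra, localised at $e_{\cD}S_\bQ$ and $i^\ast(e_{\cD})S_\bQ$ respectively. Note the left adjoint here is induction $SO(3)_+\wedge_{O(2)}-$, which goes from $O(2)$--spectra to $SO(3)$--spectra; in the displayed diagram it is drawn on the right only because of the direction of the arrow, but it is still the left member of the pair. So the task is to show that $SO(3)_+\wedge_{O(2)}-$ is a left Quillen functor between these two localised model categories.

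\textbf{Step 1: Reduce to the unlocalised statement plus compatibility of idempotents.} By Theorem \ref{thm:LBLatE_MM} the cofibrations in $L_{e_{\cD}S_\bQ}(SO(3)\endash\mathrm{Sp})$ and in $L_{i^\ast(e_{\cD})S_\bQ}(O(2)\endash\mathrm{Sp})$ are exactly the cofibrations of the underlying stable model structures. Since $SO(3)_+\wedge_{O(2)}-$ is already a left Quillen functor for the stable model structures (it preserves stable cofibrations and stable acyclic cofibrations, being induction along a subgroup inclusion, see \cite[Section V.2]{MandellMay}), it preserves cofibrations in the localised structures automatically. So it remains to check that it sends $i^\ast(e_{\cD})S_\bQ$--equivalences that are cofibrations to $e_{\cD}S_\bQ$--equivalences.

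\textbf{Step 2: Identify the key point — the derived induction respects the idempotent splitting.} The crucial observation is the analogue of what fails in Proposition \ref{notQErest_ind}: there one has $i^\ast(e_{\cT})\neq e_{\tcT}$ because $D_2$ is conjugate to $C_2$ in $SO(3)$. For the dihedral idempotent the situation is benign: the dihedral part $\cD$ of $\cF(SO(3))/SO(3)$ meets no new coincidences under restriction, and by design $L_{i^\ast(e_{\cD})S_\bQ}(O(2)\endash\mathrm{Sp})$ is defined using precisely $i^\ast(e_{\cD})$, so there is no mismatch to begin with. The plan is to show that for a (cofibrant) $O(2)$--spectrum $X$ there is a natural equivalence
\[
SO(3)_+\wedge_{O(2)} (i^\ast(e_{\cD})S_\bQ \wedge X) \;\simeq\; e_{\cD}S_\bQ \wedge (SO(3)_+\wedge_{O(2)} X),
\]
i.e. the left derived functor of induction carries the $i^\ast(e_{\cD})$--localisation into the $e_{\cD}$--localisation. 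Given this, if $f$ is a stable cofibration and an $i^\ast(e_{\cD})S_\bQ$--equivalence, then smashing with $i^\ast(e_{\cD})S_\bQ$ makes it a stable equivalence; applying induction (a left Quillen functor for the stable structures, hence preserving stable equivalences between cofibrant objects — or, if one prefers, using that induction preserves stable acyclic cofibrations and factoring) shows $SO(3)_+\wedge_{O(2)} f$ becomes a stable equivalence after smashing with $e_{\cD}S_\bQ$, which is exactly the statement that it is an $e_{\cD}S_\bQ$--equivalence.

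\textbf{Step 3: Prove the idempotent compatibility.} To establish the displayed equivalence, I would use the projection formula / Frobenius reciprocity for the subgroup inclusion $O(2)\hookrightarrow SO(3)$: for a $G$--spectrum $A$ and an $O(2)$--spectrum $X$ one has $SO(3)_+\wedge_{O(2)}(i^\ast A \wedge X) \simeq A \wedge (SO(3)_+\wedge_{O(2)} X)$ naturally, at the level of derived functors. Apply this with $A = e_{\cD}S_\bQ$, and then identify $i^\ast(e_{\cD}S_\bQ) \simeq i^\ast(e_{\cD})S_\bQ$: restriction commutes with the mapping telescope defining $eS_\bQ$ (it is a left adjoint, hence preserves homotopy colimits) and carries the map $e_{\cD}\colon S_\bQ\to S_\bQ$ to the map $i^\ast(e_{\cD})\colon S_\bQ\to S_\bQ$, since $i^\ast$ on Burnside rings is a ring map. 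This gives precisely the compatibility. Alternatively, and more elementarily matching the style of the paper: since the localisations are smashing, it suffices to verify on the $0$--sphere, i.e. that $i^\ast(e_{\cD})\cdot S_\bQ$ maps under induction to something $e_{\cD}$--locally equivalent to $e_{\cD}S_\bQ$, which is a statement about geometric isotropy — induction from $O(2)$ sends a spectrum with geometric isotropy among the dihedral subgroups of $O(2)$ (as subgroups of $SO(3)$) to one with geometric isotropy among their $SO(3)$--conjugates, all of which lie in $\cD$, and conversely.

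\textbf{Main obstacle.} The genuinely substantive point is Step 3 — verifying that under restriction the $SO(3)$--dihedral idempotent matches the defining idempotent of the localised $O(2)$--category, and hence that derived induction respects the splitting. This is really a bookkeeping statement about which subgroups of $O(2)$ become which conjugacy classes of subgroups of $SO(3)$ and about the behaviour of geometric fixed points under induction; once it is in place, the Quillen-adjunction claim is formal from Theorem \ref{thm:LBLatE_MM} and the fact that induction is already left Quillen for the stable model structures. One should also be mildly careful that $e_{\cD}S_\bQ$ is cofibrant (guaranteed by the construction in Section \ref{ideBurnside}) so that smashing with it is homotopically meaningful without further fibrant/cofibrant replacement.
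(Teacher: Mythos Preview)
Your proposal is correct and follows essentially the same approach as the paper: the paper's proof is a terse two-line argument noting that induction is left Quillen before localisation (hence preserves cofibrations) and then invoking the projection formula $(SO(3)_+\wedge_{O(2)} X)\wedge e_{\cD}S_\bQ \cong SO(3)_+\wedge_{O(2)}(X\wedge i^\ast(e_{\cD}S_\bQ))$ to conclude that acyclic cofibrations are preserved. Your Steps 1--3 unpack exactly this reasoning in more detail; the geometric-isotropy alternative in Step 3 is extra but not needed.
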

\begin{proof}The proof follows the same pattern as the proof of \cite[Proposition 4.4]{KedziorekExceptional}. It was a Quillen adjunction before localisation by \cite[Chapter V, Proposition 2.3]{MandellMay} so the left adjoint preserves cofibrations. It preserves acyclic cofibrations as $SO(3)_+\wedge_{O(2)}-$ preserved acyclic cofibrations before localisation and we have a natural (in $O(2)$--spectra $X$) isomorphism: 
$$(SO(3)_+\wedge_{O(2)} X) \wedge e_{\cD}S_\bQ \cong SO(3)_+ \wedge_{O(2)} (X \wedge i^*(e_{\cD}S_\bQ))$$  \end{proof}

It turns out that the other adjunction -- restriction and coinduction adjunction -- gives a Quillen pair under general conditions on localisations
\begin{lem}\label{QuillenAdjForIandH}\cite[Lemma 4.6]{KedziorekExceptional} Suppose $G$ is any compact Lie group, $i: H \lra G$ is an inclusion of a subgroup and $V$ is an open and closed set in $\cF(G)/G$. Then the adjunction 
\[
\xymatrix{
i^\ast\ :\ L_{e_VS_\bQ}(G\endash\mathrm{Sp})\ \ar@<+1ex>[r] & L_{i^\ast (e_{V})S_\bQ}(H\endash\mathrm{Sp})\ :\ F_H(G_+,-) \ar@<+0.5ex>[l]
}
\]
is a Quillen pair. We use notation $e_V$ here for the idempotent corresponding to the characteristic function on $V$.
\end{lem}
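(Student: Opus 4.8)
The plan is to check the two defining properties of a Quillen pair for the left adjoint $i^\ast$: that it preserves cofibrations, and that it preserves acyclic cofibrations. The starting point is the fact, recalled just above in the excerpt from \cite[Chapter V, Proposition 2.3]{MandellMay}, that the restriction--coinduction adjunction
\[
\xymatrix@C=4pc{
G\endash\mathrm{Sp}\ \ar@<+1ex>[r]^{i^\ast} & H\endash\mathrm{Sp}\ \ar@<+1ex>[l]^{F_H(G_+,-)}
}
\]
is already a Quillen pair for the (unlocalised) stable model structures on both sides, so $i^\ast$ preserves cofibrations and acyclic cofibrations before localisation. Since left Bousfield localisation at an object changes neither the cofibrations nor the underlying category, $i^\ast$ still preserves cofibrations after passing to $L_{e_VS_\bQ}(G\endash\mathrm{Sp})$ and $L_{i^\ast(e_V)S_\bQ}(H\endash\mathrm{Sp})$. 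That disposes of the first condition for free.

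The substantive point is the second condition: that $i^\ast$ sends acyclic cofibrations of $L_{e_VS_\bQ}(G\endash\mathrm{Sp})$ to acyclic cofibrations of $L_{i^\ast(e_V)S_\bQ}(H\endash\mathrm{Sp})$. An acyclic cofibration in the localised $G$--category is a cofibration $f$ which is an $e_VS_\bQ$--equivalence, i.e. $\mathrm{Id}\wedge f : e_VS_\bQ \wedge X \lra e_VS_\bQ \wedge Y$ is a stable weak equivalence of $G$--spectra. We already know $i^\ast f$ is a cofibration of $H$--spectra, so we only need that it is an $i^\ast(e_V)S_\bQ$--equivalence, i.e. that $i^\ast(e_VS_\bQ) \wedge i^\ast X \lra i^\ast(e_VS_\bQ)\wedge i^\ast Y$ is a stable equivalence of $H$--spectra. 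The key step is the monoidal compatibility of restriction: there is a natural isomorphism
\[
i^\ast\big(e_VS_\bQ \wedge X\big)\ \cong\ i^\ast(e_VS_\bQ)\wedge i^\ast X
\]
(restriction along a subgroup inclusion is strong symmetric monoidal), and up to the cofibrant-replacement conventions used to define $e_VS_\bQ$ and $i^\ast(e_V)S_\bQ$, the restricted spectrum $i^\ast(e_VS_\bQ)$ agrees with $i^\ast(e_V)S_\bQ$ as an object to smash with; here one uses that $i^\ast$ carries the idempotent $e_V$ to $i^\ast(e_V)$ by definition, so it carries the mapping-telescope defining $e_VS_\bQ$ to the one defining $i^\ast(e_V)S_\bQ$ up to stable equivalence. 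Combining these, $i^\ast$ of the map $\mathrm{Id}\wedge f$ is identified with $\mathrm{Id}_{i^\ast(e_V)S_\bQ}\wedge i^\ast f$; but $i^\ast$ (being left Quillen for the stable structures, in particular preserving stable equivalences between cofibrant objects, or more simply because $i^\ast$ preserves all stable equivalences as it is also a right adjoint) sends the stable equivalence $\mathrm{Id}\wedge f$ to a stable equivalence, so $\mathrm{Id}\wedge i^\ast f$ is a stable equivalence of $H$--spectra, which is exactly what it means for $i^\ast f$ to be an $i^\ast(e_V)S_\bQ$--equivalence.

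I expect the main obstacle to be the bookkeeping around cofibrancy and the precise meaning of ``$i^\ast(e_VS_\bQ) = i^\ast(e_V)S_\bQ$'': one has to be a little careful that the strong monoidal isomorphism above interacts correctly with the (possibly cofibrantly-replaced) mapping telescopes used to construct the localising objects on each side, and that smashing with these objects detects the same weak equivalences as smashing with their cofibrant replacements. This is routine given that $i^\ast$ commutes with homotopy colimits and preserves cofibrant objects, and that the relevant localisations are smashing, but it is the place where an honest argument spends its effort. Once that identification is in hand, the proof is essentially the same formal argument as in \cite[Lemma 4.6]{KedziorekExceptional} and as in the proof of Proposition \ref{dihRightAdj} above, just with $e_{\cD}$ replaced by the general $e_V$.
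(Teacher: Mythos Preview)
The paper does not actually give its own proof of this lemma: it is stated with a citation to \cite[Lemma 4.6]{KedziorekExceptional} and used as a black box. Your argument is correct and is exactly the standard one (and the one in the cited reference): cofibrations are unchanged by left Bousfield localisation, so only preservation of the new acyclic cofibrations is at issue, and that follows from the strong monoidality of $i^\ast$ together with the fact that $i^\ast$ preserves all stable weak equivalences. This is precisely the restriction--coinduction analogue of the proof of Proposition~\ref{dihRightAdj}, with the projection formula for induction replaced by the monoidal isomorphism $i^\ast(A\wedge X)\cong i^\ast A\wedge i^\ast X$; your closing paragraph already identifies this parallel, and your caveats about cofibrancy of the telescopes are the right ones but, as you say, routine.
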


In the next sections we will repeatedly use the above lemma, mainly in situations where after a further localisation of the right hand side we get a Quillen equivalence. To prepare for that, we distinguish the following two cases.

\begin{cor}\label{localisedQAdjunctions}
Let $\cD$ denote the dihedral part of $SO(3)$ and $e_{\cD}$ the corresponding idempotent. Let $i: O(2) \lra SO(3)$ be an inclusion. Then 
\[
\xymatrix{
i^\ast\ :\ L_{e_{\cD}S_\bQ}(SO(3)\endash\mathrm{Sp})\ \ar@<+1ex>[r] & L_{i^*(e_{\cD})S_\bQ}(O(2)\endash\mathrm{Sp})\ :\ F_{O(2)}(SO(3)_+,-) \ar@<+0.5ex>[l]
}
\]
is a Quillen adjunction. 
\end{cor}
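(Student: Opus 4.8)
The plan is to deduce Corollary \ref{localisedQAdjunctions} directly from Lemma \ref{QuillenAdjForIandH}. First I would observe that the dihedral part $\cD$ is an open and closed subset of $\cF(SO(3))/SO(3)$: indeed, from the description in Section \ref{subgroups}, the points of $\cD$ (the dihedral subgroups $D_{2n}$ for $n>2$ together with $O(2)$) form a sequence converging to $O(2)$, so $\cD$ is closed; and since $\cE$ is discrete and $\cT$ is an isolated point, the complement $\cT \sqcup \cE$ is also closed, so $\cD$ is open as well. Hence $e_{\cD}$ is exactly the idempotent $e_V$ of Lemma \ref{QuillenAdjForIandH} associated to the open-closed set $V = \cD$.

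Next I would apply Lemma \ref{QuillenAdjForIandH} with $G = SO(3)$, the subgroup inclusion $i : O(2) \lra SO(3)$, and $V = \cD$. The lemma then immediately yields that
\[
\xymatrix{
i^\ast\ :\ L_{e_{\cD}S_\bQ}(SO(3)\endash\mathrm{Sp})\ \ar@<+1ex>[r] & L_{i^\ast(e_{\cD})S_\bQ}(O(2)\endash\mathrm{Sp})\ :\ F_{O(2)}(SO(3)_+,-) \ar@<+0.5ex>[l]
}
\]
is a Quillen pair, which is precisely the assertion. In particular one does not need to identify $i^\ast(e_{\cD})$ with any specific idempotent of $\A(O(2))_\bQ$ for this statement; the localisation on the right is simply taken at the restricted object $i^\ast(e_{\cD})S_\bQ$, and the naturality isomorphism $F_{O(2)}(SO(3)_+, -) \wedge e_{\cD}S_\bQ \cong F_{O(2)}(SO(3)_+, - \wedge i^\ast(e_{\cD}S_\bQ))$ built into the proof of the lemma does the rest.

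There is essentially no obstacle here: the content is entirely contained in Lemma \ref{QuillenAdjForIandH}, and the only thing to check is the topological fact that $\cD \subseteq \cF(SO(3))/SO(3)$ is clopen, which follows from the explicit picture of the orbit space given earlier. The one point worth a sentence of care is that $F_{O(2)}(SO(3)_+,-)$ is the \emph{right} adjoint of the restriction $i^\ast$ (the restriction--coinduction pair), as opposed to the induction--restriction pair of Proposition \ref{notQErest_ind}; this is what makes the adjunction survive localisation, in contrast with the negative result of that proposition. So the proof is a two-line invocation of the lemma once the clopenness of $\cD$ is recorded.
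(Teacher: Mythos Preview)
Your approach is correct and is exactly how the paper treats the statement: it is listed as a corollary of Lemma~\ref{QuillenAdjForIandH} with no separate proof, so all that is needed is the observation that $\cD$ is clopen in $\cF(SO(3))/SO(3)$, which you verify.

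One inaccuracy in your write-up is worth flagging, even though it does not affect the argument. The isomorphism you attribute to the proof of the lemma,
\[
F_{O(2)}(SO(3)_+,-)\wedge e_{\cD}S_\bQ \;\cong\; F_{O(2)}\bigl(SO(3)_+,\,-\wedge i^\ast(e_{\cD}S_\bQ)\bigr),
\]
is not valid in general: coinduction does not commute with smashing against a fixed object in this way. The mechanism inside Lemma~\ref{QuillenAdjForIandH} uses instead that the \emph{left} adjoint $i^\ast$ is strong monoidal, so that $i^\ast(f)\wedge i^\ast(e_{\cD}S_\bQ)\cong i^\ast\bigl(f\wedge e_{\cD}S_\bQ\bigr)$; this is what guarantees that $i^\ast$ sends $e_{\cD}S_\bQ$--acyclic cofibrations to $i^\ast(e_{\cD})S_\bQ$--acyclic cofibrations. (Compare the analogous projection formula for the induction functor used in the proof of Proposition~\ref{dihRightAdj}.) Since you only invoke the lemma as a black box, your proof stands; just remove or correct that parenthetical.
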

\begin{rmk}\label{rmk:different_localisations} Note that the idempotent on the right hand side $i^*(e_{\cD})$ corresponds to the dihedral part of $O(2)$ excluding all subgroups $D_2$ and $D_4$. Thus $i^*(e_{\cD}) = i^*(e_{\cD}) e_{\tcD}$. 
\end{rmk}

\begin{prop}\label{cyclicAdj}Let $i: O(2) \lra SO(3)$ be an inclusion. Then the following adjunction
\[
\xymatrix{
i^\ast \ :\ L_{e_{\cT}S_\bQ}(SO(3)\endash\mathrm{Sp})\  \ar@<+1ex>[r] & \ L_{e_{\tcT}S_\bQ}(O(2)\endash\mathrm{Sp}) \ :\ F_{O(2)}(SO(3)_+,-) \ar@<+0.5ex>[l]
}
\]
is a strong monoidal Quillen adjunction, where the idempotent on the right hand side corresponds to the family of all subgroups of $O(2)$ subconjugate to a maximal torus $SO(2)$ in $O(2)$.
\end{prop}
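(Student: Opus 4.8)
The plan is to show that the restriction--coinduction adjunction
$i^\ast \dashv F_{O(2)}(SO(3)_+,-)$ becomes a Quillen adjunction after localising both sides at the appropriate toral idempotents, and that both functors are (lax, resp.\ strong) monoidal. The Quillen adjunction part is essentially an instance of Lemma \ref{QuillenAdjForIandH} applied to the inclusion $i\colon O(2)\lra SO(3)$ and the open-closed subset $\cT\subseteq \cF(SO(3))/SO(3)$; the only thing to check is that the idempotent it produces on the $O(2)$-side, namely $i^\ast(e_{\cT})$, really is the idempotent $e_{\tcT}$ corresponding to the toral part $\tcT$ of $O(2)$. This is the crux. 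One computes $i^\ast(e_{\cT})$ via the isomorphism $\A(SO(3))_\bQ = C(\cF(SO(3))/SO(3),\bQ)$ (tom Dieck) and its analogue for $O(2)$: restriction along $i$ corresponds to the map on spaces of conjugacy classes $\cF(O(2))/O(2)\lra \cF(SO(3))/SO(3)$ sending a conjugacy class of subgroups of $O(2)$ to its $SO(3)$-conjugacy class, and $i^\ast(e_{\cT})$ is the characteristic function of the preimage of $\cT=\{(T)_{SO(3)}\}$. Since the only subgroups of $O(2)$ that are subconjugate in $SO(3)$ to the maximal torus $T$ are the cyclic rotation subgroups and $T$ itself --- in particular $D_2\le O(2)$, which is conjugate to $C_2$ in $SO(3)$, lands in $\cT$ --- the preimage is exactly $\tcT$, so $i^\ast(e_{\cT}) = e_{\tcT}$. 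Hence Lemma \ref{QuillenAdjForIandH} gives the Quillen adjunction verbatim.

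Second, I would address the monoidal statement. The localised category $L_{e_{\tcT}S_\bQ}(O(2)\endash\mathrm{Sp})$ is a symmetric monoidal model category with unit the localised sphere $e_{\tcT}S_\bQ$ (smashing localisation), and likewise for $SO(3)$. The functor $i^\ast$ is strong symmetric monoidal already at the level of orthogonal spectra, since restriction along a group homomorphism commutes with smash products and sends spheres to spheres; it descends to the localisations because, as $i^\ast(e_{\cT})=e_{\tcT}$, it sends $e_{\cT}S_\bQ$ to (a model of) $e_{\tcT}S_\bQ$ and sends $e_{\cT}S_\bQ$-equivalences to $e_{\tcT}S_\bQ$-equivalences. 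Then the right adjoint $F_{O(2)}(SO(3)_+,-)$ acquires a canonical lax monoidal structure as the right adjoint of a strong monoidal left adjoint (doctrinal adjunction), and the pair is a strong monoidal Quillen adjunction in the sense that the left adjoint is strong monoidal, is a left Quillen functor, and the unit map is a weak equivalence (here an isomorphism) --- this last point is where one uses $i^\ast(e_{\cT}S_\bQ)\cong e_{\tcT}S_\bQ$ again, so that $i^\ast$ of the monoidal unit is the monoidal unit.

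The step I expect to be the main obstacle is the identification $i^\ast(e_{\cT}) = e_{\tcT}$, or rather pinning down precisely which family of subgroups of $O(2)$ the right-hand idempotent corresponds to --- this is exactly the subtlety flagged in Section \ref{subgroups}, that the toral part of $SO(3)$ ``contains information about $D_2$'' whereas the toral part of $O(2)$ naively does not. The point is that $\tcT$ as defined for $O(2)$ already is ``all subconjugates of the maximal torus of $O(2)$'', and $D_2\le O(2)$ is a subgroup of the maximal torus $SO(2)\le O(2)$? No: $D_2$ is \emph{not} contained in $SO(2)$, it contains a reflection. So care is needed: the family on the $O(2)$-side consists of subgroups of $O(2)$ that become subconjugate to $T$ in $SO(3)$, and one must verify this coincides with ``subconjugate to $SO(2)$ inside $O(2)$'', i.e.\ that $D_2$ does \emph{not} lie in it --- consistent with Remark \ref{rmk:different_localisations}-style bookkeeping and with the contrast to Proposition \ref{notQErest_ind}. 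Once the combinatorics of conjugacy classes is handled cleanly, the rest is a formal application of Lemma \ref{QuillenAdjForIandH} and standard doctrinal-adjunction arguments.
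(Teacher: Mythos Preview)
Your core claim that $i^\ast(e_{\cT}) = e_{\tcT}$ is false, and this is precisely where your argument breaks down. You correctly observe that $D_2\le O(2)$ is conjugate to $C_2$ in $SO(3)$ and therefore lies in the preimage of $\cT$ under the map $\cF(O(2))/O(2)\to\cF(SO(3))/SO(3)$. But $D_2$ is \emph{not} in $\tcT$: it contains a reflection and is not $O(2)$-conjugate to any subgroup of $SO(2)$. Hence the preimage of $\cT$ is strictly larger than $\tcT$ --- it is $\tcT\cup\{(D_2)_{O(2)}\}$ --- and $i^\ast(e_{\cT})\ne e_{\tcT}$. Indeed this inequality is exactly what is used in the proof of Proposition~\ref{notQErest_ind}, so your hope in the final paragraph that ``$D_2$ does \emph{not} lie in it'' is doomed; your own earlier sentence already established that it does.

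The paper's proof does not attempt to identify the two idempotents. Instead it factors the desired adjunction as a composite: first apply Lemma~\ref{QuillenAdjForIandH} to obtain a Quillen pair between $L_{e_{\cT}S_\bQ}(SO(3)\endash\mathrm{Sp})$ and $L_{i^\ast(e_{\cT})S_\bQ}(O(2)\endash\mathrm{Sp})$, and then compose with the identity adjunction to the further localisation $L_{e_{\tcT}S_\bQ}(O(2)\endash\mathrm{Sp})$. The second step is a Quillen pair because $e_{\tcT}\, i^\ast(e_{\cT}) = e_{\tcT}$, i.e.\ the $e_{\tcT}$-localisation is a further localisation of the $i^\ast(e_{\cT})$-localisation. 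So the fix is not to sharpen your computation of $i^\ast(e_{\cT})$ but to replace equality by the divisibility relation $e_{\tcT}\le i^\ast(e_{\cT})$ and insert the extra identity-adjunction step. Your treatment of the monoidal structure via strong monoidality of $i^\ast$ and doctrinal adjunction is fine once this is repaired.
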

\begin{proof}This follows from Lemma \ref{QuillenAdjForIandH} and the composition of Quillen adjunctions:
{\small \[
\xymatrix@C=3pc{
L_{e_{\cT}S_\bQ}(SO(3)\endash\mathrm{Sp})\ 
\ar@<+1ex>[r]^{i^\ast}
&
\ L_{i^\ast(e_{\cT}) S_{\bQ}}(O(2)\endash\mathrm{Sp})\ 
\ar@<+0.5ex>[l]^{F_{O(2)}(SO(3)_+,-)}
\ar@<+1ex>[r]^(.54){\Id}
&
\ L_{e_{\tcT}S_\bQ}(O(2)\endash\mathrm{Sp})
\ar@<+0.5ex>[l]^(.46){\Id}
}
\]}
Note that $i^\ast(e_{\cT} S_{\bQ})$ has non-trivial geometric fixed points not only for all cyclic subgroups of $O(2)$ and $SO(2)$, but also for $D_2$, as $D_2$ is conjugate to $C_2$ in $SO(3)$. To ignore that and take into account only toral part we use the fact that $e_{\tcT} i^*(e_\cT)=e_{\tcT}$, which implies that the identity adjunction above is a Quillen pair.
\end{proof}

%%%%%%%%%%%%%%%%%%%%%%%%%%%%%%%%%%%%%%%%%%%%%
\section{The toral part}\label{cyclicPart}

In this section we use results from \cite{BGKS} and \cite{BarnesO(2)new} to obtain an algebraic model for the toral part of rational $SO(3)$--spectra. The first paper establishes a zig-zag of symmetric monoidal Quillen equivalences between rational $SO(2)$--spectra, while the second one lifts this comparison to one compatible with the $W=O(2)/SO(2)$--action to obtain an algebraic model for the toral part of rational $O(2)$--spectra. 
 
We begin by describing the category $d\cA(O(2),\tcT)$ in Section \ref{A(G,c)} and $d\cA(SO(3),\cT)$ in Section \ref{The category A(SO(3),c)}. Then we proceed to establishing the comparison between the toral part of rational $SO(3)$--orthogonal spectra and its algebraic model, $d\cA(SO(3),\cT)$.

\subsection{Categories $\cA(O(2),\tcT)$ and $d\cA(O(2),\tcT)$}\label{A(G,c)}\label{subsectionA(O(2),c)}
Before we are ready to describe the category  $\cA(SO(3),\cT)$ we have to introduce the category $\cA(O(2),\tcT)$. We give a short description of $\cA(O(2),\tcT)$ as a category on the objects of $\cA(SO(2))$ with $W$--action. Recall that $W=O(2)/SO(2)$ is the group of order 2.

Material in this section is based on \cite{GreenleesS1} and \cite[Section 3]{BarnesO(2)new}. 

\begin{defn}\label{euler_classes1}Let $\cF$ denote the family of all finite cyclic subgroups in $O(2)$. Then we define a ring in graded $\bQ[W]$--modules $$\cO_{\cF}:=\prod_{H\in \cF}\bQ[c_H]$$ where each $c_H$ has degree $-2$ and $w$ (the non-trivial element of $W$) acts on each $c_H$ by $-1$. For simplicity we set $c:=c_1$.

We use the notation $\cE^{-1}\cO_\cF$ for the following colimit of localisations:
$$\colim_{k}\cO_{\cF}[c^{-1}, c_{C_2}^{-1},...,c_{C_{k}}^{-1}]$$
where the maps in the colimit are the inclusions. $\cE^{-1}\cO_\cF$ is an $\cO_\cF$--module using the inclusion $$\cO_\cF \lra \cE^{-1}\cO_\cF.$$
\end{defn}
Notice that we can perform a similar construction on the ring $\widetilde{\cO_{\cF}}:=(1-e_1)\cO_{\cF}$ and call it $\widetilde{\cE^{-1}\cO_{\cF}}$, where $e_1$ is the projection on the first factor in the ring $\cO_{\cF}$. Then another way to define  $\cE^{-1}\cO_\cF$ is as $\bQ[c,c^{-1}]\times \widetilde{\cE^{-1}\cO_{\cF}}$. This last description of $\cE^{-1}\cO_\cF$ will be useful when we compare this model to the one for toral part of rational $SO(3)$--spectra.

\begin{defn}\label{defn:object_in_AO(2)}
An object of $\cA(O(2),\tcT)$ consists of a triple $(M,V,\beta)$ where $M$ is an $\cO_{\cF}$--module in $\bQ[W]$--modules, $V$ is a graded rational vector space with a $W$--action and $\beta$ is a map of $\cO_{\cF}$--modules (in $\bQ[W]$--modules)
$$\beta : M \lra \cE^{-1}\cO_{\cF}\otimes V$$
 such that \\
($\star$) $\cE^{-1}\cO_\cF\otimes_{\cO_{\cF}}\beta$ is an isomorphism of $\cE^{-1}\cO_{\cF}$--modules in $\bQ[W]$--modules.

A morphism between two such objects $(\alpha, \phi): (M,V,\beta) \lra (M',V',\beta ')$ consists of a map of $\cO_{\cF}$--modules $\alpha : M \lra M'$ and a map of graded $\bQ[W]$--modules such that the relevant square commutes.
\end{defn}
Notice that instead of modules over $\cO_{\cF}$ in $\bQ[W]$--modules we can consider modules over $\cO_{\cF}[W]$ in $\bQ$--modules, where $\cO_{\cF}[W]$ is a group ring with a twisted $W$--action (namely $wc_H=-c_Hw$). We will use this description in the next section. Similarly, $\cE^{-1}\cO_{\cF}[W]$ denotes a group ring with a twisted $W$--action.

\begin{defn}
An object of $d\cA(O(2),\tcT)$ is an object of $\cA(O(2),\tcT)$ equipped with a differential, or in other words it consists of a triple $(M,V,\beta)$ where $M$ is an $\cO_{\cF}$--module in $\Ch(\bQ[W])$, $V$ is an object of $\Ch(\bQ[W])$ and $\beta$ is a map of $\cO_{\cF}$--modules (in $\Ch(\bQ[W])$)
$$\beta : M \lra \cE^{-1}\cO_{\cF}\otimes V$$
 such that \\
 ($\star$) $\cE^{-1}\cO_\cF\otimes_{\cO_{\cF}}\beta$ is an isomorphism of $\cE^{-1}\cO_{\cF}$--modules in $\Ch(\bQ[W])$. \\
 A morphism in this category is a morphism in $\cA(O(2),\tcT)$ which commutes with the differentials. 
\end{defn}

We proceed to discussing the properties of  the category  $d\cA(O(2),\tcT)$.
Firstly, all limits and colimits exist in  $d\cA(O(2),\tcT)$, by an argument analogous to \cite[Definition 2.2.1]{BGKS}.

The existence of a model structure on $d\cA(O(2),\tcT)$ follows from \cite[Appendix A]{GreenleesS1}

\begin{thm} There is a stable, proper model structure on the category  $d\cA(O(2),\tcT)$ where the weak equivalences are the homology isomorphisms. The cofibrations are the injections and the fibrations are defined via the right lifting property. We call this model structure the \textbf{injective model structure}.
\end{thm}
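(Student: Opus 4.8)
The plan is to deduce the existence of this model structure by citing the general machinery of Greenlees' appendix, which constructs injective-type model structures on categories of differential objects in "adelic" algebraic categories built from a ring together with a localisation. First I would check that $d\cA(O(2),\tcT)$ fits the template of \cite[Appendix A]{GreenleesS1}: the category is the category of differential objects $(M,V,\beta)$ where $M$ is a differential graded $\cO_\cF$-module in $\Ch(\bQ[W])$, $V$ is in $\Ch(\bQ[W])$, and $\beta$ satisfies the standard invertibility condition $(\star)$. Since $\bQ[W]$ is a semisimple ring (as $W$ has order $2$ and we are rational), everything is flat, all the relevant algebraic categories are abelian with exact filtered colimits, and the invertibility condition $(\star)$ is preserved under the constructions needed; this is exactly the situation covered by the general result, with the only modification being the presence of the (semisimple) coefficient ring $\bQ[W]$ in place of $\bQ$.

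The key steps, in order, would be: (1) recall from the paragraph before the theorem that all limits and colimits exist in $d\cA(O(2),\tcT)$, by the argument analogous to \cite[Definition 2.2.1]{BGKS}; (2) identify the weak equivalences as homology isomorphisms (of the underlying differential objects) and the cofibrations as the injections (monomorphisms), exactly as in the injective model structure on chain complexes; (3) invoke \cite[Appendix A]{GreenleesS1} to produce the factorisations and lifting properties, defining fibrations by the right lifting property against trivial cofibrations; (4) note that stability follows because the category is already additive with a suspension/shift functor (shifting the differential grading) which is invertible up to homology, and properness follows from the fact that, $\bQ[W]$ being semisimple, monomorphisms and epimorphisms of differential objects are levelwise split, so pushouts of weak equivalences along cofibrations and pullbacks along fibrations preserve weak equivalences.

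The main obstacle I expect is not any single hard computation but rather the bookkeeping needed to verify that the hypotheses of the general existence theorem in \cite[Appendix A]{GreenleesS1} genuinely apply to the triple-with-compatibility-map category $\cA(O(2),\tcT)$ with its $W$-equivariance baked in; in particular one must check that the condition $(\star)$ — that $\cE^{-1}\cO_\cF \otimes_{\cO_\cF}\beta$ is an isomorphism — behaves well under the small object argument, i.e.\ that it is inherited by retracts, transfinite compositions, and pushouts along generating (trivial) cofibrations, so that the factorisations stay inside the subcategory cut out by $(\star)$. This is handled in \cite{GreenleesS1} for the $SO(2)$ case and the passage to $O(2)$ only adds the bookkeeping of a twisted $W$-action, which changes nothing homotopically since $|W|=2$ is invertible. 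Once this is in place, everything else — injectivity as cofibrations, homology isomorphisms as weak equivalences, stability and properness — is formal, and the theorem follows.
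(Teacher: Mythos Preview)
Your overall strategy---invoke \cite[Appendix A]{GreenleesS1} after checking that $\cA(O(2),\tcT)$ fits the template there---is exactly what the paper does; indeed the paper gives no proof beyond the sentence ``The existence of a model structure on $d\cA(O(2),\tcT)$ follows from \cite[Appendix A]{GreenleesS1}'' placed immediately before the theorem. Your steps (1)--(3) are fine and match the paper's intent, and your remark that the only change from the $SO(2)$ case is the (harmless, semisimple) $\bQ[W]$-enrichment is the right perspective.

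There is, however, a genuine slip in your argument for properness. You claim that because $\bQ[W]$ is semisimple, monomorphisms and epimorphisms of differential objects are levelwise split. This is false for the $M$-component: $M$ is an $\cO_\cF$-module (in $\bQ[W]$-modules), and $\cO_\cF$ is far from semisimple---e.g.\ multiplication by $c_H$ gives a non-split monomorphism $\bQ[c_H]\hookrightarrow\bQ[c_H]$. Semisimplicity of $\bQ[W]$ only controls the $W$-action, not the $\cO_\cF$-module structure. The correct arguments are: left properness is automatic because cofibrations are monomorphisms and in an abelian category a pushout along a monomorphism has the same cokernel, hence preserves homology isomorphisms; right properness requires first knowing that fibrations are surjections (so the dual argument applies), and this is obtained by exhibiting enough trivial cofibrations of the form $0\to D^n\otimes P$ to detect surjectivity---exactly as the paper does in the parallel $SO(3)$ case via wide spheres (Proposition~\ref{prop:rightproper_AT}). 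Once you replace the splitting claim with this standard abelian-category reasoning, your proof is complete and agrees with the paper.
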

 The existence of another, monoidal model category structure on $d(\cA(O(2), \tcT))$ was established in \cite{BarnesO(2)new}. However, since we are not considering monoidality of the algebraic model in this paper, the injective model structure on $d\cA(O(2),\tcT)$ is enough for our purposes.

%%%%%%%%%%%%%%%%%%%%%%%%%%%%%%%%%%%%%%%%%%%%%%%%%%%% 
\subsection{Categories $\cA(SO(3),\cT)$ and  $d\cA(SO(3),\cT)$ }\label{The category A(SO(3),c)}
Looking at the toral parts of the spaces of subgroups of $SO(3)$ and $O(2)$ we see that the stabiliser of the trivial subgroup is connected in $SO(3)$, while it is not in $O(2)$. This is a consequence of the fact that the maximal torus is not normal in $SO(3)$ and it is the main ingredient capturing the difference between the algebraic model for the toral part of rational $SO(3)$--spectra and the toral part of rational $O(2)$--spectra.  

Let us denote by $\cF_{SO(3)}$ the family of all finite cyclic subgroups in $SO(3)$. Then we use the simplified notation $\cO_{\oF}:=\cO_{\cF_{SO(3)}}$, by which we mean a graded ring $\bQ[d]\times \prod_{(H)\in \cF_{SO(3)}, H \neq 1}\bQ[c_{(H)}]$ where $d$ is in degree $-4$ and all $c_{(H)}$ are in degree $-2$. The non-trivial element $w \in W$ acts on it by fixing $d$ and sending $c_{(H)}$ to $-c_{(H)}$ for all subgroups $H\in \oF_{SO(3)}$, $H\neq 1$. 

We define the ring $\cO_{\oF}[W]$ as a product of $\bQ[d]$ (with trivial $W$-action) and a group ring $(1-e_1)\cO_{\oF}[W]$ with the twisted $W$--action, that is $wc_{(H)}=-c_{(H)}w$ for $H\in \cF_{SO(3)}$, $H\neq1$. 

Recall that $c$ was the element of the first factor of the ring $\cO_\cF$ (see Definition \ref{euler_classes1}). 
There is an adjunction 
 \[
\xymatrix@C=5pc{\bQ\text{-mod} \ar@<+0.5em>[r]^{\text{Triv}}& \bQ[W]\text{-mod} \ar@<+0.5em>[l]^{(-)^W}}
\]
where $(\bQ[c])^W=\bQ[d]$ (recall that $\bQ[c]$ is the $\bQ[W]$--module with $W$--action given by $wc=-c$).
Thus using for example \cite[Section 2.3]{SchwShipEquiv} we have the adjunction 
 \[
\xymatrix@C=5pc{\bQ[d]\text{-mod in }\bQ\text{-mod} \ar@<+0.5em>[r]^{\bQ[c]\otimes_{\bQ[d]}-}& \bQ[c]\text{-mod in }\bQ[W]\text{-mod} \ar@<+0.5em>[l]^{(-)^W}}
\]

This extends to give the following result.
\begin{prop}\label{prop:firstadjal_mod}There is an adjunction:
\[
\xymatrix{
\cO_{\cF}\otimes_{\cO_{\oF}}- \ :\ \cO_{\oF}[W]\endash\mathrm{mod} \ar@<+1ex>[r] & \cO_{\cF}[W]\endash\mathrm{mod} \ :\ (-)^W\times \mathrm{Id} \ar@<+0.5ex>[l]
}
\]
\end{prop}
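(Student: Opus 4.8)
The plan is to build the adjunction factor-by-factor over the product decomposition of the rings, and then assemble the pieces. Recall that $\cO_{\oF} = \bQ[d] \times (1-e_1)\cO_{\oF}$ and $\cO_{\cF} = \bQ[c] \times (1-e_1)\cO_{\cF}$, and the same splitting persists after adjoining $W$: $\cO_{\oF}[W] = \bQ[d] \times (1-e_1)\cO_{\oF}[W]$ (with $\bQ[d]$ carrying trivial $W$-action) and $\cO_{\cF}[W] = \bQ[c][W] \times (1-e_1)\cO_{\cF}[W]$. So a module over either group ring is equivalent to a pair consisting of a module over the first factor and a module over the complementary factor, and any functor respecting these product decompositions is determined by what it does on each factor.

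First I would treat the first factor. Here the relevant adjunction is exactly the one displayed just before the proposition: extension of scalars $\bQ[c]\otimes_{\bQ[d]}-$ along the inclusion $\bQ[d] = (\bQ[c])^W \hookrightarrow \bQ[c]$, with right adjoint $(-)^W$. That this is an adjunction is the standard ``restriction of scalars has a left adjoint given by extension of scalars'' statement (cf.\ \cite[Section 2.3]{SchwShipEquiv}), once one observes that taking $W$-fixed points of a $\bQ[c][W]$-module is the same as restricting scalars along $\bQ[d]\to\bQ[c][W]$; this identification holds because $W$ has order $2$, we are working rationally, and $(\bQ[c])^W = \bQ[d]$. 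Second, on the complementary factor $(1-e_1)\cO_{\oF}[W] = (1-e_1)\cO_{\cF}[W]$ — these two rings are literally equal, since for $H \neq 1$ the generators $c_{(H)}$ and the $W$-action agree — so the functor is just the identity adjunction $\Id \dashv \Id$ there. Then I would define $\cO_{\cF}\otimes_{\cO_{\oF}} -$ to be $(\bQ[c]\otimes_{\bQ[d]}-)$ on the first factor and the identity on the second, and $(-)^W \times \Id$ to be $(-)^W$ on the first factor and the identity on the second, and conclude the adjunction by combining the two component adjunctions (a product of adjunctions is an adjunction).

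The one point needing a little care — and the place I expect the only real friction — is checking that the left adjoint as just described genuinely is $\cO_{\cF}\otimes_{\cO_{\oF}} -$, i.e.\ that base change of $\cO_{\oF}[W]$-modules along $\cO_{\oF}[W] \to \cO_{\cF}[W]$ splits as $(\bQ[c]\otimes_{\bQ[d]}-)$ on the first factor and the identity on the second. This is a direct consequence of the fact that $\cO_{\cF}[W] = \cO_{\cF}[W]\otimes_{\cO_{\oF}[W]}\cO_{\oF}[W]$ respects the idempotent decomposition: the idempotent $e_1 \in \cO_{\oF}[W]$ maps to the corresponding idempotent in $\cO_{\cF}[W]$, so $e_1\cO_{\cF}[W] = \bQ[c][W]$ is exactly $\bQ[c]\otimes_{\bQ[d]}\bQ[d][W]/(\text{twisting})$ — more precisely, $\bQ[c][W] = \bQ[c]\otimes_{\bQ[d]} e_1\cO_{\oF}[W]$ as $(\bQ[c][W], e_1\cO_{\oF}[W])$-bimodules — while $(1-e_1)\cO_{\cF}[W] = (1-e_1)\cO_{\oF}[W]$. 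Everything else (functoriality, naturality of unit and counit) is formal and inherited from the two building-block adjunctions, so I would not spell it out beyond noting that it extends ``levelwise'' over the product.
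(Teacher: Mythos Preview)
Your proposal is correct and follows the same line the paper sets up: split via the idempotent $e_1$, invoke the adjunction $\bQ[c]\otimes_{\bQ[d]}- \dashv (-)^W$ on the first factor (displayed immediately before the proposition), and use the identity on the complementary factor where the two rings agree. The paper's own proof is the one-line remark that the unit is the identity and the counit is the natural inclusion; your factorwise description simply unpacks this.

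One small correction worth making: the sentence ``taking $W$-fixed points of a $\bQ[c][W]$-module is the same as restricting scalars along $\bQ[d]\to\bQ[c][W]$'' is not accurate. Restriction of scalars along that map returns the entire underlying $\bQ[d]$-module, not its $W$-fixed submodule. The adjunction $\bQ[c]\otimes_{\bQ[d]}- \dashv (-)^W$ is genuine, but it is not an instance of extension--restriction; rather it arises from the composite of the $\mathrm{Triv}\dashv(-)^W$ adjunction between $\bQ$-modules and $\bQ[W]$-modules with the change-of-rings along $\bQ[d]\hookrightarrow\bQ[c]$, as the paper indicates. This slip does not affect your argument, since you are citing an adjunction already established in the text.
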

\begin{proof}The unit of this adjunction is the identity and the counit is the natural inclusion. 
\end{proof}

We can compose the above adjunction with the usual restriction--induction adjunction:
 \[
\xymatrix@C=5pc{\cO_{\cF}[W]\text{-mod} \ar@<+0.5em>[r]^{\cE^{-1}\cO_{\cF}\otimes_{\cO_{\cF}}-}& \cE^{-1}\cO_{\cF}[W]\text{mod} \ar@<+0.5em>[l]^{\text{res}}}
\]
to get the adjunction
\begin{equation}\label{eq:condition}
\xymatrix@C=5pc{\cO_{\oF}[W]\text{-mod} \ar@<+0.5em>[r]^{\cE^{-1}\cO_{\cF}\otimes_{\cO_{\oF}}-}& \cE^{-1}\cO_{\cF}[W]\text{mod} \ar@<+0.5em>[l]^{U}}
\end{equation}
in $\bQ$--modules.

We define the category $\cA(SO(3),\cT)$ as follows

\begin{defn}\label{def:objectA(SO(3),T)}
An object in $\cA(SO(3),\cT)$ consists of a triple $(M,V,\beta)$ where $M$ is an $\cO_{\oF}[W]$--module in $\bQ$--modules, $V$ is a graded rational vector space with a $W$--action and $\beta$ is a map of $\cO_{\oF}[W]$--modules
$$\beta : M \lra U(\cE^{-1}\cO_{\cF}\otimes V)$$
 such that the adjoint using (\ref{eq:condition}),  \\
($\star$) $\cE^{-1}\cO_{\cF} \otimes_{\cO_{\oF}}M \lra \cE^{-1}\cO_{\cF}\otimes V$ is an isomorphism of $\cE^{-1}\cO_{\cF}[W]$--modules. 

A morphism between two such objects $(\alpha, \phi): (M,V,\beta) \lra (M',V',\beta ')$ consists of a map of $\cO_{\oF}[W]$--modules $\alpha : M \lra M'$ and a map of graded $\bQ[W]$--modules such that the relevant square commutes.
\end{defn}
Notice that the condition on the map $\beta$ implies that the image of $e_1M$ must lie in ${( \bQ[c,c^{-1}]\otimes V)^W}$, i.e. in $W$--fixed points. From now on we will abuse the notation slightly and leave out the functor $U$ (\ref{eq:condition}) in the codomain of $\beta$ in $\cA(SO(3),\cT)$.

\begin{rmk}There are no idempotents in the category $\cA(SO(3),\cT)$, however the category of $\cO_{\oF}$--modules can be split, for example as $\bQ[d]\text{--mod}\times(1-e_1)\cO_{\oF}\text{--mod}$. We will use that property in the proof of Proposition \ref{prop:secondadjal_mod}.
\end{rmk}

\begin{defn}
An object of $d\cA(SO(3),\cT)$  consists of an $\cO_{\oF}[W]$--module $M$ equipped with a differential and a chain complex of $\bQ[W]$--modules $V$ together with a map of $\cO_{\oF}[W]$--modules
${\gamma : M \lra \cE^{-1}\cO_{\cF} \otimes V}$ which commutes with differentials. A differential on a $\cO_{\oF}[W]$--module $M$ consists of  maps $d_n: M_n \lra M_{n-1}$ such that $d_{n-1}\circ d_n = 0$ and $\bar{c}d_n =d_{n-2}\bar{c}$ where $\bar{c}$ consists of elements $c_{(H)}$ on the $H$--factor, for all $(H) \in\oF, H\neq 1$ and $0$ on the first factor and $\bar{d}d_n =d_{n-4}\bar{d}$ where $\bar{d}$ is $d$ on the first factor and $0$ everywhere else in the product.

A morphism in this category is a morphism in $\cA(SO(3),\cT)$ which commutes with the differentials. 
\end{defn}

We proceed to study the adjunction relating  $\cA(SO(3),\cT)$ and  $\cA(O(2),\tcT)$.

\begin{prop}\label{prop:secondadjal_mod}
We have the following adjunction, where the adjoints are defined in the proof:
\[
\xymatrix{
F \ :\  \cA(SO(3),\cT)\  \ar@<+1ex>[r] & \ \cA(O(2),\tcT) \ :\ R, \ar@<+0.5ex>[l]
}
\]
 \end{prop}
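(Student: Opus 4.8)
The plan is to build the adjunction by combining the module-level adjunction from Proposition~\ref{prop:firstadjal_mod} (and its localised companion \eqref{eq:condition}) with the obvious bookkeeping on the vector-space data, and then to check that the two functors land in the respective subcategories cut out by condition $(\star)$. I would first dispose of the easy half: the right adjoint $R$. Given a triple $(M,V,\beta)\in\cA(O(2),\tcT)$ — where $M$ is an $\cO_\cF[W]$-module — set $R(M,V,\beta) = ((-)^W\times\mathrm{Id})(M)$ for the module part, keep $V$ unchanged as a graded $\bQ[W]$-module, and take the structure map to be $\beta$ transported along the unit/counit of Proposition~\ref{prop:firstadjal_mod} together with \eqref{eq:condition}. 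The point to verify is that the resulting adjoint structure map $\cE^{-1}\cO_\cF\otimes_{\cO_{\oF}}((-)^W\times\mathrm{Id})(M)\lra \cE^{-1}\cO_\cF\otimes V$ is still an isomorphism of $\cE^{-1}\cO_\cF[W]$-modules; this follows because on the $(1-e_1)$-summand nothing changes (the functor is the identity there) and on the $e_1$-summand the localised map $\bQ[c,c^{-1}]\otimes_{\bQ[d]}(e_1M)^W\to \bQ[c,c^{-1}]\otimes (e_1 V)$ is obtained from the $O(2)$-isomorphism by the exact base change $\bQ[c,c^{-1}]\otimes_{\bQ[d]}-$ and the fact that over $\bQ[c,c^{-1}]$ the $W$-fixed points followed by extension of scalars recovers the original module. (This is exactly the content of the remark following Definition~\ref{def:objectA(SO(3),T)} that the image of $e_1 M$ lies in $W$-fixed points.)

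Next I would construct the left adjoint $F$. On a triple $(M,V,\beta)\in\cA(SO(3),\cT)$, set $F(M,V,\beta)$ to have module part $\cO_\cF\otimes_{\cO_{\oF}}M$ (the left adjoint of Proposition~\ref{prop:firstadjal_mod}), vector-space part $V$ unchanged, and structure map the adjunct $\widehat\beta\colon \cO_\cF\otimes_{\cO_{\oF}}M\to\cE^{-1}\cO_\cF\otimes V$ of $\beta$ under Proposition~\ref{prop:firstadjal_mod} applied to the target. One then checks $(\star)$ holds for $F(M,V,\beta)$ — i.e.\ $\cE^{-1}\cO_\cF\otimes_{\cO_\cF}(\cO_\cF\otimes_{\cO_{\oF}}M)\to \cE^{-1}\cO_\cF\otimes V$ is an isomorphism — but the left-hand side is canonically $\cE^{-1}\cO_\cF\otimes_{\cO_{\oF}}M$, so this condition is literally identical to $(\star)$ for the original $SO(3)$-object. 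Hence $F$ is well-defined. The unit $\id\to RF$ and counit $FR\to\id$ are assembled from those of Proposition~\ref{prop:firstadjal_mod} on the module part and the identity on the vector-space part; the triangle identities reduce to those of Proposition~\ref{prop:firstadjal_mod} together with the trivial identities for $V$, and the hom-set bijection $\cA(O(2),\tcT)(F(M,V,\beta),(M',V',\beta'))\cong \cA(SO(3),\cT)((M,V,\beta),R(M',V',\beta'))$ follows from the module-level adjunction once one observes that a commuting square over $\cE^{-1}\cO_\cF\otimes V'$ corresponds under adjunction to a commuting square over $U(\cE^{-1}\cO_\cF\otimes V')$, since the second component $\phi\colon V\to V'$ is unconstrained by the adjunction and simply passes through.

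I expect the main obstacle to be the compatibility of the structure maps with condition $(\star)$ — more precisely, making sure that the adjunction on bare modules genuinely restricts to the full subcategories of $\cA(SO(3),\cT)$ and $\cA(O(2),\tcT)$, rather than merely giving an adjunction between the ambient module categories. The subtlety is concentrated on the $e_1$-summand, where the $O(2)$-side works over $\bQ[c]$ with its twisted $W$-action while the $SO(3)$-side works over $\bQ[d]=\bQ[c]^W$ with trivial $W$-action, and one must track how the localisation $\cE^{-1}$ interacts with passing to $W$-fixed points. The key algebraic fact making everything go through is that $\bQ[c,c^{-1}]$ is free of rank $2$ over $\bQ[c^2,c^{-2}]=\bQ[d,d^{-1}]$ with $W$ permuting a basis appropriately, so that $\bQ[c,c^{-1}]\otimes_{\bQ[d]}(-)^W$ and the forgetful functor are mutually inverse equivalences on the relevant localised module categories; I would isolate this as the one genuinely non-formal computation and phrase it once, then let both halves of the adjunction inherit well-definedness from it. Everything else — existence of the adjoints at the level of modules, naturality, triangle identities — is formal once the functors are known to be well-defined.
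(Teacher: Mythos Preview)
Your proposal is correct and follows essentially the same route as the paper: both build $F$ via $\cO_{\cF}\otimes_{\cO_{\oF}}-$ on the module, the identity on $V$, and the adjunct structure map; both build $R$ via $(-)^W\times\mathrm{Id}$ on the module, the identity on $V$, and composition with the counit; and both derive the unit/counit from Proposition~\ref{prop:firstadjal_mod}. The one point where the arguments diverge slightly is the verification that $R(Y)$ satisfies condition~$(\star)$: the paper factors the localised structure map through $\cE^{-1}\cO_{\cF}\otimes_{\cO_{\cF}}N$ via a commutative triangle and checks the counit becomes an isomorphism after localisation by a reduction to finitely generated modules, whereas you isolate the single algebraic fact that $\bQ[c,c^{-1}]$ is free of rank~$2$ over $\bQ[d,d^{-1}]$ with $W$ exchanging eigenspaces, so that $\bQ[c,c^{-1}]\otimes_{\bQ[d]}(-)^W$ recovers any $\bQ[c,c^{-1}][W]$-module. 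Your formulation is arguably cleaner and makes the mechanism on the $e_1$-summand more transparent; the paper's version is a touch more indirect but avoids naming the rank-$2$ freeness explicitly. Either way the content is the same.
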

 \begin{proof}
Take $X=(\gamma : M \lra \cE^{-1}\cO_{\cF} \otimes V)$ in $d\cA(SO(3),\cT)$. Then $$F(X):= (\overline{\gamma}: \cO_{\cF} \otimes_{\cO_{\oF}}M \lra \cE^{-1}\cO_{\cF} \otimes V)$$
 where $\overline{\gamma}$ is the adjoint of $\gamma$ (since $ \cO_{\cF} \otimes_{\cO_{\oF}}-$ is a left adjoint from $\cO_{\oF}[W]$--modules to $\cO_{\cF}[W]$--modules, see Proposition \ref{prop:firstadjal_mod}). It is easy to see that this construction gives an object in $\cA(O(2),\tcT)$, i.e. that it satisfies the condition ($\star$) from Definition \ref{defn:object_in_AO(2)}. Since $\cE^{-1}\cO_{\cF} \otimes_{\cO_{\cF}} \overline{\gamma}$ is the same as $\cE^{-1}\cO_{\cF} \otimes_{\cO_{\oF}}\gamma$ in $\cE^{-1}\cO_{\cF}[W]$--modules and thus it is an isomorphism.

Now take $Y=(\delta : N \lra \cE^{-1}\cO_{\cF} \otimes U)$ in $d\cA(O(2),\tcT)$. Then $$R(Y):= (\delta \circ i: (e_1N)^W \times (1-e_1)N \lra N \lra \cE^{-1}\cO_{\cF} \otimes U)$$
where $i$ is the inclusion.

To see that $R(Y)\in \cA(SO(3),\cT)$ we show that the adjoint condition ($\star$) from Definition \ref{def:objectA(SO(3),T)} holds for $\delta \circ i$.

Thus we want to show that $$\overline{\delta \circ i}:\cE^{-1}\cO_{\cF}\otimes_{\cO_{\oF}}((e_1N)^W \times (1-e_1)N )\lra \cE^{-1}\cO_{\cF}\otimes U$$ is an isomorphism of $\cE^{-1}\cO_{\cF}[W]$ modules. 

Notice that we have a natural map $$\cE^{-1}\cO_{\cF}\otimes_{\cO_{\cF}}(\varepsilon_{N}): \cE^{-1}\cO_{\cF}\otimes_{\cO_{\oF}}((e_1N)^W \times (1-e_1)N ) \lra \cE^{-1}\cO_{\cF}\otimes_{\cO_{\cF}}(N )$$ where $\varepsilon$ is the counit of the adjunction from Proposition \ref{prop:firstadjal_mod}: 

After applying $e_1$ the map $e_1\varepsilon_N$ is an isomorphism for finitely generated modules $N$. Since every module is a colimit of finitely generated ones and $\otimes$ commute with colimits $e_1\varepsilon_N$ is an isomorphism for any $N$. Since $\varepsilon_N$ is an isomorphism away from $e_1$ it is an isomorphism.
To complete the argument notice that the following diagram commutes.
\[
\xymatrix @R=3pc @C=5pc{
\cE^{-1}\cO_{\cF}\otimes_{\cO_{\oF}}((e_1N)^W \times (1-e_1)N ) \ar[r]^(.6){\cE^{-1}\cO_{\cF}\otimes_{\cO_{\cF}}(\varepsilon_{N})} \ar[dr]_{\overline{\delta \circ i}} & \cE^{-1}\cO_{\cF}\otimes_{\cO_{\cF}}(N ) \ar[d]^{\overline{\delta}}\\
& \cE^{-1}\cO_{\cF}\otimes U
}
\]
where $\overline{\delta}$ is the adjoint of $\delta$ (see Proposition \ref{prop:firstadjal_mod}).

It is easy to see that this is an adjoint pair, since the unit is the identity and the counit is the pair of maps $(\varepsilon, \mathrm{Id})$ and the identity on graded $\bQ[W]$--modules. Here $\varepsilon$ is the counit of the adjunction in Proposition \ref{prop:firstadjal_mod}.
\end{proof}

\begin{prop}All small limits and colimits exist in  $\cA(SO(3),\cT)$. 
\end{prop}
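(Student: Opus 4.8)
The plan is to construct small limits and colimits in $\cA(SO(3),\cT)$ objectwise, mimicking the construction in $\cA(O(2),\tcT)$ (which exists by the argument analogous to \cite[Definition 2.2.1]{BGKS}), and then to check that the category-theoretic condition $(\star)$ is preserved. Concretely, given a small diagram $\{(M_i,V_i,\beta_i)\}_{i\in I}$ in $\cA(SO(3),\cT)$, I would first form $M=\colim_i M_i$ in $\cO_{\oF}[W]\text{--mod}$ and $V=\colim_i V_i$ in graded $\bQ[W]\text{--modules}$ (both of these module/vector-space categories are cocomplete), and then take $\beta:M\to \cE^{-1}\cO_{\cF}\otimes V$ to be the induced map, using that $\cE^{-1}\cO_{\cF}\otimes -$ commutes with colimits. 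For limits one argues dually: form $M=\lim_i M_i$ and $V=\lim_i V_i$ in the respective module categories, and take $\beta$ to be the induced map into $\cE^{-1}\cO_{\cF}\otimes \lim_i V_i$; here one does not need $\cE^{-1}\cO_{\cF}\otimes-$ to commute with the limit, since the universal property of the limit $\lim_i(\cE^{-1}\cO_{\cF}\otimes V_i)$ supplies a canonical comparison map, and composing with it gives the required $\beta$. In both cases, once the underlying objects and the structure map are produced, the universal property in $\cA(SO(3),\cT)$ follows immediately from the universal properties in the underlying categories plus the observation that morphisms in $\cA(SO(3),\cT)$ are exactly pairs of underlying morphisms making the evident square commute.

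The one genuine point to verify is that the condition $(\star)$ survives these constructions, i.e.\ that after applying $\cE^{-1}\cO_{\cF}\otimes_{\cO_{\oF}}-$ the induced map $\cE^{-1}\cO_{\cF}\otimes_{\cO_{\oF}}M\to \cE^{-1}\cO_{\cF}\otimes V$ is again an isomorphism of $\cE^{-1}\cO_{\cF}[W]$--modules. For colimits this is straightforward: $\cE^{-1}\cO_{\cF}\otimes_{\cO_{\oF}}-$ is a left adjoint (a composite of a base-change and a localisation), hence commutes with colimits, and a colimit of isomorphisms is an isomorphism. For limits the cleanest route is to split off the problematic first factor: as recorded in the Remark preceding this statement, $\cO_{\oF}\text{--mod}\cong \bQ[d]\text{--mod}\times (1-e_1)\cO_{\oF}\text{--mod}$. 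Away from $e_1$ the functor $\cE^{-1}\cO_{\cF}\otimes_{\cO_{\oF}}-$ agrees (via the counit $\varepsilon$ of Proposition \ref{prop:firstadjal_mod}, shown there to be an isomorphism away from $e_1$) with $\cE^{-1}\cO_{\cF}\otimes_{\cO_{\cF}}-$ applied to $\cO_{\cF}\otimes_{\cO_{\oF}}-$, and the relevant condition for the $\cA(O(2),\tcT)$--object obtained by applying $F$ can be invoked; on the $e_1$ factor the condition $(\star)$ amounts to $\bQ[c,c^{-1}]\otimes_{\bQ[d]}(\lim_i e_1M_i)\to \bQ[c,c^{-1}]\otimes(\lim_i e_1V_i)^W$ being an isomorphism, which reduces to the fact that $\bQ[c,c^{-1}]\otimes_{\bQ[d]}-$ is exact and that each $\beta_i$ restricted to $e_1$ already lands in $W$--fixed points.

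The main obstacle I anticipate is precisely this interaction between limits and the localisation/base-change on the first ($\bQ[d]$ vs.\ $\bQ[c]$) factor: $\cE^{-1}\cO_{\cF}\otimes_{\cO_{\oF}}-$ is not obviously exact on the left (it is a base change along $\bQ[d]\hookrightarrow\bQ[c]$, which is flat, followed by a localisation, which is flat, so it \emph{is} exact — but one should say this explicitly), and one must be careful that the $W$--fixed-point functor $(-)^W$ appearing implicitly in the definition does not obstruct commuting with limits. Since $W$ has order $2$ and we work rationally, $(-)^W$ is exact and commutes with all limits, so this turns out to be harmless, but it deserves a sentence. Everything else is a routine diagram chase using the universal properties, entirely parallel to the cocompleteness argument for $\cA(O(2),\tcT)$ and ultimately to \cite[Definition 2.2.1]{BGKS}.
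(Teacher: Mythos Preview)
Your treatment of colimits is fine and matches the paper. The gap is in your handling of limits, specifically \emph{infinite} limits.

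First, the construction of the structure map $\beta$ for limits does not work as you describe. From the diagram you obtain a map $\lim_i M_i \to \lim_i(\cE^{-1}\cO_{\cF}\otimes V_i)$, and the canonical comparison map runs $\cE^{-1}\cO_{\cF}\otimes \lim_i V_i \to \lim_i(\cE^{-1}\cO_{\cF}\otimes V_i)$, not the other way. These do not compose to give a map $\lim_i M_i \to \cE^{-1}\cO_{\cF}\otimes \lim_i V_i$; at best you get a factorisation problem (a pullback), not a composite.

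Second, and more seriously, your verification of $(\star)$ for limits appeals to the exactness (flatness) of $\bQ[c,c^{-1}]\otimes_{\bQ[d]}-$ and of the localisation $\cE^{-1}\cO_{\cF}\otimes_{\cO_{\oF}}-$. Exactness buys you commutation with \emph{finite} limits only. The functor $\cE^{-1}\cO_{\cF}\otimes_{\cO_{\oF}}-$ is a filtered colimit of localisations and genuinely fails to commute with infinite products, so an infinite product of objects satisfying $(\star)$ need not satisfy $(\star)$ when formed naively. Your reduction to the $\cA(O(2),\tcT)$ case via $F$ does not help either: the same obstruction is present there, which is exactly why \cite[Definition 2.2.1]{BGKS} gives a non-obvious construction of infinite limits (via a pullback along the comparison map) rather than the objectwise one.

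The paper's own proof reflects this: it constructs colimits and \emph{finite} limits objectwise, then explicitly defers to the BGKS construction for infinite limits and declines to spell it out. Your argument is correct for colimits and for finite limits (where exactness does the job), but for arbitrary small limits you need the BGKS pullback construction, not the naive one.
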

\begin{proof} Suppose we have a diagram of objects $M_i \lra \cE^{-1}\cO_{\cF} \otimes V_i$ in $ \cA(SO(3),\cT)$ indexed by a category $I$. The colimit of this diagram is 
$$\colim_i M_i \lra \cE^{-1}\cO_{\cF} \otimes (\colim_i V_i).$$
If the diagram is finite, than the limit is formed in $ \cA(SO(3),\cT)$ in a similar way.
$$\lim_i M_i \lra \cE^{-1}\cO_{\cF} \otimes (\lim_i V_i).$$
To construct infinite limits in a category $\cA(SO(3),\cT)$ we use the same method as in \cite[Definition 2.2.1]{BGKS}. However, since we don't use the construction of infinite limits anywhere in this paper, we skip the technicalities. 

Verifying that these constructions define limits and colimits in $\cA(SO(3),\cT)$ is routine.
\end{proof}

Let $g\bQ[W]\endash\mathrm{mod}$ denote the category of graded $\bQ[W]$--modules and let $\mathrm{tors}\endash\cO_{\oF}[W]^f\endash \mathrm{mod}$ denote the category of $\oF$--finite torsion $\cO_{\oF}[W]$--modules. Recall that an $\cO_{\oF}[W]$--module $M$ is $\oF$--finite if it is a direct sum of its submodules $e_{(H)}M$:
$$M=\bigoplus_{(H)\in \oF} e_{(H)}M.$$

We define two functors relating $\cA(SO(3),\cT)$ to some simpler categories, which will allow us to create two clasess of injective objects in $\cA(SO(3),\cT)$. 
\begin{defn}\label{def:e_and_f}Let the functor $e: g\bQ[W]\endash \mathrm{mod} \lra  \cA(SO(3),\cT)$ be defined by
$$e(V):= (P \lra \cE^{-1}\cO_{\cF} \otimes V)$$
where $e_1P=\bQ[d,d^{-1}]\otimes V^+ \oplus \Sigma^2 \bQ[d,d^{-1}]\otimes V^-$ and $(1-e_1)P=(1-e_1)\cE^{-1}\cO_{\cF} \otimes V$. Here $V^+$ is the $W$--fixed part of $V$, $V^-$ is the $-1$ eigenspace and $\Sigma$ is the suspension.  The structure map is essentially just an inclusion.

Define a functor $f: \mathrm{tors}\endash\cO_{\oF}[W]^f\endash\mathrm{mod}\lra \cA(SO(3),\cT)$ by
$$f(N):= (N \lra 0).$$
\end{defn}
Notice that the domain for this functor was chosen, so that the $f(N) \in \cA(SO(3),\cT)$, i.e. it satisfies the condition ($\star$) from Definition \ref{def:objectA(SO(3),T)}.

\begin{prop}\label{adjointsInASO3C}For any object $X=(\gamma : M\lra \cE^{-1}\cO_{\cF}\otimes V)$ in $\cA=\cA(SO(3),\cT)$, any $V$ in  $\bQ[W]\endash\mathrm{mod}$ and any $N$ in $\mathrm{tors}\endash\cO_{\oF}[W]^f\endash\mathrm{mod}$ we have natural isomorphisms:
$$\hom_{\cA}(X, e(V))=\hom_{\bQ[W]}(U,V)$$
$$\hom_{\cA}(X, f(N))=\hom_{\cO_{\oF}[W]}(M,N)$$
\end{prop}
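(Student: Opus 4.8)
The statement asserts two adjunction isomorphisms, so the natural approach is to verify each one directly by unpacking the definition of a morphism in $\cA = \cA(SO(3),\cT)$ from Definition \ref{def:objectA(SO(3),T)} and matching data on both sides. For the second isomorphism $\hom_{\cA}(X, f(N)) = \hom_{\cO_{\oF}[W]}(M,N)$ this is the easier case: a morphism $(\alpha,\phi): X \lra f(N) = (N \lra 0)$ consists of an $\cO_{\oF}[W]$--module map $\alpha: M \lra N$ together with a map $\phi: V \lra 0$ of graded $\bQ[W]$--modules making the evident square commute. The second piece of data is forced ($\phi = 0$) and the square commutes automatically because the codomain of the structure map of $f(N)$ is zero. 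Hence the morphism is precisely the data of $\alpha$, giving the natural bijection with $\hom_{\cO_{\oF}[W]}(M,N)$, and naturality in $X$ and $N$ is immediate.

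For the first isomorphism $\hom_{\cA}(X, e(V)) = \hom_{\bQ[W]}(U,V)$, where $e(V) = (P \lra \cE^{-1}\cO_{\cF}\otimes V)$ with $e_1 P = \bQ[d,d^{-1}]\otimes V^+ \oplus \Sigma^2\bQ[d,d^{-1}]\otimes V^-$ and $(1-e_1)P = (1-e_1)\cE^{-1}\cO_{\cF}\otimes V$, I would argue as follows. A morphism $X \lra e(V)$ is a pair $(\alpha,\phi)$ with $\alpha: M \lra P$ a map of $\cO_{\oF}[W]$--modules and $\phi: V \lra V$ — wait, rather $\phi$ is the map on the vector-space component; here the vector-space component of $e(V)$ is $V$ itself, so $\phi: V_X \lra V$ where I write $V_X$ for the $V$ of $X$. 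The claim is that such a pair is determined by, and freely determined by, a $\bQ[W]$--map $U \lra V$. The key observation is that $e$ should be right adjoint to the functor sending $X = (\gamma: M \lra \cE^{-1}\cO_{\cF}\otimes V_X)$ to its "$V$-at-infinity" data, which by condition $(\star)$ recovers $U = \cE^{-1}\cO_{\cF}\otimes_{\cO_{\oF}} M$. So I would show that giving $(\alpha,\phi)$ is equivalent to giving $\phi$ alone (i.e. $\alpha$ is forced once $\phi$ is fixed, and conversely every $\phi$ extends uniquely), using that $\cE^{-1}\cO_{\cF}\otimes_{\cO_{\oF}} P$ is determined by $V$ via the structure map, that inverting the Euler classes on $P$ produces exactly $\cE^{-1}\cO_{\cF}\otimes V$, and that the structure map $\beta$ of $X$ together with $(\star)$ forces $\alpha$ to be the "restriction" of $\cE^{-1}\cO_{\cF}\otimes_{\cO_{\oF}}\alpha$, which in turn is $\cE^{-1}\cO_{\cF}\otimes\phi$. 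The $W$-fixed-point decomposition $e_1 P = \bQ[d,d^{-1}]\otimes V^+ \oplus \Sigma^2\bQ[d,d^{-1}]\otimes V^-$ is precisely engineered (via the earlier remark that $(\bQ[c])^W = \bQ[d]$) so that maps out of $e_1 M$ land correctly; I would check that the constraint noted after Definition \ref{def:objectA(SO(3),T)} — that the image of $e_1 M$ lies in the $W$--fixed points of $\bQ[c,c^{-1}]\otimes V$ — matches the $e_1 P$ formula exactly.

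The main obstacle I expect is the bookkeeping at the $e_1$ (trivial-subgroup) factor in the first isomorphism: one must carefully see that $\hom$ of $\cO_{\oF}[W]$--modules from $M$ into $\bQ[d,d^{-1}]\otimes V^+ \oplus \Sigma^2\bQ[d,d^{-1}]\otimes V^-$, compatible with the structure maps, is naturally the same as $\bQ[W]$--maps out of the whole $U$ into $V$ — in particular that the degree shift $\Sigma^2$ and the passage between $\bQ[d]$ (degree $-4$) and $\bQ[c]$ (degree $-2$) on the $W$-eigenspaces conspire to reproduce $\cE^{-1}\cO_{\cF}\otimes V$ on the nose after inverting Euler classes. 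Once that identification is in place, the rest is a routine diagram chase: uniqueness of $\alpha$ given $\phi$ follows because the structure maps of both $X$ and $e(V)$ become isomorphisms after inverting the Euler classes (this is condition $(\star)$), so $\alpha$ is forced on the "free part" and determined elsewhere by torsion considerations. Naturality in all variables is then formal. I would present the construction of the unit and counit explicitly — the unit $X \lra e(U)$ built from $\beta$ and $(\star)$, the counit $U_{e(V)} \lra V$ the identity — and note the triangle identities hold by inspection, exactly as the analogous statements are handled in \cite{GreenleesS1} and \cite[Section 3]{BarnesO(2)new}.
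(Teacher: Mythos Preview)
The paper states Proposition \ref{adjointsInASO3C} without proof, so there is nothing to compare against directly; the result is the $SO(3)$--analogue of \cite[Lemma 5.3.1]{GreenleesS1} and the corresponding fact in \cite[Section 3]{BarnesO(2)new}, and the paper evidently regards it as routine.

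Your plan is essentially correct. The second isomorphism is exactly as easy as you say. For the first, the cleanest formulation of what you are circling around is this: the structure map of $e(V)$ is an \emph{inclusion} $P \hookrightarrow \cE^{-1}\cO_{\cF}\otimes V$, so for any pair $(\alpha,\phi)$ the commuting square forces $\alpha$ to be the composite $(\cE^{-1}\cO_{\cF}\otimes\phi)\circ\gamma$, viewed as landing in $P$. Thus $\alpha$ is determined by $\phi$. Conversely, given any $\phi:U\lra V$ one must check only that $(\cE^{-1}\cO_{\cF}\otimes\phi)\circ\gamma$ does factor through $P$: on the $(1-e_1)$ part this is automatic since $(1-e_1)P=(1-e_1)\cE^{-1}\cO_{\cF}\otimes V$; on the $e_1$ part it follows because $e_1P$ is precisely $(\bQ[c,c^{-1}]\otimes V)^W$ (this is the content of the formula $e_1P=\bQ[d,d^{-1}]\otimes V^+\oplus\Sigma^2\bQ[d,d^{-1}]\otimes V^-$, using $(\bQ[c])^W=\bQ[d]$), and the image of $e_1M$ under $\gamma$ already lies in the $W$--fixed part by the remark after Definition \ref{def:objectA(SO(3),T)}. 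No appeal to ``torsion considerations'' or to condition~$(\star)$ is needed for uniqueness --- injectivity of the structure map of $e(V)$ does all the work. Your aside that ``condition $(\star)$ recovers $U=\cE^{-1}\cO_{\cF}\otimes_{\cO_{\oF}}M$'' is not right as stated (the condition gives an isomorphism $\cE^{-1}\cO_{\cF}\otimes_{\cO_{\oF}}M\cong\cE^{-1}\cO_{\cF}\otimes U$, not an identification of $U$ itself), but you do not actually use it, so this is harmless.
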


\begin{rmk}The proposition above implies that an object $e(V)$ is injective for any $V$ and that if $N$ is an injective  $\oF$--finite torsion $\cO_{\oF}[W]$--module then $f(N)$ is also injective.
\end{rmk}

\begin{lem}\label{lemmaForASS} The category $\cA(SO(3),\cT)$ is a (graded) abelian category of injective dimension $1$. Moreover it is split, i.e. every object $X$ of $\cA(SO(3),\cT)$ has a splitting $X=X_+ \oplus X_-$ so that $\hom(X_\delta, Y_\epsilon)=0$ and $Ext(X_\delta, Y_\epsilon)=0$ if $\delta\neq \epsilon$ and $(\Sigma X)_+ = \Sigma(X_{-})$ and $(\Sigma X)_- = \Sigma(X_{+})$.
\end{lem}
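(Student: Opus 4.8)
The strategy is to reduce each assertion about $\cA(SO(3),\cT)$ to the corresponding fact about $\cA(O(2),\tcT)$ (which is known, cf.\ \cite[Section 3]{BarnesO(2)new} and \cite{GreenleesS1}) via the adjunction $F\dashv R$ of Proposition \ref{prop:secondadjal_mod}, together with the block decomposition of $\cO_{\oF}$-modules into the $\bQ[d]$-factor and the $(1-e_1)\cO_{\oF}$-factor noted in the remark after Definition \ref{def:objectA(SO(3),T)}. First I would verify that $\cA(SO(3),\cT)$ is abelian: it is a full subcategory of an evident abelian category of triples $(M,V,\beta)$ with no condition, cut out by the exactness-type condition $(\star)$, and one checks that $(\star)$ is preserved under kernels and cokernels formed levelwise (using that $\cE^{-1}\cO_\cF\otimes_{\cO_{\oF}}-$ is exact, being a localisation followed by an extension of scalars along a flat map, and that $\cE^{-1}\cO_\cF\otimes-$ on vector spaces is exact). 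The grading is inherited from the grading on all the module categories involved.

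Next, for the injective dimension bound, I would use the two functors $e$ and $f$ of Definition \ref{def:e_and_f} and their right-adjoint property in Proposition \ref{adjointsInASO3C}. The point is that the objects $e(V)$ (for $V$ a graded $\bQ[W]$-module, automatically injective since $\bQ[W]$ is semisimple) and $f(N)$ (for $N$ an injective $\oF$-finite torsion $\cO_{\oF}[W]$-module) are injective in $\cA(SO(3),\cT)$, and I would show every object $X=(\gamma\colon M\to \cE^{-1}\cO_\cF\otimes V)$ embeds into a product $e(V)\times f(N)$ with cokernel again of the same shape, giving an injective resolution of length $1$. Concretely, map $X$ to $e(V)$ via (an injective hull built from) the structure map composed with the natural unit $M\to U(\cE^{-1}\cO_\cF\otimes_{\cO_{\oF}}M)\cong$ the $e$-side, and to $f(N)$ by embedding the torsion part of $M$ into an injective; then identify the cokernel. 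Here one transports the analogous length-$1$ computation from $\cA(O(2),\tcT)$, checking compatibility with $F$ and $R$; the subtlety is the first factor, where on $O(2)$ one has $\bQ[c,c^{-1}]$-modules but on $SO(3)$ one has $\bQ[d,d^{-1}]$-modules with the extra constraint (from the remark after Definition \ref{def:objectA(SO(3),T)}) that $e_1M$ lands in $W$-fixed points of $\bQ[c,c^{-1}]\otimes V$, and one must see the homological algebra is unaffected since $\bQ[d]=\bQ[c]^W$ is again a PID and $\bQ[W]$-representations are semisimple.

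For the splitting, I would first split the category of graded $\bQ[W]$-modules as $\bQ[W]\text{-mod}\simeq \bQ\text{-mod}_+\times\bQ\text{-mod}_-$ into the trivial and sign eigenspaces. This induces a splitting $V=V^+\oplus V^-$ and, because $w$ acts on each $c_{(H)}$ (and on $c$) by $-1$ while fixing $d$, a parity on $\cO_{\oF}[W]$-modules $M$: an element of $M$ has a well-defined parity given by (its $\bQ[W]$-parity) $+$ (minus the number of $c$'s or the suspension degree) appropriately, so $M=M_+\oplus M_-$ as $\bQ$-modules with $\bar c,\bar d$ and the differential respecting this with the stated degree shift. Set $X_\delta=(M_\delta\to \cE^{-1}\cO_\cF\otimes V_\delta)$; one checks $(\star)$ is compatible with this decomposition and that $X=X_+\oplus X_-$. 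The vanishing $\hom(X_\delta,Y_\epsilon)=0$ for $\delta\neq\epsilon$ is immediate since a morphism must preserve parity; the vanishing of $\mathrm{Ext}^1$ follows from the injective-dimension-$1$ structure because the injective objects $e(V)$, $f(N)$ are themselves homogeneous for this parity (as $\bQ[W]$ is semisimple and the resolutions can be chosen parity-homogeneous). Finally $\Sigma$ carries $c_{(H)}$-degree $-2$ (odd in the parity count) so it swaps $+$ and $-$, giving $(\Sigma X)_\pm=\Sigma(X_\mp)$.

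\textbf{Main obstacle.} The genuinely delicate step is pinning down the injective dimension $1$ claim: one must produce, functorially enough, the two-term injective resolution of an arbitrary $X$, and the first ($\bQ[d]$ versus $\bQ[c]$) factor is where the $SO(3)$ model differs from the $O(2)$ model, so the transported argument from \cite{BarnesO(2)new} needs the observation that replacing $\bQ[c,c^{-1}]$ by its $W$-fixed subring $\bQ[d,d^{-1}]$ (and correspondingly restricting $\beta$ on $e_1M$ to $W$-fixed points) keeps everything a computation over a graded PID with semisimple coefficient ring, hence preserves the bound. Everything else — abelianness, the parity splitting, the $\Sigma$-interchange, and the $\mathrm{Hom}/\mathrm{Ext}$ vanishing — is then routine bookkeeping.
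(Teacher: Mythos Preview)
Your plan for the abelian structure and the injective-dimension-$1$ bound is essentially the paper's own argument: one embeds $X=(\gamma\colon M\to\cE^{-1}\cO_\cF\otimes V)$ into $e(V)\oplus f(I')$ (with $I'$ an injective hull of the torsion $\ker\gamma$) and identifies the cokernel as $f(J'\oplus J'')$ with $J''=P/\mathrm{im}\,\gamma$ divisible, hence injective. The detour through $F\dashv R$ is unnecessary --- the paper works directly in $\cA(SO(3),\cT)$, using only that $\bQ[d]$ and each $\bQ[c_{(H)}][W]$ have injective dimension $1$ --- but your outline arrives at the same resolution.

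The genuine gap is in the splitting. Your proposed parity built from $\bQ[W]$-eigenvalues does not give a decomposition of $X$ as an object of $\cA(SO(3),\cT)$: since $wc_{(H)}=-c_{(H)}w$, multiplication by $c_{(H)}$ sends the $(+1)$-eigenspace of $w$ to the $(-1)$-eigenspace, so neither eigenspace is an $\cO_{\oF}[W]$-submodule of $M$ (away from the $e_1$ factor). Your attempt to repair this by mixing in ``the number of $c$'s or the suspension degree'' is not a well-defined invariant of a general module element, and the sentence ``$\Sigma$ carries $c_{(H)}$-degree $-2$'' conflates suspension (degree shift $+1$) with the ring grading. The paper's splitting is much simpler: take $X_+$ and $X_-$ to be the even- and odd-degree parts. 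This works because every generator of $\cO_{\oF}[W]$ has even degree ($\deg c_{(H)}=-2$, $\deg d=-4$, $\deg w=0$), so the even and odd parts are $\cO_{\oF}[W]$-submodules and the structure map $\beta$ respects them; the explicit length-$1$ resolution above then visibly preserves degree parity, giving the $\mathrm{Ext}$-vanishing, and $\Sigma$ swaps parities because it shifts degree by $1$.
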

\begin{proof} The category $\cA(SO(3),\cT)$ is enriched in abelian groups and by construction of all limits and colimits we can conclude that it is an abelian category. 

For an object $X=(\gamma : M\lra \cE^{-1}\cO_{\cF}\otimes V)$ we construct the injective resolution of length 1 as follows. Let $TM:= \mathrm{ker}\gamma$, which is torsion, and thus there is an injective resolution of $\oF$--finite torsion $\cO_{\oF}[W]$--modules
\[
\xymatrix {
0 \ar[r] & TM \ar[r]& I' \ar[r] & J' \ar[r] & 0
}
\]
where $I',J'$ are injective $\oF$--finite torsion $\cO_{\oF}[W]$--modules, since $\bQ[d]$ and all $\bQ[c_{(H)}][W]$ are of injective dimension 1. 

Let us use simplified notation below. Let $P$ denote the $\cO_{\oF}[W]$--module from the definition of $e(V)$  (see Definition \ref{def:e_and_f}).\\
If $Q$ is the image of $\gamma$ then $J''= P/Q$ is divisible and an $\oF$--finite torsion $\cO_{\oF}[W]$--module and hence injective. We form a diagram of $\cO_{\oF}[W]$--modules
\[
\xymatrix {
& 0  \ar[d]& 0 \ar[d]& 0  \ar[d]&  \\
0 \ar[r] & TM \ar[r] \ar[d]& M \ar[r] \ar[d]& Q\ar[r] \ar[d]& 0 \\
0 \ar[r] & I' \ar[r] \ar[d] & I'\oplus P \ar[r] \ar[d] & P \ar[r] \ar[d] & 0\\
0 \ar[r] & J' \ar[r] \ar[d] & J'\oplus J'' \ar[r] \ar[d] & J'' \ar[r] \ar[d] & 0 \\
 & 0 & 0  & 0 &\\
}
\]

and hence a diagram:
\[
\xymatrix {
0 \ar[r] & M \ar[r] \ar[d]^{\gamma} & I'\oplus P \ar[r] \ar[d]& J'\oplus J''\ar[r] \ar[d]& 0 \\
0 \ar[r] & \cE^{-1}\cO_{\cF}\otimes V  \ar[r] & \cE^{-1}\cO_{\cF}\otimes V \ar[r] & 0 \ar[r]& 0\\
}
\]
which is the required resolution of $\gamma : M\lra \cE^{-1}\cO_{\cF}\otimes V$ in $\cA(SO(3),\cT)$.

Finally, the splitting is given by taking even--graded part and odd--graded part. This satisfies the required conditions since the resolution above of an object $X_\delta$ is entirely in parity $\delta$.
\end{proof}

%%%%%%%%%%%%%%%%%%%%%%%%%%%%%%%%%%%%%%%%%%%%%%%%%%%%%%
\subsection{Model category $d\cA(SO(3),\cT)$}
In this section we will concentrate on the model category $d\cA(SO(3),\cT)$ and we will investigate its properties. First notice that all constructions from the previous section (limits and colimits, adjoints $F$ and $R$) pass naturally to the category $d\cA(SO(3),\cT)$. 

By the result of the previous section and \cite[Proposition 4.1.3]{GreenleesS1} we can construct the derived category of $\cA(SO(3),\cT)$ by taking objects with differential in $\cA(SO(3),\cT)$ and inverting the homology isomorphisms.  
\begin{thm}There is an injective model structure on the category $d\cA(SO(3),\cT)$ where weak equivalences are homology isomorphisms and cofibrations are monomorphisms.
\end{thm}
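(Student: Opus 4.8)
The strategy is to mimic the construction of the injective model structure on $d\cA(O(2),\tcT)$ from \cite[Appendix A]{GreenleesS1}, using the structural facts about $\cA(SO(3),\cT)$ established in the previous section. The key inputs are: (i) $\cA(SO(3),\cT)$ is an abelian category with all small limits and colimits; (ii) it is of injective dimension $1$ (Lemma \ref{lemmaForASS}), with an explicit two-step injective resolution built from objects of the form $e(V)$ and $f(N)$; and (iii) it is split into an even and odd part, compatibly with suspension. These are precisely the hypotheses under which \cite[Proposition 4.1.3]{GreenleesS1} produces an injective model structure on the category of differential objects.

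\textbf{Steps.} First I would recall the general machinery: given an abelian category $\cA$ with enough injectives, of finite injective dimension, and with all limits and colimits, the category $d\cA$ of differential objects admits a model structure in which the weak equivalences are the homology isomorphisms, the cofibrations are the monomorphisms, and the fibrations are the degreewise split epimorphisms with levelwise injective kernel (or characterised via the right lifting property). This is exactly the setup of \cite[Proposition 4.1.3]{GreenleesS1}; the verification of the model category axioms there uses only the abelian structure, the existence of (co)limits, and the injective dimension bound. Second, I would check each hypothesis for $\cA(SO(3),\cT)$: abelian-ness and (co)completeness were proved in the preceding propositions, and injective dimension $1$ together with the concrete description of injectives (sums of $e(V)$'s and $f(N)$'s for $N$ injective $\oF$--finite torsion) is Lemma \ref{lemmaForASS} and the remark following Proposition \ref{adjointsInASO3C}. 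Third, I would invoke the splitting from Lemma \ref{lemmaForASS} to note that one may work one parity at a time, which is what makes the factorisation axioms manageable — the argument reduces to constructing injective resolutions, and these exist of length $1$. Finally, stability and properness follow as in \cite[Appendix A]{GreenleesS1} from the fact that the model structure is built on an abelian category with monomorphisms as cofibrations: every object is cofibrant, so left properness is automatic, and right properness follows from the behaviour of homology under pullback along levelwise-injective fibrations.

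\textbf{Main obstacle.} The genuine content is not the abstract transfer but checking that the ingredients of \cite[Proposition 4.1.3]{GreenleesS1} really apply: in particular that $\cA(SO(3),\cT)$ has \emph{enough} injectives and that the injective resolution constructed in Lemma \ref{lemmaForASS} is functorial enough (or can be made so) to run the small-object-type argument producing functorial factorisations. Since $\cA(SO(3),\cT)$ is not a module category on the nose — it is a comma-type category of maps $M \to \cE^{-1}\cO_{\cF}\otimes V$ — one must be slightly careful that the two families $e(V)$ and $f(N)$ generate enough injectives; this is where the explicit diagram in the proof of Lemma \ref{lemmaForASS} is used. Once that is in hand, the remaining verifications are routine and parallel to the $O(2)$ and $SO(2)$ cases, so I would state that the proof is formally identical to \cite[Appendix A]{GreenleesS1} and \cite[Section 4]{BGKS}, only citing the points where the split structure of $\cA(SO(3),\cT)$ is invoked.
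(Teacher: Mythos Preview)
Your proposal is correct and takes essentially the same approach as the paper: both reduce to the fact that $\cA(SO(3),\cT)$ is abelian of injective dimension $1$ (Lemma \ref{lemmaForASS}) and then invoke \cite[Appendix A]{GreenleesS1}. The only minor difference is that you fold properness into this argument, whereas the paper establishes right properness separately (Proposition \ref{prop:rightproper_AT}) via the wide spheres, since in this comma-type category one needs to know fibrations are surjections before concluding right properness.
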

\begin{proof}Since the category $\cA(SO(3),\cT)$ is abelian of injective dimension 1 we can use the construction from \cite[Appendix A]{GreenleesS1}.  
\end{proof}
We call $d\cA(SO(3),\cT)$ with the injective model structure the \textbf{algebraic model for toral rational $SO(3)$--spectra}.

To show that the injective model structure is right proper in Proposition \ref{prop:rightproper_AT} we need to introduce a class of objects in $\cA(SO(3),\cT)$ called \emph{wide spheres}. This class generalises the images of representation spheres from rational $SO(3)$--spectra in $\cA(SO(3),\cT)$, hence the name.

\begin{defn}We define $\underline{c}^{2n}$ to be an element of $\cE^{-1}\cO_{\cF}$ of the form $(c^{2n}, c^{2n}, c^{2n},....)$. Notice, that we can view an element  $ \underline{c}^{2n}$ as an element of $\cO_{\oF}$ of the form $(d^n, c^{2n}, c^{2n},....)$.\\
We define $ \underline{c}^{2n+1}$ for $n>0$ to be an element of $\cE^{-1}\cO_{\cF}$ of the form $(c^{2n+1}, c^{2n+1}, c^{2n+1}, c^{2n+1},....)$.
\end{defn}

\begin{defn} A \textbf{wide sphere} in $\cA(SO(3),\cT)$ is an object $P=(S \lra \cE^{-1}\cO_{\cF} \otimes T)$ where $T$ is a graded $\bQ[W]$--module, which is finitely generated as a $\bQ$--module on elements $t_1,...,t_d$, where every $t_i$ is either $W$--fixed or $W$ acts on $t_i$ by $-1$ and $\mathrm{deg}(t_i)=k_i$. A module $S$ is an $\cO_{\oF}$--submodule of $\cE^{-1}\cO_{\cF} \otimes T$ generated by elements $\underline{c}^{a_i}\otimes t_1,..., \underline{c}^{a_d}\otimes t_d$ where ${a_i}$ is either even if $t_i$ is $W$--fixed or odd if $W$ acts on $t_i$ by $-1$, and an element $\sum_{i=1}^d\sigma_i \otimes t_i$ of $ \cE^{-1}\cO_{\cF} \otimes T$. It is also required that the structure map be the inclusion. We denote by $\cP$ the set of isomorphism classes of wide spheres.
\end{defn}

We want to show that there are enough wide spheres in $\cA(SO(3),\cT)$, i.e. for any $X \in \cA(SO(3),\cT)$ there exists an epimorphism from some coproduct of wide spheres to $X$.

\begin{prop}\label{xxxx}There are enough wide spheres in $\cA(SO(3),\cT)$.
\end{prop}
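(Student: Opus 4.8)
The plan is to construct, for a given object $X=(\gamma:M\lra\cE^{-1}\cO_\cF\otimes V)$ of $\cA(SO(3),\cT)$, an explicit epimorphism onto $X$ from a coproduct of wide spheres, handling the ``vector space'' data $V$ and the ``torsion'' data separately and then gluing. First I would reduce to the split pieces: by Lemma \ref{lemmaForASS} every object decomposes as $X_+\oplus X_-$, and a wide sphere is itself concentrated in a single parity (since the generators $t_i$ have parity matching the parity of the exponents $a_i$), so it suffices to cover each $X_\delta$ separately; I will not belabour this and will just work with a general $X$, choosing homogeneous generators of the appropriate parity throughout.

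The core construction proceeds in two stages. Stage one: cover the part of $X$ coming from $V$. Pick a set of homogeneous $\bQ$--module generators of $V$, each of which may be taken $W$--fixed or in the $(-1)$--eigenspace after splitting $V=V^+\oplus V^-$; for a single such generator $t$ of degree $k$ and parity $\delta$, form the wide sphere $P_t=(S_t\lra \cE^{-1}\cO_\cF\otimes T_t)$ with $T_t=\bQ\{t\}$, with $S_t$ the $\cO_{\oF}$--submodule generated by $\underline{c}^{a}\otimes t$ for a suitable exponent $a$ of parity $\delta$ (large enough negative-degree shift is irrelevant here; $a$ can be taken minimal, e.g. $0$ or $1$) together with a chosen element $\sigma\otimes t$ lifting, via the iso $(\star)$, a preimage in $M$ of $\gamma$ applied appropriately — concretely one uses surjectivity of $\cE^{-1}\cO_\cF\otimes_{\cO_{\oF}}M\to\cE^{-1}\cO_\cF\otimes V$ to choose $\sigma\otimes t$ in the image so that $P_t$ maps to $X$ hitting $t\in V$ and landing inside $M$ on the submodule $S_t$. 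Summing over all chosen generators of $V$ gives a map from a coproduct of wide spheres that is surjective onto $V$ and whose image in $M$ is some submodule $M_0\subseteq M$.

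Stage two: cover the remaining torsion. The cokernel of $M_0\hookrightarrow M$ maps to zero in $\cE^{-1}\cO_\cF\otimes V$ because the $V$--coordinate is already surjected onto; hence by condition $(\star)$ it is an $\oF$--torsion $\cO_{\oF}[W]$--module. Pick homogeneous $\cO_{\oF}[W]$--module generators of it, lift each to an element $m\in M$ of the appropriate parity, and for each such $m$ form the wide sphere $(\cO_{\oF}\{m\}\lra \cE^{-1}\cO_\cF\otimes 0)$ — i.e.\ the wide sphere with $T=0$ and $S$ cyclic on one generator $\underline{c}^{a}\otimes(\text{the generator})$, which is exactly $f$ applied to a cyclic torsion module and lies in $\cA(SO(3),\cT)$ by the remark following Definition \ref{def:e_and_f}. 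These map to $X$ with image the cyclic submodule generated by $m$. Taking the coproduct of all wide spheres from both stages yields a map $\coprod P_j\to X$ which is surjective on $V$ and whose image in $M$ contains $M_0$ together with generators of $M/M_0$, hence is all of $M$; so it is an epimorphism in $\cA(SO(3),\cT)$ (epimorphisms there are detected on the underlying $M$ and $V$ by the explicit colimit description).

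The main obstacle is the bookkeeping around the structure map and the condition $(\star)$: one must choose the exponents $a_i$ and the ``free'' element $\sigma_i\otimes t_i$ so that (a) each $P_t$ genuinely satisfies $(\star)$ and is therefore an object of $\cA(SO(3),\cT)$, and (b) the resulting map is a morphism in $\cA(SO(3),\cT)$, i.e.\ the square with $\gamma$ commutes, which forces the chosen $\sigma_i$ to be compatible with a lift of $\gamma(m)$ rather than arbitrary. The honest content is that surjectivity of $\cE^{-1}\cO_\cF\otimes_{\cO_{\oF}}M\to\cE^{-1}\cO_\cF\otimes V$ (part of $(\star)$ for $X$) supplies exactly the lifts needed in Stage one, and $\oF$--torsion of $\ker\gamma$ and of $M/M_0$ supplies the cyclic torsion wide spheres in Stage two; once these two inputs are in hand the verification is routine, so I would state the construction and check $(\star)$ and commutativity for a single wide sphere, then assert the coproduct is an epimorphism.
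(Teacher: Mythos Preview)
There is a genuine gap in Stage two. By definition a wide sphere has its nub $S$ sitting \emph{inside} $\cE^{-1}\cO_{\cF}\otimes T$ with the structure map being the inclusion; hence if $T=0$ then $S=0$. The objects $f(N)=(N\to 0)$ for nonzero torsion $N$ are therefore \emph{not} wide spheres, and your proposed ``wide sphere with $T=0$ and $S$ cyclic on one generator $\underline{c}^{a}\otimes(\text{the generator})$'' does not exist. A natural attempted repair is to keep $T$ one--dimensional but take $\phi:T\to V$ to be zero; then commutativity of the square forces $\alpha(S)\subseteq\ker\gamma$, so you can only hit elements of $\ker\gamma$ this way. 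But the lifts $m\in M$ of generators of $M/M_0$ produced in Stage two need not lie in $\ker\gamma$ (their image $\gamma(m)$ is merely in the image of Stage one, not zero), so this repair does not cover them either. In short, the two--stage decomposition into ``cover $V$'' and ``cover torsion separately'' cannot be implemented with wide spheres as defined.

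The paper avoids this by working element by element and exploiting the full shape of a wide sphere. Given any $n\in M$, write $\gamma(n)=\sum_{i=1}^{d}\sigma_i\otimes t_i$ with each $t_i$ a $W$--eigenvector; the wide sphere is built with $T=\bQ\{t_1,\dots,t_d\}$ and $S$ generated by suitable $\underline{c}^{a_i}\otimes t_i$ \emph{together with} the distinguished extra generator $\sum_i\sigma_i\otimes t_i$. The map to $X$ sends this extra generator to $n$ and sends each $\underline{c}^{a_i}\otimes t_i$ to an element $p_i\in M$ with $\gamma(p_i)=\underline{c}^{a_i}\otimes t_i$ (such $p_i$ exist by condition~$(\star)$); the main work is choosing the exponents so that the single relation in $S$ linking the extra generator to the $\underline{c}^{a_i}\otimes t_i$ already holds in $M$, which again uses $(\star)$. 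The point you are missing is that it is precisely this extra generator, not a separate torsion object, that lets a wide sphere reach an arbitrary element of $M$ --- including those whose class in $M/M_0$ is nonzero.
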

\begin{proof}We need to show that for any object $X=(\beta: N \lra \cE^{-1}\cO_{\cF} \otimes U)$ in $\cA(SO(3),\cT)$ and any $n\in N$ there exists a wide sphere $P$ and a map $P\lra X$ such that $n$ is in the image and for any  $u\in U$ there exists a wide sphere $\overline{P}$ and a map $\overline{P} \lra X$ such that $u$ is in the image. Since the adjoint of $\beta$ is an isomorphism it is enough to show the above condition for any $n\in N$.  

Take $X=(\beta: N \lra \cE^{-1}\cO_{\cF} \otimes U)$ in $\cA(SO(3),\cT)$ and $n\in N$. Then $\beta(n)=\sum_{i=1}^d \sigma_i \otimes t_i$. We may assume that for every $i$, either $t_i$ is $W$--fixed or $W$ acts on $t_i$ by $-1$. Then notice that since $e_1\beta(n)$ is $W$--fixed $e_1 \sigma_i$ will be of the form $c^{2k}$ if $t_i$ was $W$--fixed or $c^{2k+1}$ if $W$ acts on $t_i$ by $-1$ ($k$ is some integer here). 
 
For each $i$, there exist $p_i \in N$ such that $\beta(p_i)=\underline{c}^{2b_i} \otimes t_i$ if $t_i$ was $W$--fixed or $\beta(p_i)=\underline{c}^{2b_i+1} \otimes t_i$ if $W$ acts on $t_i$ by $-1$. Set $f= (\underline{c})^{2b_1+...+2b_d}$.
We may assume that the $b_i$ were large enough so that 
$\sigma_i \underline{c}^{2b_1+...+2b_d}/\underline{c}^{2b_i}$ is in $\cO_{\oF}$ if $t_i$ was $W$--fixed and 
$\sigma_i \underline{c}^{-1}\underline{c}^{2b_1+...+2b_d}/\underline{c}^{2b_i}$ is in $\cO_{\oF}$ if $W$ acts on $t_i$ by $-1$.

Now we have $$\beta(\sum^+ \sigma_i \underline{c}^{2b_1+...+2b_d}/\underline{c}^{2b_i}p_i + \sum^- \sigma_i \underline{c}^{-1}\underline{c}^{2b_1+...+2b_d}/\underline{c}^{2b_i}p_i)=\sum_{i=1}^d \sigma_i f \otimes t_i = \beta(fn)$$
where $\sum^+$ denotes the sum over all $t_i$ which are $W$--fixed and $\sum^-$ denotes the sum over all the others.

Since the adjoint of $\beta$ is an isomorphism there exists an element $\underline{c}^{2b}$ such that
  $$\underline{c}^{2b} (\sum^+ \sigma_i \underline{c}^{2b_1+...+2b_d}/\underline{c}^{2b_i}p_i + \sum^- \sigma_i \underline{c}^{-1}\underline{c}^{2b_1+...+2b_d}/\underline{c}^{2b_i}p_i)= \underline{c}^{2b} fn$$
  We take $\underline{c}^{2b}$ to be the smallest such element. 
  
We take a wide sphere $P=(S \lra \cE^{-1}_{\cO_{\cF}} \otimes T)$ where $T$ is a $\bQ$ vector space generated by $t_i$ for $i=1,...,d$, $\mathrm{deg}(t_i)=k_i$ and $S$ is an $\cO_{\oF}$ submodule of $ \cE^{-1}_{\cO_{\cF}} \otimes T$ generated by $\sum_{i=1}^d \sigma_i \otimes t_i$ and $\underline{c}^{2b} f \otimes t_i$ if $t_i$ is $W$--fixed and $\underline{c}^{2b-1} f \otimes t_i$ if $W$ acts on $t_i$ by $-1$. The structure map is the inclusion. 
  
To finish the proof we set a map from $P$ to $X$ by sending $\sum_{i=1}^d \sigma_i \otimes t_i$ to $n$ and $\underline{c}^{2b} f \otimes t_i$  to $\underline{c}^{2b} \underline{c}^{2b_1+...+2b_d}/\underline{c}^{2b_i}p_i$ if $t_i$ is $W$--fixed and $\underline{c}^{2b-1} f \otimes t_i$ to $\underline{c}^{2b-1} \underline{c}^{2b_1+...+2b_d}/\underline{c}^{2b_i}p_i$ if $W$ acts on $t_i$ by $-1$. 
  
The elements $\underline{c}^{2b}$ and $f$ are needed to ensure that the relation between $n$ and the $p_i$'s after applying $\beta$ is replicated in the wide sphere.  
\end{proof}

\begin{prop}\label{prop:rightproper_AT}The injective model structure on $d\cA(SO(3),\cT)$ is proper.
\end{prop}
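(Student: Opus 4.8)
The plan is to prove that the injective model structure on $d\cA(SO(3),\cT)$ is both left and right proper. Left properness is immediate: since the cofibrations are the monomorphisms, every object is cofibrant, and in such a model category left properness is automatic (pushouts of weak equivalences along cofibrations are weak equivalences because the relevant long exact sequences in homology, obtained termwise from the abelian structure of $\cA(SO(3),\cT)$, force the conclusion). So the substance is right properness: one must show that the pullback of a weak equivalence (homology isomorphism) along a fibration is again a homology isomorphism.

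The strategy for right properness follows the pattern of \cite[Appendix A]{GreenleesS1}. The key is to identify the fibrations concretely enough to control pullbacks. Here is where the \emph{wide spheres} enter. By Proposition \ref{xxxx} there are enough wide spheres, so the set $\cP$ of isomorphism classes of wide spheres detects homology isomorphisms: a map $f$ in $d\cA(SO(3),\cT)$ is a weak equivalence if and only if $[\Sigma^n P, f]$ (equivalently, the map on $\hom_{\cA(SO(3),\cT)}(P,-)$-homology) is an isomorphism for all $P \in \cP$ and all $n$. The point is that each wide sphere $P$ is a small, essentially finitely-presented object, and applying $\hom_{\cA(SO(3),\cT)}(P,-)$ to a short exact sequence of objects in $d\cA(SO(3),\cT)$ yields — because $\cA(SO(3),\cT)$ has injective dimension $1$ and the wide spheres are built from the free modules $\cO_{\oF}$ localised along $\underline c$ — a long exact sequence with only a bounded $\mathrm{Ext}^1$-correction term. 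Thus $\hom(P,-)$ is ``almost exact'' and behaves like a homology functor.

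First I would recall from \cite[Appendix A]{GreenleesS1} the precise form of the fibrations in the injective model structure on a category of differential objects in a split abelian category of injective dimension $1$: a map $p: X \to Y$ is a fibration precisely when it is a degreewise split epimorphism whose kernel is a differential object which is ``injective-like'' in each degree (more precisely, the fibrant objects are those $X$ for which each $X_n$ is an injective object of $\cA(SO(3),\cT)$, and fibrations are the epimorphisms with fibrant-in-each-degree, actually \emph{$E$-injective} degreewise, kernels — I would import Greenlees' exact formulation). Given such a description, a pullback square
\[
\xymatrix{
P \ar[r] \ar[d] & X \ar[d]^{p} \\
Z \ar[r]^-{g} & Y
}
\]
with $p$ a fibration and $g$ a weak equivalence has $P \to Z$ again a degreewise split epi with the same kernel as $p$, so the pullback is computed degreewise and one gets a map of short exact sequences $0 \to \ker p \to P \to Z \to 0$ and $0 \to \ker p \to X \to Y \to 0$ in $d\cA(SO(3),\cT)$. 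Applying the ``almost-homology'' functors $\hom(\Sigma^n P', -)_*$ for $P' \in \cP$ and chasing the resulting ladder of long exact sequences, with the $\mathrm{Ext}^1$-terms controlled by injective dimension $1$ and the five lemma, shows that $P \to Z$ is a homology isomorphism because $g$ is.

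The main obstacle I expect is making the fibration description genuinely usable: one needs that pullbacks in $d\cA(SO(3),\cT)$ along fibrations are computed ``termwise'' in a way compatible with the structure maps $\beta : M \to \cE^{-1}\cO_{\cF}\otimes V$, and that the kernel of a fibration, restricted through the functors $e$ and $f$ of Definition \ref{def:e_and_f}, lands in injective $\cO_{\oF}[W]$-modules and injective $\bQ[W]$-modules so that the relevant $\mathrm{Ext}$-groups vanish in the degrees that matter. The splitting from Lemma \ref{lemmaForASS} (parity decomposition $X = X_+ \oplus X_-$) lets one treat the two summands independently, and the injective-dimension-$1$ bound from that same lemma is exactly what keeps the long exact sequence argument finite. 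Once these inputs are assembled, right properness — and hence properness — reduces to a routine diagram chase, so I would not grind through it but cite \cite[Appendix A]{GreenleesS1} for the template and indicate the one or two places where the $\cO_{\oF}[W]$-module structure (as opposed to the simpler $\cO_{\cF}$-module structure in the $O(2)$ case) requires the splitting of $\cO_{\oF}$-modules noted in the remark after Definition \ref{def:objectA(SO(3),T)}.
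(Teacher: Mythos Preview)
Your approach is plausible but substantially more elaborate than what the paper does, and it leans on a characterisation of fibrations that you do not actually verify.

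The paper's argument is much shorter. Left properness is handled exactly as you say. For right properness the paper uses the wide spheres in a different, more economical way: since there are enough wide spheres (Proposition~\ref{xxxx}), the maps $0\to D^n\otimes P$ for $P\in\cP$ are trivial cofibrations, and the right lifting property against these forces every fibration to be a \emph{surjection}. That is the only use made of wide spheres. Once fibrations are known to be surjective, right properness is reduced to the ambient module categories: in $\bQ[W]$--modules and in $\cO_{\oF}[W]$--modules, the pullback of a homology isomorphism along a surjection is again a homology isomorphism (same kernel, five lemma on the homology long exact sequences). No description of fibrant objects, no injective-dimension-$1$ $\mathrm{Ext}$ bookkeeping, and no parity splitting is needed.

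By contrast, you route the argument through a precise structural description of fibrations (``degreewise split epimorphisms with injective-like kernel'') imported from \cite[Appendix~A]{GreenleesS1}, and then through $\hom_{\cA}(\Sigma^n P',-)$-detection of weak equivalences and an $\mathrm{Ext}^1$-controlled ladder chase. This could be made to work, but the fibration characterisation you invoke is not quite what that appendix supplies in this generality and would itself require proof; you acknowledge this (``I would import Greenlees' exact formulation''), which is where the real work would hide. The paper sidesteps all of it: the only fact about fibrations it needs is surjectivity, and that follows in one line from the disc objects on wide spheres being trivial cofibrations. Your detour through detecting weak equivalences by wide spheres is also unnecessary: weak equivalences are \emph{defined} as homology isomorphisms, and homology is computed componentwise in the underlying module categories, so the reduction is immediate once you know fibrations are epimorphisms there.
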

\begin{proof}Since cofibrations are the monomorphism it is left proper. To show that it is right proper notice that among trivial cofibrations there are maps $0 \lra D^n \otimes P$, for any $P \in \cP$, where $D^n \otimes P$ denotes an object built from $P$ and $\Sigma P$ with the differential being the identity map from suspension of $P$ to $P$. Recall that $\cP$ denotes the set of isomorphism classes of wide spheres. Since there are enough wide spheres, the fibrations are in particular surjections. Right properness follows from the fact that in $\bQ[W]\endash\mathrm{mod}$ and $\cO_{\oF}[W]\endash\mathrm{mod}$ pullbacks along surjections of homology isomorphisms are homology isomorphisms.
\end{proof}

\begin{cor} The category $d\cA(SO(3),\cT)$ is a Grothendieck category.
\end{cor}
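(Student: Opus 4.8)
The plan is to recall the standard characterisation of Grothendieck categories and verify its hypotheses for $d\cA(SO(3),\cT)$ using results already established in the paper. Recall that an abelian category is \emph{Grothendieck} if it is cocomplete (has all small colimits), filtered colimits are exact, and it has a generator. So first I would invoke the proposition that all small limits and colimits exist in $\cA(SO(3),\cT)$, which immediately passes to the differential category $d\cA(SO(3),\cT)$ since differentials are extra structure preserved by the levelwise (co)limit constructions described there; this gives cocompleteness, and of course $d\cA(SO(3),\cT)$ is abelian.

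Next I would check exactness of filtered colimits. Since colimits in $\cA(SO(3),\cT)$ (and hence in $d\cA(SO(3),\cT)$) are computed on the underlying $\cO_{\oF}[W]$--modules $M$ and on the graded $\bQ[W]$--modules $V$ separately, and since filtered colimits are exact in each of the module categories $\cO_{\oF}[W]\endash\mathrm{mod}$ and $g\bQ[W]\endash\mathrm{mod}$ (these are ordinary categories of modules over a ring, possibly graded, where $\mathrm{AB5}$ is classical), a short exact sequence in $d\cA(SO(3),\cT)$ is a short exact sequence of the two underlying data compatibly, and a filtered colimit of such remains short exact in both components, hence in $d\cA(SO(3),\cT)$. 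The localisation functor $\cE^{-1}\cO_{\cF}\otimes_{\cO_{\oF}}(-)$ appearing in the structure map is exact and commutes with colimits, so the structure maps assemble correctly and condition ($\star$) is preserved under filtered colimits.

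Finally I would produce a generator. The natural candidate is the coproduct of (a skeleton of) the set of wide spheres $\cP$, or more precisely the set $\{D^n\otimes P, \Sigma^n P : P\in\cP, n\in\bZ\}$ — Proposition \ref{xxxx} shows there are enough wide spheres, i.e. every object of $\cA(SO(3),\cT)$ receives an epimorphism from a coproduct of wide spheres, and passing to the differential setting the objects built from wide spheres detect all objects of $d\cA(SO(3),\cT)$. A set of objects through which every object admits an epimorphism from a coproduct is a generating set, and a single generator is then obtained by taking the coproduct over this set; alternatively one can cite the fact that a cocomplete abelian category with $\mathrm{AB5}$ and a generating \emph{set} is Grothendieck. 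I expect the only mildly delicate point to be spelling out that ``enough wide spheres'' in the abelian category $\cA(SO(3),\cT)$ upgrades to a generating set in the differential category $d\cA(SO(3),\cT)$ — this is where one uses that $P\mapsto D^n\otimes P$ and $P\mapsto\Sigma^n P$ together generate all differential objects — but this is routine and parallel to the torus and $O(2)$ cases in \cite{GreenleesS1} and \cite{BGKS}, so the corollary follows formally.
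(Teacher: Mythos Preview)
Your approach is essentially the same as the paper's: exactness of directed colimits is reduced to the underlying module categories, and the generator is built from the wide spheres via Proposition~\ref{xxxx}. The paper's proof is a terse three sentences --- it takes $J=\bigoplus_{P\in\cP}P$ and asserts $\hom(J,-)$ is faithful.

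Your write-up is more careful in one place: you explicitly enlarge the generating set to include the disk objects $D^n\otimes P$ when passing from $\cA(SO(3),\cT)$ to $d\cA(SO(3),\cT)$. This is a genuine point the paper elides --- a map out of a wide sphere with zero differential lands in cycles, so wide spheres alone need not detect an arbitrary nonzero chain map; the disks fix this in the standard way. So your argument and the paper's are the same in spirit, but yours makes the ``upgrade to differentials'' step explicit rather than leaving it implicit.
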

\begin{proof} Directed colimits are exact in $d\cA(SO(3),\cT)$, since they are in $R$--modules, for any ring $R$. Thus it remains to show that there is a (categorical) generator. We take $J:=\bigoplus_{P\in \cP} P$ where  $\cP$ is the set of all wide spheres. By Proposition \ref{xxxx} $\hom(J,-)$ is faithful and thus $J$ is a categorical generator.
\end{proof}

Next we define a set of objects which will be generators for the homotopy category of $d\cA(SO(3),\cT)$ with the injective model structure. Before we were considering categorical generators, but from now on the meaning of the word \emph{generator} is as in Definition \ref{compactobj}. Recall that if $\beta : M \lra \cE^{-1}\cO_{\cF}\otimes V$ is an object in $d\cA(SO(3),\cT)$, then  $M$ is in particular a module over $\cO_{\oF}[W]$ (which is an infinite product over conjugacy classes of cyclic subgroups in $SO(3)$, see beginning of Section \ref{The category A(SO(3),c)}).

\begin{defn}\label{generatorsforA(SO(3),c)} We define a set $\cK$ in $d\cA(SO(3),\cT)$ to consist of all suspensions and desuspensions of the following objects:
\begin{itemize}
\item for the trivial subgroup $$\sigma_1:=(\bQ_1 \lra 0),$$
where $\bQ$ is at the place indexed by the trivial subgroup and all other factors are $0$, 
\item for every $H \in \oF$, $H\neq 1$ $$\sigma_{H}:=(\bQ[W]_{(H)} \lra 0),$$
where $\bQ[W]$ is at the place indexed by the conjugacy class of a subgroup $H$ and all other factors are $0$ and
\item for the torus
$$\sigma_T:=(M \lra \cE^{-1}\cO_{\cF} \otimes \bQ[W]),$$
where $e_1 M= \bQ[d] \oplus \Sigma^2 \bQ[d]$, $(1-e_1)M= (1-e_1)\cO_{\cF}.$
Here the map is the inclusion.
\end{itemize}
\end{defn}

It remains to show that the set of cells $\cK$ is a set of generators for the injective model structure on $d\cA(SO(3),\cT)$.

\begin{thm}\label{generatoryDlaA(SO(3)cyklicznej)}The set $\cK$ is a set of homotopically compact generators for the category $d\cA(SO(3),\cT)$ with the injective model structure.
\end{thm}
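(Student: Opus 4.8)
The plan is to establish two things: first, that each object in $\cK$ is homotopically compact, and second, that $\cK$ generates the homotopy category. For homotopical compactness, I would observe that the homotopy groups $[\sigma, X]^{d\cA}_*$ of any of these cells can be computed explicitly. For the torsion cells $\sigma_1$ and $\sigma_H$, which are of the form $(N \lra 0)$ with $N$ a finitely generated $\cO_{\oF}[W]$--module concentrated at a single conjugacy class, maps out of them are detected by the $M$--part alone; more precisely, using the injective resolution of length $1$ constructed in Lemma \ref{lemmaForASS} together with Proposition \ref{adjointsInASO3C}, I would compute $[\sigma_H, X]^{d\cA}_*$ as the homology of a short complex built from $\hom_{\cO_{\oF}[W]}(\bQ[W]_{(H)}, -)$ applied to an injective resolution of $X$, and this functor commutes with coproducts because $\bQ[W]_{(H)}$ is finitely generated over $\cO_{\oF}[W]$. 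For $\sigma_T$, the functor $e(-)$ of Definition \ref{def:e_and_f} is right adjoint to $U$ (the underlying graded $\bQ[W]$--module functor) by Proposition \ref{adjointsInASO3C}, and one checks that $\sigma_T$ is built from $e(\bQ[W])$-type data so that mapping out of it reduces to a computation with $\hom_{\bQ[W]}(\bQ[W],-) = U$, which again preserves coproducts. In each case the key input is that the relevant representing module is finitely generated, so that $\hom$ out of it commutes with filtered colimits and in particular with coproducts, and then the length-$1$ injective resolution controls the passage to the derived/homotopy level.

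For the generation statement, I would argue that the localising subcategory $\langle \cK \rangle$ of $\Ho(d\cA(SO(3),\cT))$ generated by $\cK$ is everything. The strategy is to show that any object $X = (\gamma: M \lra \cE^{-1}\cO_{\cF}\otimes V)$ with $[\sigma, X]^{d\cA}_* = 0$ for all $\sigma \in \cK$ is weakly trivial; equivalently, using that $d\cA(SO(3),\cT)$ is a nice stable model category, that the cells detect all of $\Ho$. Vanishing of $[\sigma_1, X]_*$ and $[\sigma_H, X]_*$ for all $H$ forces the homology of $e_{(H)}M$ to vanish for every conjugacy class $(H)$ — using that these cells detect the homology of each factor of $M$ as a module over $\bQ[d]$ or $\bQ[c_{(H)}][W]$, both of injective dimension $1$ — hence $H_*(M) = 0$ since $M = \bigoplus_{(H)} e_{(H)}M$ as modules over the product ring. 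Vanishing of $[\sigma_T, X]_*$ then forces $H_*(V) = 0$ via the condition ($\star$): the adjoint of $\gamma$ is an isomorphism after inverting Euler classes, so once the torsion part $M$ is homologically trivial, the cell $\sigma_T$ sees precisely $H_*(V)$. Since $X$ is an extension (in the derived sense) of $\cE^{-1}\cO_{\cF}\otimes V$ by the torsion kernel, having both pieces homologically trivial gives $H_*(X) = 0$, i.e. $X$ is weakly trivial. By the standard criterion (as invoked after Definition \ref{compactobj}, via \cite[Lemma 2.2.1]{SchwedeShipleyMorita}), a set of homotopically compact objects that jointly detect triviality is a set of generators.

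I would organise the proof as: (1) record the explicit formula for $[\sigma, X]^{d\cA}_*$ for each $\sigma\in\cK$ using the length-$1$ injective resolutions and the adjunctions of Proposition \ref{adjointsInASO3C}; (2) deduce homotopical compactness from finite generation of the representing objects; (3) prove detection of weak triviality factor by factor on $M$ and then on $V$ using ($\star$); (4) conclude via the standard argument that homotopically compact detecting objects generate.

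I expect the main obstacle to be step (3), specifically keeping careful track of how the cell $\sigma_T$ interacts with the condition ($\star$): because $M$ and $V$ are linked by the structure map $\gamma$ and the isomorphism is only after inverting Euler classes, one must be careful that the vanishing of $[\sigma_T, X]_*$ genuinely pins down $H_*(V)$ and not some mixture, and that the "extension" picture of $X$ in the triangulated homotopy category is the right one. The parity splitting from Lemma \ref{lemmaForASS} should help here, since it lets one treat even and odd degrees separately and match each summand of $\sigma_T$ (the $\bQ[d]$ and $\Sigma^2\bQ[d]$ pieces, and the $W$-action) against the corresponding eigenpiece of $V$. The torsion computations in steps (1)--(2) I expect to be routine given the injective-dimension-$1$ structure already established.
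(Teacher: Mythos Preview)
Your detection argument has a genuine gap at step~(3). The claim that ``$M = \bigoplus_{(H)} e_{(H)}M$ as modules over the product ring'' is false for a general object of $\cA(SO(3),\cT)$: the ring $\cO_{\oF}$ is an infinite \emph{product}, and modules over it need not split as the direct sum of their idempotent pieces. More fundamentally, $[\sigma_H, X]_*$ does not compute $e_{(H)}H_*(M)$ at all. A map $\sigma_H \to X$ in $\cA$ has zero $V$--component, so the $M$--component must land in $\ker\gamma$; hence $\hom_\cA(\sigma_H, H_*(X))$ only sees $e_{(H)}\ker H_*(\gamma)$, i.e.\ the torsion of $M$ at the spot $(H)$, not $e_{(H)}H_*(M)$ itself. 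As a concrete counterexample, take $X = S^0 = (\cO_{\oF} \hookrightarrow \cE^{-1}\cO_{\cF})$: the structure map is injective, so $[\sigma_H, S^0]_* = 0$ for every $H\in\oF$, yet $e_{(H)}H_*(M) = \bQ[c_{(H)}] \neq 0$. Thus the torsion cells alone cannot force $H_*(M)=0$, and your ``factor by factor on $M$, then $V$'' scheme breaks down.

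The paper's proof repairs exactly this by inserting the fibre sequence $\widehat{X} \to X \to e(V)$, where $\widehat{X}\simeq E\oF_+\otimes X$ is the torsion part. The point is that $H_*(\widehat{X})$ \emph{is} $\oF$--finite (so it really does decompose as $\bigoplus_{(H)} e_{(H)}H_*(\widehat{X})$), and one first transports the vanishing hypotheses from $X$ to $\widehat{X}$ using strong dualizability of the truncations $\alpha_H^n$. Only after $\widehat{X}$ is shown to be acyclic does one read off $[\sigma, e(V)]_* = 0$ from the fibre sequence and then use Proposition~\ref{adjointsInASO3C} (and the parity splitting you mention) to kill $H_*(V)$. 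Your instinct that ``$X$ is an extension of $\cE^{-1}\cO_\cF\otimes V$ by the torsion kernel'' is the right picture, but the argument must be run on $\widehat{X}$ rather than on $M$, and the passage from $[\sigma_H, X]_*=0$ to $[\sigma_H, \widehat{X}]_*=0$ is not formal---it is the substance of the proof.
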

\begin{proof} First note that $\sigma_T= (\cO_{\oF} \lra \cE^{-1}\cO_{\cF} \otimes \bQ) \oplus (N \lra \cE^{-1}\cO_{\cF}  \otimes \widetilde{\bQ})$, where $e_1 N= \Sigma^2 \bQ[d]$ and $(1-e_1)N=(1-e_1)\cO_\cF \otimes  \widetilde{\bQ}$ (here $\widetilde{\bQ}$ denotes $\bQ$ with the action of $w$ by $-1$). and both structure maps are inclusions.  We call the first summand $S^0$ and the second $\sigma_{T}^-$. Therefore it is enough to show that all suspensions and desuspensions of $\sigma_1,\ \sigma_{H}, \ \sigma_T^-,\ S^0$ for all $H \in \cF$, $H\neq 1$ form a set of generators. We will call this set $\cL$.  

All cells are homotopically compact since they are compact and fibrant replacement commutes with direct sums. 

We will show that if $[\sigma,X]^\cA_*=0$, for all $\sigma \in \cL$ then $H_*(X)=0$ and thus $X$ is weakly equivalent to $0$. By Lemma \ref{lemmaForASS}, \cite[Lemma 4.2.4]{GreenleesS1} and \cite[Theorem 3.8]{BarnesO(2)new} we can use the following Adams short exact sequence to calculate the maps in the derived category of $\cA= \cA(SO(3),\cT)$ from $X$ to $Y$ in $d\cA$:
\[
\xymatrix {
0 \ar[r] & \mathrm{Ext}_\cA(\Sigma H_*(X), H_*(Y))  \ar[r] & [X,Y]^\cA_* \ar[r] & {\hom}_\cA(H_*(X),H_*(Y)) \ar[r]& 0\\
}
\]

Observe that for every $X \in d\cA(SO(3),\cT)$, where $${X=(\gamma: P \lra \cE^{-1}\cO_\cF \otimes V)}$$ we have the following fibre sequence
\[
\xymatrix{
\widehat{X} \ar[r] & X \ar[r] &  e(V)
}
\]
where $e(V)$ is the functor described before Proposition \ref{adjointsInASO3C} and $\widehat{X}$ is the fibre of the map $X \lra e(V)$.

By definition, the structure map of $e(V)$ is an inclusion, and thus it is a torsion--free object.
To simplify the notation, let $$E\oF_+= (\Sigma^{-2} \bQ[d,d^{-1}]/\bQ[d] \lra 0) \oplus \bigoplus_{(H) \in \oF, H\neq 1}((\Sigma^{-2} \bQ[c_{(H)},c_{(H)}^{-1}]/\bQ[c_{(H)}])\lra 0) $$
We call the $H$--summand in the above formula $\alpha_H$. Then
$$\widehat{X}\simeq E\oF_+ \otimes X.$$

Now observe that every summand $\alpha_H$ in $E\oF_+$ is built as a sequential colimit from suspensions of $\alpha_H^n =(\bQ[c_{(H)}]/{c_{(H)}}^n \lra 0)$ and inclusions, or if it is the first summand $\alpha_1$ it is built as a sequential colimit of $\alpha_1^n=(\bQ[d]/d^n \lra 0)$ and inclusions,  and thus
 
$$[\sigma_K,\widehat{X}]^\cA_*=[\sigma_K,E\oF_+\otimes X]^\cA_*\cong [\sigma_K,  \bigoplus_{(H)} (\alpha_H \otimes X)]^\cA_*\cong \bigoplus_i [\sigma_K, \alpha_H \otimes X]^\cA_*$$
where the last isomorphism follows since $\sigma_K$ is a homotopically compact object. For all $H$, $\alpha^n_H$ is a strongly dualizable object (by \cite[Corollary 2.3.7 and Lemma 2.4.3]{GreenleesS1}), and thus we can proceed:
\begin{multline}[\sigma_K, \alpha_H \otimes X]^\cA_* \cong [\sigma_K, \colim_n \alpha^n_H \otimes X]^\cA_* \cong  \colim_i [\sigma_K,\hom(D(\alpha^n_H), X)]^\cA_* \cong \\ \colim_i [D(\alpha^n_H)\otimes\sigma_K, X]
\end{multline}
since $D(\alpha^n_H)\otimes \sigma_K =0$ if $K\neq H$ and every $D(\alpha^n_H)\otimes \sigma_H$ is finitely built from $\sigma_H$ and by assumption $[\sigma,X]=0$ for all $\sigma \in \cL$ we have that  $[D(\alpha^n_H)\otimes \sigma_H, X]=0$  and thus $[\sigma_H,\widehat{X}]^\cA_*=0$ for all $H \in \oF$.

Now take $X$ to be an object in $d\cA(SO(3),\cT)$ and assume that $[\sigma,X]^\cA_*=0$ for all $\sigma \in \cL$.
By the calculation above it follows that $[\sigma_H,\widehat{X}]^\cA_*=0$  for all $H \in \cF$.

From the Adams short exact sequence we get that 
$$ {\hom}_\cA(H_*(\sigma_H),H_*(\widehat{X}))=  {\hom}_\cA(\sigma_H,H_*(\widehat{X}))=e_{(H)}H_*(\widehat{X})=0$$

Since $H_*(\widehat{X}) = \bigoplus_{(H)\in \oF} e_H H_*(\widehat{X})$ we conclude that $\widehat{X}$ is weakly equivalent to $0$ and thus $[S^0,\widehat{X}]^\cA_*=0$ and $[\sigma_T^-,\widehat{X}]^\cA_*=0$. 

Now, by the fibre sequence and the fact that every fibre sequence induces a long exact sequence on $[E,-]$ we deduce that $[\sigma,e(V)]^\cA_*=0$ for every $\sigma \in \cL$. From the Adams short exact sequence it follows that
$$ {\hom}_\cA(H_*(S^0),H_*(e(V)))= {\hom}_\cA(S^0,H_*(e(V)))=H_*^+(e(V))=0$$
$$ {\hom}_\cA(H_*(\sigma_T^-),H_*(e(V)))=  {\hom}_\cA(\sigma_T^-,H_*(e(V))=H_*^-(e(V))=0$$
where $H_*^+(e(V))$ is the $W$ fixed part of $H_*(e(V))$ and $H_*^-(e(V))$ denotes $-1$ eigenspace. Since $H_*(e(V))=H_*^+(e(V))\oplus H_*^-(e(V))$ we get that $e(V)$ is weakly equivalent to 0.
Since the fibre sequence induces a long exact sequence in homology we conclude that $H_*(X)=0$ and thus $X$ is weakly equivalent to $0$ which finishes the proof.

\end{proof}

We finish this section by relating $d\cA(SO(3),\cT)$ and $d\cA(O(2),\tcT)$.

\begin{lem}\label{lem:before_cellFR}
The adjunction 
\[
\xymatrix@C=2pc{
\ d\cA(SO(3),\cT) \ 
\ar@<+1ex>[r]^{F}
&
\ d\cA(O(2),\tcT)\ 
\ar@<+0.5ex>[l]^{R}}
\]
is a Quillen pair when we equip both categories with the injective model structures. Recall that $F$ and $R$ are defined in the proof of Proposition \ref{prop:secondadjal_mod}.
\end{lem}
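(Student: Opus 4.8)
To show that $(F,R)$ is a Quillen pair for the injective model structures, it suffices to verify that the left adjoint $F$ preserves cofibrations and acyclic cofibrations. Since in both $d\cA(SO(3),\cT)$ and $d\cA(O(2),\tcT)$ the cofibrations are exactly the monomorphisms, the first point reduces to showing that $F$ preserves monomorphisms; and since the weak equivalences in both categories are the homology isomorphisms, the second point reduces to showing that $F$ preserves homology isomorphisms (then an acyclic cofibration, being a monomorphism which is also a homology isomorphism, goes to a monomorphism which is a homology isomorphism).

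\textbf{Reducing to exactness of $F$.} Both claims follow at once if $F$ is exact. Recall from the proof of Proposition~\ref{prop:secondadjal_mod} that on an object $(\gamma : M \lra \cE^{-1}\cO_{\cF}\otimes V)$ the functor $F$ is given by $\cO_{\cF}\otimes_{\cO_{\oF}}M$ in the module slot and is the identity on $V$. So the plan is: first, observe that a sequence in $\cA(SO(3),\cT)$ (or its differential version) is exact if and only if it is exact in the module slot and in the vector-space slot, because limits, colimits and hence kernels and cokernels are computed slot-wise (this is exactly how all (co)limits were constructed in the previous section). Second, the $V$-slot part of $F$ is the identity, hence exact. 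Third, for the $M$-slot I need that $\cO_{\cF}\otimes_{\cO_{\oF}}-$ is exact as a functor from $\cO_{\oF}[W]$--modules to $\cO_{\cF}[W]$--modules. This is where the structure of $\cO_{\oF}$ enters: by the remark following Definition~\ref{def:objectA(SO(3),T)} the category of $\cO_{\oF}$--modules splits as $\bQ[d]\text{--mod}\times(1-e_1)\cO_{\oF}\text{--mod}$, and on the first factor $\cO_{\cF}\otimes_{\cO_{\oF}}-$ is the base change $\bQ[c]\otimes_{\bQ[d]}-$, while on the second factor it is the identity (since $(1-e_1)\cO_{\cF} = (1-e_1)\cO_{\oF}$, the two rings agreeing away from the first factor, after extending by $W$). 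The identity is exact, and $\bQ[c]$ is free of rank $2$ over $\bQ[d]$ (with basis $1,c$), hence flat, so $\bQ[c]\otimes_{\bQ[d]}-$ is exact. Therefore $F$ is exact, which gives both required properties.

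\textbf{Main obstacle.} The only genuine point to be careful about is that $F$ really does land in $\cA(O(2),\tcT)$ and that it is compatible with differentials — but both of these were already checked in the proof of Proposition~\ref{prop:secondadjal_mod} (the condition $(\star)$ is preserved because $\cE^{-1}\cO_{\cF}\otimes_{\cO_{\cF}}\overline{\gamma}$ agrees with $\cE^{-1}\cO_{\cF}\otimes_{\cO_{\oF}}\gamma$), so nothing new is needed here. The remaining subtlety is purely bookkeeping: making sure the slot-wise description of exactness in $d\cA$ is valid, i.e. that the forgetful functor to (pairs of) module categories creates kernels and cokernels; this is immediate from the explicit construction of limits and colimits recalled earlier. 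So I expect the proof to be short: identify $F$ as exact via the splitting of $\cO_{\oF}$--modules and flatness of $\bQ[c]$ over $\bQ[d]$, then conclude that $F$ preserves monomorphisms and homology isomorphisms, hence cofibrations and acyclic cofibrations.
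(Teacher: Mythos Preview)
Your proposal is correct and follows exactly the paper's approach: the paper's proof is the single sentence ``The left adjoint is exact, so it preserves cofibrations (monomorphisms) and homology isomorphisms.'' You have simply spelled out the reason for exactness (the splitting of $\cO_{\oF}$--modules and the flatness of $\bQ[c]$ over $\bQ[d]$), which the paper leaves implicit.
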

\begin{proof} The left adjoint is exact, so it preserves cofibrations (monomorphisms) and homology isomorphisms. \end{proof}

\begin{thm}\label{adjunction1} 
The adjunction

\[
\xymatrix@C=2pc{
d\cA(SO(3),\cT) \ 
\ar@<+1ex>[r]^(.4){F}
&
F(\cK)\endash\mathrm{cell}\endash d\cA(O(2),\tcT)\ 
\ar@<+0.5ex>[l]^(.6){R}
}
\]

is a Quillen equivalence, where $\cK$ is given in Definition \ref{generatorsforA(SO(3),c)}. 
\end{thm}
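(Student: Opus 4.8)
The plan is to apply the cellularisation principle of \cite{GreenShipCell} (in the stable form of \cite{BarnesConstanze}) to the Quillen pair $(F,R)$ from Lemma \ref{lem:before_cellFR}. That principle says: given a Quillen adjunction $F \dashv R$ between stable, proper, cellular model categories, and a set of homotopically compact objects $\cK$ in the source such that the derived unit $X \to RF(X)$ is a weak equivalence for every $X \in \cK$, then $F \dashv R$ descends to a Quillen equivalence between $\cK\cell$ of the source and $F(\cK)\cell$ of the target. Since $d\cA(SO(3),\cT)$ and $d\cA(O(2),\tcT)$ are both proper (Proposition \ref{prop:rightproper_AT} and the corresponding statement for $O(2)$) and cellular, and $\cK$ consists of homotopically compact generators of $d\cA(SO(3),\cT)$ by Theorem \ref{generatoryDlaA(SO(3)cyklicznej)}, the cellularisation $\cK\cell\,d\cA(SO(3),\cT)$ is just $d\cA(SO(3),\cT)$ itself (cellularising at a set of generators changes nothing). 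So the theorem will follow once I check the unit condition on the generators.

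First I would recall explicitly the composite $RF$ on objects. For $X = (\gamma : M \to \cE^{-1}\cO_{\cF}\otimes V)$ in $d\cA(SO(3),\cT)$ we have $F(X) = (\overline{\gamma} : \cO_{\cF}\otimes_{\cO_{\oF}}M \to \cE^{-1}\cO_{\cF}\otimes V)$ and then $R(F(X))$ replaces the module by $(e_1(\cO_{\cF}\otimes_{\cO_{\oF}}M))^W \times (1-e_1)(\cO_{\cF}\otimes_{\cO_{\oF}}M)$. Away from the trivial subgroup, $\cO_{\cF}\otimes_{\cO_{\oF}}-$ and $(-)^W\times\mathrm{Id}$ are mutually inverse on the relevant factors, so the unit is an isomorphism there; the only content is at the $e_1$-factor. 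On $e_1$ the functor $F$ is $\bQ[c]\otimes_{\bQ[d]}-$ and $R$ is $(-)^W$, and the unit of that adjunction (Proposition \ref{prop:firstadjal_mod}) is the identity. Since $F$ and $R$ are both exact, no derived correction is needed: the derived unit is the underived unit. So I would verify the unit condition by direct inspection on the three families of generators $\sigma_1$, $\sigma_H$ ($H\neq 1$), and $\sigma_T$ (equivalently, by the splitting in the proof of Theorem \ref{generatoryDlaA(SO(3)cyklicznej)}, on $\sigma_1$, $\sigma_H$, $S^0$, $\sigma_T^-$). For $\sigma_H$ with $H\neq 1$ the module sits entirely off the $e_1$-factor and $F$, $R$ visibly return it unchanged; for $\sigma_1$ the module is $\bQ$ at the trivial-subgroup spot with $V=0$, and $F$ sends $\bQ$ (as a $\bQ[d]$-module) to $\bQ[c]\otimes_{\bQ[d]}\bQ$ while $R$ takes $W$-fixed points, returning $\bQ$ — one has to check this composite is the identity, which it is since $(\bQ[c]\otimes_{\bQ[d]}\bQ)^W = \bQ$; for $\sigma_T$ (or $S^0$ and $\sigma_T^-$) one carries out the analogous computation using $e_1 M = \bQ[d]\oplus\Sigma^2\bQ[d]$ and the fact that $\bQ[c]\otimes_{\bQ[d]}\bQ[d] = \bQ[c]$ with $(\bQ[c])^W = \bQ[d]$ and $(\Sigma^2\bQ[c])^W = \Sigma^2\bQ[c]^- $ matching the $\Sigma^2\bQ[d]$ summand appropriately.

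The main obstacle I expect is the bookkeeping at the trivial-subgroup factor: one must be careful that $\cO_{\cF}\otimes_{\cO_{\oF}}-$ followed by $(e_1(-))^W\times(1-e_1)(-)$ really reproduces the original $\cO_{\oF}[W]$-module structure, including the twisted $W$-action $wc_{(H)}=-c_{(H)}w$ and the degree shift built into the definition of $e(V)$ and of $\sigma_T$. In particular one needs $e_1\varepsilon_N$ (the counit of Proposition \ref{prop:firstadjal_mod}) to be an isomorphism on the modules appearing in $F(X)$ for $X\in\cK$ — these are finitely generated, so the argument already given in the proof of Proposition \ref{prop:secondadjal_mod} applies verbatim. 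Once the unit is checked on $\cK$, I would cite \cite[Theorem 2.1 or 2.2]{GreenShipCell} (the stable cellularisation principle) together with properness and cellularity to conclude that $(F,R)$ is a Quillen equivalence onto $F(\cK)\cell\,d\cA(O(2),\tcT)$, which is exactly the statement.
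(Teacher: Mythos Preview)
Your approach is the same as the paper's: apply the cellularisation principle of \cite{GreenShipCell} to the Quillen pair $(F,R)$ of Lemma~\ref{lem:before_cellFR}, using that $\cK$ already generates $d\cA(SO(3),\cT)$ so that cellularising the source changes nothing. Two remarks, one a simplification and one a gap.

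\emph{Simplification.} Your case-by-case verification of the derived unit on $\sigma_1$, $\sigma_H$, $\sigma_T$ is unnecessary. In the proof of Proposition~\ref{prop:secondadjal_mod} it is recorded that the categorical unit of $(F,R)$ is the \emph{identity} on every object. Since $R$ is exact it preserves all weak equivalences, so the derived unit coincides with the categorical unit and there is nothing further to compute. The paper's proof dispatches this step in one sentence by citing that fact.

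\emph{Gap.} Your statement of the cellularisation principle is incomplete: \cite[Theorem~2.1]{GreenShipCell} also requires that the images $F(\cK)$ be homotopically compact in the \emph{target} $d\cA(O(2),\tcT)$, not just that $\cK$ be compact in the source. You do not verify this. The paper supplies the missing argument: $R$ preserves coproducts (the component of $R$ at the trivial subgroup is $(-)^W$, which over $\bQ$ is naturally isomorphic to $W$-orbits and hence commutes with direct sums; the other components are identities). Together with compactness of $\cK$ and the adjunction isomorphism this gives compactness of $F(\cK)$. You should add this step.
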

\begin{proof} 
We cellularise the left hand side of the adjunction in Lemma \ref{lem:before_cellFR} at the set $\cK$ and the right one at $F(\cK)$. The left hand side is then just $d\cA(SO(3),\cT)$ by Theorem \ref{generatoryDlaA(SO(3)cyklicznej)}. 
Thus to use the cellularisation principle \cite[Theorem 2.1]{GreenShipCell} we need to proof that the derived unit is an isomorphism for every element of $\cK$. Since the right adjoint preserves all weak equivalences it is enough to show that the categorical unit is a weak equivalence. However, we already know that the unit of this adjunction is the identity (it was shown in the proof of Proposition \ref{prop:secondadjal_mod}).
It remains to show that the elements of  the set $F(\cK)$ are homotopically compact in $d\cA(O(2),\tcT)$ with the injective model structure. This follows from the fact that $R$ preserves coproducts (notice that one component of $R$ is taking $W$--fixed points and over $\bQ$ this is isomorphic to taking $W$--orbits. The other components of $R$ are identities). This finishes the proof. 
\end{proof}

In the next section we will compare the cells coming from the topological generators (see Proposition \ref{homComCyclicGen}), with the ones used for cellularising $d\cA(O(2),\tcT)$. For these two set of cells to agree we now change the set of cells used for cellularising $d\cA(O(2),\tcT)$. We introduce the following self--Quillen equivalence (which is also an equivalence of categories) of $d\cA(O(2),\tcT)$ with the injective model structure. Use the notation $\widetilde{\bQ}$ for the $\bQ[W]$--module $\bQ$ with non--trivial $W$--action. We denote by $-\otimes \widetilde{\bQ}$ a self-adjoint on $d\cA(O(2),\tcT)$ defined as follows.
$$-\otimes \widetilde{\bQ}(\beta : M \lra \cE^{-1}\cO_{\cF}\otimes V):=(\beta \otimes \widetilde{\bQ}: M\otimes \widetilde{\bQ} \lra \cE^{-1}\cO_{\cF}\otimes (V\otimes \widetilde{\bQ}).$$
Thus below, we use the notation $\widetilde{F}$ to denote $-\otimes \widetilde{\bQ} \circ F$ and $\widetilde{R}$ to denote $R\circ -\otimes \widetilde{\bQ}$. 
 
The final result of this section follows from Theorem \ref{adjunction1}.
\begin{cor}\label{koncoweCor} The following is a Quillen equivalence, where $\cK$ is given in Definition \ref{generatorsforA(SO(3),c)} and $d\cA(SO(3),\cT)$ is considered with the injective model structure.

\[
\xymatrix@C=2pc{
d\cA(SO(3),\cT) \ 
\ar@<+1ex>[r]^(.4){\widetilde{F}}
&
\widetilde{F}(\cK)\endash\mathrm{cell}\endash d\cA(O(2),\tcT)\ 
\ar@<+0.5ex>[l]^(.6){\widetilde{R}}
}
\]

\end{cor}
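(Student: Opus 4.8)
The plan is to deduce this corollary formally from Theorem \ref{adjunction1} by composing the Quillen equivalence there with a suitable self-Quillen equivalence of $d\cA(O(2),\tcT)$. First I would observe that $-\otimes \widetilde{\bQ}$ is its own inverse (up to natural isomorphism), since $\widetilde{\bQ}\otimes \widetilde{\bQ}\cong \bQ$ as $\bQ[W]$--modules, and that it is exact on $d\cA(O(2),\tcT)$: tensoring with a one-dimensional $\bQ[W]$--module preserves monomorphisms, homology isomorphisms, and the isomorphism condition $(\star)$ (since it commutes with $\cE^{-1}\cO_\cF\otimes_{\cO_\cF}-$). Hence $-\otimes\widetilde{\bQ}$ preserves cofibrations, trivial cofibrations, and all weak equivalences, so it is a Quillen self-equivalence of $d\cA(O(2),\tcT)$ with the injective model structure, being an equivalence of categories.

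Next I would cellularise. By Theorem \ref{adjunction1}, $F\colon d\cA(SO(3),\cT)\to F(\cK)\cell d\cA(O(2),\tcT)$ with right adjoint $R$ is a Quillen equivalence. Applying the equivalence of categories $-\otimes\widetilde{\bQ}$, it carries the cellularisation at $F(\cK)$ isomorphically onto the cellularisation at $\widetilde{F}(\cK) = (-\otimes\widetilde{\bQ})(F(\cK))$; more precisely, $-\otimes\widetilde{\bQ}\colon F(\cK)\cell d\cA(O(2),\tcT) \to \widetilde F(\cK)\cell d\cA(O(2),\tcT)$ is a Quillen equivalence (indeed an isomorphism of model categories after renaming cells), because a map is an $F(\cK)$--cellular equivalence if and only if its image under $-\otimes\widetilde{\bQ}$ is a $\widetilde F(\cK)$--cellular equivalence, as $[-\otimes\widetilde{\bQ}(P), -\otimes\widetilde{\bQ}(f)]^{*} \cong [P,f]^{*}$ naturally. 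Composing this with the Quillen equivalence of Theorem \ref{adjunction1} yields that
\[
\widetilde F = (-\otimes\widetilde{\bQ})\circ F \colon\ d\cA(SO(3),\cT)\ \longrightarrow\ \widetilde F(\cK)\cell d\cA(O(2),\tcT)
\]
is a Quillen equivalence with right adjoint $\widetilde R = R\circ(-\otimes\widetilde{\bQ})$, which is exactly the claim. I would also note in passing that $\widetilde F(\cK)\cell d\cA(O(2),\tcT)$ inherits right properness, cellularity and stability from $d\cA(O(2),\tcT)$ just as $F(\cK)\cell d\cA(O(2),\tcT)$ did, and that $\widetilde F(\cK)$ consists of homotopically compact objects since $F(\cK)$ does and $-\otimes\widetilde{\bQ}$ preserves coproducts.

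Since this corollary is a purely formal consequence of Theorem \ref{adjunction1}, there is no serious obstacle; the only point requiring a little care is the interaction of the functor $-\otimes\widetilde{\bQ}$ with the cellularisation — specifically checking that it sends $F(\cK)$--cellular equivalences to $\widetilde F(\cK)$--cellular equivalences and $F(\cK)$--cellular objects to $\widetilde F(\cK)$--cellular objects — but this follows immediately from the natural isomorphism $[-\otimes\widetilde{\bQ}(A), -\otimes\widetilde{\bQ}(B)]^\cA_* \cong [A,B]^\cA_*$ together with the fact that $-\otimes\widetilde{\bQ}$ is an equivalence of categories commuting with suspension and coproducts.
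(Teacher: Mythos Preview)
Your proposal is correct and follows exactly the approach the paper intends: the paper states just before the corollary that $-\otimes\widetilde{\bQ}$ is a self-Quillen equivalence (and equivalence of categories) of $d\cA(O(2),\tcT)$, defines $\widetilde{F}=(-\otimes\widetilde{\bQ})\circ F$ and $\widetilde{R}=R\circ(-\otimes\widetilde{\bQ})$, and then asserts that the corollary follows from Theorem~\ref{adjunction1}. Your argument simply spells out the details the paper leaves implicit, in particular the compatibility of $-\otimes\widetilde{\bQ}$ with the cellularisation.
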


\begin{rmk}\label{rmk:cells_from_tildaFK}Let us calculate the cells from $\widetilde{F}(\cK)$ (ignoring suspensions as they work in the same way in both categories): 
$$\widetilde{F}(\sigma_1)=\widetilde{F}(\bQ_1 \lra 0)= \widetilde{\bQ}\oplus \Sigma^{-2}\bQ \lra 0$$ where $c$ sends $\widetilde{\bQ}$ to $\bQ$ (both copies of $\bQ$ are in the place corresponding to the trivial subgroup)  and 
$$\widetilde{F}(\sigma_{(H)})=\widetilde{F}(\bQ[W]_{(H)} \lra 0)=\bQ[W]_H \lra 0$$ where the left $\bQ[W]$  is in the place corresponding to $(H)$ and the resulting $\bQ[W]$ is in the place corresponding to $H$. This holds for all $(H) \in \oF$ except for $H=1$. For the torus
$$\widetilde{F}(\sigma_{(T)})=\widetilde{F}(M \lra \cE^{-1}\cO_{\cF} \otimes \bQ[W])= \Sigma^2 \widetilde{\bQ} + \cO_\cF \otimes \bQ[W] \lra  \cE^{-1}\cO_{\cF} \otimes \bQ[W]$$ where $c$ acts on $ \widetilde{\bQ}$ in degree $2$ ($\widetilde{\bQ}$ is in the place corresponding to the trivial subgroup) by sending it to $\bQ \subseteq \bQ[W]$ in degree $0$ and the map is the inclusion.
\end{rmk}

%%%%%%%%%%%%%%%%%%%%%%%%%%%%%%%%%%%%%%%%%%%%%%%%
\subsection{Restriction to the toral part of rational  $O(2)$--spectra}\label{section_toral_reduction_to_O2} 

The idea for the comparison is to restrict the toral part of rational $SO(3)$--spectra to the toral part of rational $O(2)$--spectra using the functor $i^\ast$ as a left adjoint. Recall that the adjunction ($SO(3)_+\wedge _{O(2)}-, i^\ast$) is not a Quillen pair for the  model categories localised at the idempotents corresponding to the toral parts, see Proposition \ref{notQErest_ind}. 

We use the proof from \cite{BarnesO(2)new} giving an algebraic model for the toral part of rational $O(2)$--spectra, cellularising every step of the zig--zag of Quillen equivalences presented there. This way we obtain an algebraic model for the toral part of rational $O(2)$--spectra cellularised at the derived images of generators for $L_{e_{\cT}S_\bQ}(SO(3)\endash\mathrm{Sp})$. This gives an algebraic model, however not very explicit. We finish this section by simplifying this category in Theorem \ref{koncowePorownanieModeli} and showing that it is Quillen equivalent to $d\cA(SO(3), \cT)$ with the injective model structure.

We start by establishing generators for the toral part of rational $SO(3)$--spectra. We used the notation $\cK$ in Definition \ref{generatorsforA(SO(3),c)} for the generators on the algebraic side. We will use the notation $K$ for the generators on the topological side. We will end this section showing that the derived images of the topological generators $\mathrm{im}(K)$ are precisely the algebraic generators $\cK$ in $d\cA(SO(3), \cT)$.

\begin{prop}\label{homComCyclicGen}A set $K$ consisting of all suspensions and desuspensions of one $SO(3)$--spectrum $$\sigma_{n}=SO(3)_+\wedge_{C_n}e_{C_n}S^0$$ for every natural $n>0$ and an $SO(3)$--spectrum $SO(3)/SO(2)_+$ is a set of cofibrant homotopically compact generators for the category $L_{e_{\cT}S_\bQ}(SO(3)\endash\mathrm{Sp})$. 
\end{prop}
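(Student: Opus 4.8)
The plan is to verify the two standard properties required of a generating set in a compactly generated stable model category: that the listed objects are cofibrant and homotopically compact, and that they detect triviality (equivalently, the smallest localising subcategory containing them is everything). The category $L_{e_{\cT}S_\bQ}(SO(3)\endash\mathrm{Sp})$ is a left Bousfield localisation of $SO(3)\endash\mathrm{Sp}$ at a cofibrant object built from an idempotent, so by the discussion following Theorem \ref{thm:LBLatE_MM} the localisation is smashing; in particular the fibrant replacement preserves coproducts, so homotopical compactness in the localised category reduces to homotopical compactness before localisation. The orbit spectra $SO(3)/C_n{}_+ = SO(3)_+\wedge_{C_n}S^0$ and $SO(3)/SO(2)_+$ are finite cell spectra hence compact, and smashing with the cofibrant idempotent $e_{C_n}S^0$ preserves compactness (again since localisation is smashing); this handles cofibrancy and compactness.

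The substantive point is generation. First I would recall that $\{SO(3)/H_+ : H\le SO(3)\}$, or even just one $SO(3)/H_+$ for each conjugacy class, is a set of generators for $SO(3)\endash\mathrm{Sp}$ in the rational stable category, and that after localising at $e_{\cT}S_\bQ$ the only conjugacy classes that survive are the toral ones, i.e. the subconjugates of the maximal torus: these are $SO(2)$ itself and the finite cyclic groups $C_n$, $n\ge 1$ (with $SO(3)/1_+$ built from the others). Concretely, for a non-toral $H$ the geometric fixed points $\Phi^H(e_{\cT}S_\bQ)$ vanish, so $e_{\cT}S_\bQ\wedge SO(3)/H_+$ is trivial in the localised category, which shows that after localisation such orbits are redundant. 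So it suffices to show that an object $X$ of $L_{e_{\cT}S_\bQ}(SO(3)\endash\mathrm{Sp})$ with $[\sigma_n, X]^{SO(3)}_\ast = 0$ for all $n$ and $[SO(3)/SO(2)_+, X]^{SO(3)}_\ast = 0$ is weakly trivial. Using the adjunction $SO(3)_+\wedge_{C_n}(-)\dashv \res^{SO(3)}_{C_n}$ one rewrites $[\sigma_n,X]^{SO(3)}_\ast \cong [e_{C_n}S^0, \res^{SO(3)}_{C_n}X]^{C_n}_\ast$, and this computes (a summand of) the $C_n$--equivariant homotopy of $X$ detected by the idempotent $e_{C_n}$; together with the $SO(2)$--orbit these detect all the toral geometric isotropy of $X$. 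Since $X$ lies in the toral localisation its geometric isotropy is concentrated among the toral subgroups, so vanishing of all these groups forces $\Phi^H X \simeq 0$ for every subgroup $H$, hence $X\simeq \ast$.

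The main obstacle is the bookkeeping in that last step: one must be careful that the idempotents $e_{C_n}$ in the rational Burnside ring $\A(C_n)_\bQ$ really do capture the part of $\pi^{C_n}_\ast X$ corresponding to toral isotropy of the ambient $SO(3)$--spectrum, and that these pieces, as $n$ ranges over all naturals, together with the single orbit $SO(3)/SO(2)_+$, suffice to see all geometric fixed points $\Phi^{C_n}X$ and $\Phi^{SO(2)}X$. This is where one uses that in the toral part there is exactly one conjugacy class of subgroups of each finite order (as recorded in Section \ref{subgroups}), so there is no ambiguity in indexing the cyclic generators by $n$, and that the maximal torus contributes the one extra generator $SO(3)/SO(2)_+$. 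I would assemble this from the standard isotropy/geometric-fixed-point detection criterion for generation in $G\endash\mathrm{Sp}$ (a map is a weak equivalence iff it induces equivalences on all $\Phi^H$), restricted to the toral family by the localisation, exactly as in the analogous arguments for $SO(2)$ in \cite{GreenleesS1} and for $O(2)$ in \cite{BarnesO(2)new}, which the proof should simply cite rather than reprove.
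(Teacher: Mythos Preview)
Your argument is essentially correct, but it takes a more laborious route than the paper's. You argue generation by directly verifying the detection criterion: you unwind $[\sigma_n,X]^{SO(3)}_*\cong e_{C_n}\pi^{C_n}_*(X)$ via the induction--restriction adjunction and then appeal to geometric fixed points to conclude $X\simeq *$. As you yourself flag, the bookkeeping in that last step (that the $e_{C_n}$--pieces together with $\pi^{SO(2)}_*$ really see all of $\Phi^H X$ for toral $H$) is not fully written out and would need a careful tom Dieck splitting argument.

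The paper avoids all of this. It first observes that the set $L=\{SO(3)/C_{n+}\}_{n\ge 1}\cup\{SO(3)/SO(2)_+\}$ is already a set of homotopically compact generators for $L_{e_{\cT}S_\bQ}(SO(3)\endash\mathrm{Sp})$, citing \cite[Chapter IV, Proposition 6.7]{MandellMay} directly (this packages the ``orbit spectra generate, and only toral orbits survive after localisation'' step you sketch). Then it uses the explicit wedge decomposition
\[
SO(3)/C_{n+}\;\simeq\;\bigvee_{C_m\subseteq C_n}\sigma_m,
\]
a consequence of \cite[Lemma 2.1.5]{GreenleesS1}, to conclude immediately that the $\sigma_n$'s generate the same localising subcategory as the $SO(3)/C_{n+}$'s. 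This single splitting identity replaces your entire geometric-fixed-point analysis: each $\sigma_n$ is a retract of an orbit spectrum (hence compact and cofibrant), and each orbit spectrum is a finite wedge of $\sigma_m$'s (hence generation transfers). Your approach would eventually reprove this splitting implicitly; the paper's shortcut is simply to quote it.
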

\begin{proof} 
First consider a set $L$ consisting of all suspensions and desuspensions of one $SO(3)$--spectrum $SO(3)/C_{n+}$ for every natural $n>0$ and an $SO(3)$--spectrum $SO(3)/SO(2)_+$. 
All objects in $L$ are homotopically compact in $L_{e_{\cT}S_\bQ}(SO(3)\endash\mathrm{Sp})$ since they are in $SO(3)\endash\mathrm{Sp}$ and fibrant replacement in $L_{e_{\cT}S_\bQ}(SO(3)\endash\mathrm{Sp})$ commutes with coproducts.
This is a set of generators for $L_{e_{\cT}S_\bQ}(SO(3)\endash\mathrm{Sp})$ by \cite[Chapter IV, Proposition 6.7]{MandellMay}. 
Since  $$SO(3)/{C_n}_+=\bigvee_{C_m \subseteq C_n}\sigma_m $$ which is a consequence of \cite[Lemma 2.1.5]{GreenleesS1}, the set $K$ is a set of homotopically compact generators for $L_{e_{\cT}S_\bQ}(SO(3)\endash\mathrm{Sp})$.
\end{proof}

Next we restrict to the toral part of rational $O(2)$--spectra.

\begin{thm}\label{CyclicRestrictionEquivalence}
The following adjunction
\[
\xymatrix{
i^\ast \ :\ L_{e_{\cT}S_\bQ}(SO(3)\endash\mathrm{Sp})\  \ar@<+1ex>[r] & \ i^\ast(K)\endash\mathrm{cell}\endash L_{e_{\tcT}S_\bQ}(O(2)\endash\mathrm{Sp}) \ : F_{O(2)}(SO(3)_+,-) \ar@<+0.5ex>[l]
}
\]
is a Quillen equivalence, where the idempotent on the right hand side corresponds to the family of all cyclic subgroups of $O(2)$.
\end{thm}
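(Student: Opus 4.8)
The plan is to invoke the cellularisation principle of \cite[Theorem 2.1]{GreenShipCell}, exactly as in the earlier comparisons in this paper. The starting point is the Quillen adjunction $(i^\ast, F_{O(2)}(SO(3)_+,-))$ between $L_{e_{\cT}S_\bQ}(SO(3)\endash\mathrm{Sp})$ and $L_{e_{\tcT}S_\bQ}(O(2)\endash\mathrm{Sp})$ established in Proposition \ref{cyclicAdj}. This is not a Quillen equivalence, but the cellularisation principle says that if we cellularise the left-hand category at a set of cofibrant homotopically compact generators $K$ and the right-hand category at the derived images $Li^\ast(K)=i^\ast(K)$ (the left adjoint is already homotopical on cofibrant objects, and the generators in Proposition \ref{homComCyclicGen} are cofibrant), then the resulting adjunction is a Quillen equivalence \emph{provided} the derived unit $X\to F_{O(2)}(SO(3)_+, (i^\ast X)_{\mathrm{fib}})$ is a weak equivalence for each $X\in K$. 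Since the left-hand side cellularised at its own set of generators $K$ is just $L_{e_{\cT}S_\bQ}(SO(3)\endash\mathrm{Sp})$ again (the cells generate the whole category), this will give precisely the stated Quillen equivalence.

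So the real content is checking the derived unit condition on the two kinds of generators in $K$: the spectra $\sigma_n = SO(3)_+\wedge_{C_n} e_{C_n}S^0$ and the orbit spectrum $SO(3)/SO(2)_+$. First I would reduce the unit computation to a computation of homotopy groups, i.e. of $[SO(3)/J_+, -]^{SO(3)}_\ast$ for toral subgroups $J$, using that the generators detect weak equivalences in the toral part. For $SO(3)/SO(2)_+$, the key point is that restriction and coinduction along $i\colon O(2)\hookrightarrow SO(3)$ interact well with the torus orbit: $i^\ast(SO(3)/SO(2)_+)$ should be computable as an $O(2)$-CW spectrum built from $O(2)/SO(2)_+$ and lower toral cells, and the Wirthm\"uller-type identification of $F_{O(2)}(SO(3)_+,-)$ with $SO(3)_+\wedge_{O(2)}-$ (up to a representation shift, since the relevant tangent representation is even-dimensional here) lets one identify the target. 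For $\sigma_n$, one uses the double-coset / Mackey-type decomposition of $i^\ast(SO(3)_+\wedge_{C_n}e_{C_n}S^0)$ into pieces indexed by $O(2)$-conjugacy classes of subgroups of $SO(3)$ subconjugate to both $O(2)$ and $C_n$; after rationalising and applying the toral idempotent, only the cyclic pieces survive, and the $e_{C_n}$ idempotent pins the computation down to a single summand, making the unit an equivalence. This mirrors the analogous calculations in \cite{BarnesO(2)new} and \cite{KedziorekExceptional}.

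The main obstacle I expect is the bookkeeping for the $SO(3)/SO(2)_+$ generator: because $SO(2)$ is not normal in $SO(3)$, the restriction $i^\ast(SO(3)/SO(2)_+)$ is \emph{not} simply $O(2)/SO(2)_+$, and one must carefully analyse the $O(2)$-fixed-point data (or geometric isotropy) of $i^\ast(SO(3)/SO(2)_+)$, then check that coinducing back up and comparing with the unit map $SO(3)/SO(2)_+ \to F_{O(2)}(SO(3)_+, i^\ast(SO(3)/SO(2)_+))$ yields an isomorphism on all relevant homotopy groups after localising at $e_{\cT}S_\bQ$. The Weyl group $W_{SO(3)}SO(2) = O(2)/SO(2)$ of order $2$ enters here, and one must verify that the rational calculation is insensitive to it in the right way (the free $\Sigma_2$-action on the normal sphere, rationally, contributes trivially). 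I would handle this by passing to geometric fixed points $\Phi^J$ for each toral $J$ and checking the unit is a $\Phi^J$-equivalence for all such $J$, which by the classification of toral subgroups reduces to finitely many explicit (co)induction computations. The $\sigma_n$ case I expect to be routine once the double-coset formula is set up, exactly as in the cited references.
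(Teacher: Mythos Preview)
Your overall strategy---apply the cellularisation principle of \cite[Theorem 2.1]{GreenShipCell} to the Quillen adjunction of Proposition \ref{cyclicAdj}, observe that cellularising the left-hand side at its own generators $K$ changes nothing, and then verify the derived unit on each generator---is exactly the route the paper takes. The difference lies entirely in how the derived unit is checked.

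The paper does not compute anything explicitly. Instead it observes that the derived unit being a weak equivalence on each cell is equivalent to the derived functor $Li^\ast$ being fully faithful on the localising subcategory generated by $K$, and then cites \cite[Theorem 6.1]{GreenleesSO3}: for $X\cong e_{\cT}X$, the map
\[
Li^\ast\colon [X,Y]^{SO(3)} \lra e_{\tcT}[i^\ast X, i^\ast Y]^{O(2)}
\]
is an isomorphism. This single black-box input dispatches both the $\sigma_n$ and the $SO(3)/SO(2)_+$ cases at once, with no double-coset bookkeeping, no Wirthm\"uller identification, and no geometric-fixed-point case analysis. Your direct approach would presumably succeed, but it is substantially more laborious, and the $SO(3)/SO(2)_+$ case you flag as the obstacle is genuinely delicate to do by hand; the paper simply sidesteps it.

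Two smaller points. First, you omit one hypothesis of the cellularisation principle: one must check that the images $i^\ast(K)$ are themselves homotopically compact in $L_{e_{\tcT}S_\bQ}(O(2)\endash\mathrm{Sp})$. The paper handles this by noting that $SO(3)/C_{n}$ and $SO(3)/SO(2)$ are smooth compact $O(2)$-manifolds, hence finite $O(2)$-CW complexes by \cite{Illman}, so their suspension spectra are homotopically compact. Second, be cautious invoking a Wirthm\"uller identification of $F_{O(2)}(SO(3)_+,-)$ with $SO(3)_+\wedge_{O(2)}-$ in this localised setting: Proposition \ref{notQErest_ind} shows that $(SO(3)_+\wedge_{O(2)}-,\, i^\ast)$ is \emph{not} a Quillen pair between the toral localisations, so any such comparison must be made carefully at the level of homotopy categories rather than model categories.
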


\begin{proof} The fact that this is a Quillen adjunction follows from Proposition \ref{cyclicAdj} and the cellularisation principle \cite[Theorem 2.1]{GreenShipCell} for $K$ and $i^\ast(K)$. Notice that since $K$ was a set of generators for the category $L_{e_{\cT}S_\bQ}(SO(3)\endash\mathrm{Sp})$, the cellularisation with respect to $K$ will not change this model structure.

All cells from $K$ are homotopically compact and cofibrant by Proposition \ref{homComCyclicGen}. We need to check that their images with respect to $i^\ast$ are homotopically compact in  $L_{e_{\tcT}S_\bQ}(O(2)\endash\mathrm{Sp})$, i.e. suspension spectra of $SO(3)/C_{n+}$ for all $n$ and $SO(3)/SO(2)_+$ as toral $O(2)$--spectra. It is enough to show that they are homotopically compact as $O(2)$--spectra, which follows from the fact that a smooth, compact $G$-manifold admits a structure of a finite $G$--CW complex (\cite{Illman}) and a suspension spectrum of a finite $G$-CW complex is homotopically compact.  It thus follows that the images of the summands $\sigma_n$ are also homotopically compact and cofibrant in $L_{e_{\tcT}S_\bQ}(O(2)\endash\mathrm{Sp})$.

It remains to show that the components of the derived unit maps at generators are weak equivalences. For this, it is enough to check the induced map on the level of homotopy categories. This is equivalent to showing that the derived functor $Li^\ast$ is an isomorphism on hom--sets. This holds by \cite[Theorem 6.1]{GreenleesSO3} which states that if $X\cong e_{\cT}X$ then $Li^\ast$ is an isomorphism:
$$[X,Y]^{SO(3)}\lra e_{\tcT}[i^*X,i^*Y]^{O(2)}$$
which implies that $$Li^*: [X,Y]^{L_{e_{\cT}}SO(3)}\cong[e_{\cT}X,e_{\cT}Y]^{SO(3)} \lra e_{\tcT}[i^*(e_{\cT}X),i^*(e_{\cT}Y)]^{O(2)}\cong [i^*X,i^*Y]^{L_{e_{\tcT}}O(2)}$$ is an isomorphism, where the superscript ${L_{e_{\cT}}SO(3)}$ was used to mean the homotopy category of $L_{e_{\cT}S_\bQ}(SO(3)\endash\mathrm{Sp})$. Similarly, the superscript $L_{e_{\tcT}}O(2)$ was used to mean the homotopy category of $L_{e_{\tcT}S_\bQ}(O(2)\endash\mathrm{Sp})$.
Thus the adjunction is a Quillen equivalence.
 
\end{proof}

\begin{rmk}\label{rmk:generalGtoN}
The result above generalises to any compact Lie group $G$. The restriction--coinduction adjunction is a Quillen equivalence between the toral part of rational $G$--spectra and a certain cellularisation of the toral part of rational $N$--spectra, where $N$ is the normaliser of the maximal torus in $G$. This is used in \cite{BGK} to provide an algebraic model for the toral part of rational $G$--spectra for any compact Lie group $G$.
\end{rmk}

Since the Quillen equivalence above provides a link between the toral part of rational  $SO(3)$--spectra and the toral part of rational  $O(2)$--spectra we use the result of \cite{BarnesO(2)new}.
\begin{thm} \cite[Corollary 4.22]{BarnesO(2)new}
There is a zig-zag of Quillen equivalences between $L_{e_{\tcT}S_\bQ}(O(2)\endash\mathrm{Sp})$ and $d\cA(O(2),\tcT)$, where $d\cA(O(2),\tcT)$ is considered with the dualizable model structure.
\end{thm}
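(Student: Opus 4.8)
This is \cite[Corollary 4.22]{BarnesO(2)new}; below I sketch the argument one would give. The plan is to take the comparison for the circle and make it equivariant for the Weyl group $W=O(2)/SO(2)$, a group of order $2$. Recall from \cite{BGKS} that there is a zig--zag of symmetric monoidal Quillen equivalences between rational $SO(2)$--spectra and $d\cA(SO(2))$. Since $SO(2)$ is normal in $O(2)$ with quotient $W$, and the geometric isotropy of the toral part is concentrated on subgroups of $SO(2)$, the first step is to make precise the slogan that $L_{e_{\tcT}S_\bQ}(O(2)\endash\mathrm{Sp})$ is controlled by rational $SO(2)$--spectra equipped with a compatible $W$--action coming from the $O(2)$--conjugation action on $SO(2)$, i.e. orientation reversal.

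Next I would transport the $W$--action through each stage of the $SO(2)$--zig--zag. The key observation is that every functor in the $SO(2)$--comparison is assembled from constructions that are natural enough to carry the $W$--action --- smashing with cofibrant objects, change of rings along maps of commutative ring spectra, left Bousfield localisations at idempotents, and cellularisations at explicit sets of cells. For each step one checks that the lifted model structure on the category of $W$--objects exists (cofibrant generation, properness and cellularity as required for the next step), which is unproblematic rationally since $|W|$ is invertible; that the relevant adjoint stays left/right Quillen after passing to $W$--objects; and that at the cellularisation steps one cellularises at the $W$--object built from the original cells. It follows that each Quillen equivalence persists, and keeping the monoidal model structures of \cite{BGKS} throughout is what produces the \emph{dualizable} model structure on the target, so that the composite is a zig--zag of (symmetric) monoidal Quillen equivalences.

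Finally I would identify the terminal category of the lifted zig--zag with $d\cA(O(2),\tcT)$: a $W$--object in $d\cA(SO(2))$ is precisely a triple $(M,V,\beta)$ with $M$ an $\cO_{\cF}$--module in $\Ch(\bQ[W])$, $V$ an object of $\Ch(\bQ[W])$, and $\beta$ an $\cO_{\cF}$--module map into $\cE^{-1}\cO_{\cF}\otimes V$ satisfying condition $(\star)$ --- which is precisely how $d\cA(O(2),\tcT)$ was defined above. Here one verifies that $W$ acts on each Euler class $c_H$ by $-1$, since the nontrivial element of $W$ reverses the orientation of $SO(2)$ and hence negates the Euler class of the corresponding $2$--dimensional representation, exactly matching Definition \ref{euler_classes1}.

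The main obstacle is the bookkeeping rather than any single deep point: one must verify that the whole $SO(2)$--zig--zag survives the equivariant enhancement, i.e. that at every stage the $W$--object model structure exists with the properties (proper, cellular, equipped with the right homotopically compact generators) needed for the next localisation or cellularisation, that fixed points and orbits behave as expected on cofibrant and fibrant objects, and that the monoidal (dualizable) structure is preserved so that the final identification lands in the claimed model category. Carrying these checks out carefully across the localisation and cellularisation steps is the real content, and is precisely what is done in \cite[Section 4]{BarnesO(2)new}.
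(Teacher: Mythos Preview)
Your proposal is appropriate: in the paper this theorem carries no proof at all --- it is simply a citation of \cite[Corollary 4.22]{BarnesO(2)new}, and the surrounding text only records the one-line summary that \cite{BarnesO(2)new} ``lifts this comparison [from \cite{BGKS}] to one compatible with the $W=O(2)/SO(2)$--action.'' Your sketch expands exactly that summary into the expected outline of the argument in the cited reference (take the $SO(2)$ zig--zag, pass to $W$--objects at each stage, check model structures and Quillen equivalences survive, identify the endpoint with $d\cA(O(2),\tcT)$), so there is nothing to compare beyond noting that you have supplied more detail than the paper does.
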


To provide an algebraic model for rational  $SO(3)$--spectra we need to cellularise every step of the zig-zag from  \cite[Section 4]{BarnesO(2)new} with respect to derived images of $i^*(K)$ from  Theorem \ref{CyclicRestrictionEquivalence}. Cellularisation preserves Quillen equivalences and gives the following result.

\begin{thm}\label{summary_cyclic_SO3}There is a zig-zag of Quillen equivalences between $L_{e_{\cT}S_\bQ}(SO(3)\endash\mathrm{Sp})$ and $\mathrm{im}(K)\endash\mathrm{cell}\endash d\cA(O(2),\tcT)$, where $d\cA(O(2),\tcT)$ is considered with the dualizable model structure. Here $\mathrm{im}(K)$ denotes the derived image under the zig-zag of Quillen equivalences described in \cite[Section 4]{BarnesO(2)new} of the set of cells $K$ described in Proposition \ref{homComCyclicGen}. 
\end{thm}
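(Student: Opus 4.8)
The plan is to cellularise the zig--zag of Quillen equivalences $L_{e_{\tcT}S_\bQ}(O(2)\endash\mathrm{Sp})\simeq_Q d\cA(O(2),\tcT)$ of \cite[Corollary 4.22]{BarnesO(2)new} one step at a time, and then splice the result onto Theorem \ref{CyclicRestrictionEquivalence}. The only extra ingredient is the \emph{cellularisation principle} \cite[Theorem 2.1]{GreenShipCell} in the following guise: if $F:\cM\rightleftarrows\cN:G$ is a Quillen equivalence of right proper, cellular, stable model categories and $Q$ is a set of cofibrant objects of $\cM$, then the cellularisations $Q\cell\cM$ and $\mathbb L F(Q)\cell\cN$ exist (\cite[Theorem 5.1.1]{Hirschhorn}, \cite[Theorem 5.9]{BarnesConstanze}) and $(F,G)$ descends to a Quillen equivalence between them; the hypothesis of the cellularisation principle — that the derived unit be a weak equivalence on the cells of $Q$ — holds automatically since $(F,G)$ is already a Quillen equivalence on the whole category. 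Moreover, if the objects of $Q$ are cofibrant and homotopically compact, so are those of $\mathbb L F(Q)$, since, as noted in Section \ref{sec:cellularisation}, derived functors of Quillen equivalences preserve homotopical compactness.

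Granting this, write the zig--zag of \cite[Section 4]{BarnesO(2)new} as $L_{e_{\tcT}S_\bQ}(O(2)\endash\mathrm{Sp})=\cC_0\leftrightarrow\cC_1\leftrightarrow\cdots\leftrightarrow\cC_m=d\cA(O(2),\tcT)$, where each arrow is a Quillen equivalence and $\cC_m$ carries the dualizable model structure. Set $K_0:=i^\ast(K)$, which is a set of cofibrant homotopically compact objects of $\cC_0$ as established in the proof of Theorem \ref{CyclicRestrictionEquivalence}, and define $K_{j+1}$ recursively as the derived image of $K_j$ under the Quillen equivalence relating $\cC_j$ and $\cC_{j+1}$; by the previous paragraph each $K_j$ again consists of cofibrant homotopically compact objects. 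Applying the cellularisation principle at each of the $m$ steps produces a zig--zag of Quillen equivalences
\[
K_0\cell\cC_0\ \leftrightarrow\ K_1\cell\cC_1\ \leftrightarrow\ \cdots\ \leftrightarrow\ K_m\cell\cC_m .
\]
Here $K_m\cell\cC_m$ is precisely $\mathrm{im}(K)\cell d\cA(O(2),\tcT)$ with the dualizable model structure, by the very definition of $\mathrm{im}(K)$ as the derived image of $i^\ast(K)$ (equivalently of $K$) pushed through the Barnes zig--zag; and $K_0\cell\cC_0=i^\ast(K)\cell L_{e_{\tcT}S_\bQ}(O(2)\endash\mathrm{Sp})$ is Quillen equivalent to $L_{e_{\cT}S_\bQ}(SO(3)\endash\mathrm{Sp})$ by Theorem \ref{CyclicRestrictionEquivalence}. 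Concatenating these equivalences gives the asserted zig--zag.

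The argument involves no new ideas; its content is the bookkeeping. One must run through \cite[Section 4]{BarnesO(2)new} and confirm that every intermediate category $\cC_j$ is right proper, cellular and stable, so that each cellularisation exists and \cite[Theorem 5.9]{BarnesConstanze} applies, and one must keep track of the derived images $K_j$ and their homotopical compactness through the whole zig--zag — it is exactly at the latter point that one uses that every arrow of the Barnes zig--zag is a genuine Quillen equivalence and not merely a Quillen adjunction. I expect this verification, rather than any conceptual difficulty, to be the main obstacle.
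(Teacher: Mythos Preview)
Your proposal is correct and follows essentially the same approach as the paper: the paper's argument is the single sentence preceding the theorem, namely that one cellularises every step of the zig--zag from \cite[Section 4]{BarnesO(2)new} at the derived images of $i^\ast(K)$, uses that cellularisation preserves Quillen equivalences, and concatenates with Theorem \ref{CyclicRestrictionEquivalence}. You have spelled out the bookkeeping (the recursive definition of the $K_j$, the appeal to \cite[Theorem 2.1]{GreenShipCell}, and the preservation of homotopical compactness) that the paper leaves implicit, but the strategy is identical.
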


Theorem \ref{summary_cyclic_SO3} already gives an algebraic model for the toral part of rational $SO(3)$--spectra. However, it is not easy to work with a cellularisation of a model category. Thus we show that the model  above is Quillen equivalent to the simpler, algebraic category $d\cA(SO(3),\cT)$ described in Section \ref{The category A(SO(3),c)}. To do this, we first switch to the cellularisation of the injective model structure.

\begin{lem}The identity adjunction between $\mathrm{im}(K)\endash\mathrm{cell}\endash d\cA(O(2),\tcT)$ where $d\cA(O(2),\tcT)$ is equipped with the dualizable model structure and $\mathrm{im}(K)\endash\mathrm{cell}\endash d\cA(O(2),\tcT)$ where $d\cA(O(2),\tcT)$ is equipped with the injective model structure is a Quillen equivalence.
\end{lem}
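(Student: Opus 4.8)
The plan is to show that the two model structures we are comparing --- the dualizable model structure on $d\cA(O(2),\tcT)$ and the injective model structure on $d\cA(O(2),\tcT)$ --- already become Quillen equivalent before cellularisation, and that this property is preserved by cellularising at a common set of cells. First I would recall from \cite{BarnesO(2)new} (and the discussion in Section \ref{A(G,c)}) that the dualizable and injective model structures on $d\cA(O(2),\tcT)$ have the same weak equivalences, namely the homology isomorphisms, and that the identity functor $\id : d\cA(O(2),\tcT)_{\mathrm{dualizable}} \lra d\cA(O(2),\tcT)_{\mathrm{injective}}$ is a left Quillen functor: every dualizable-cofibration is in particular a monomorphism, hence an injective-cofibration, and the identity preserves all weak equivalences. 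Since both structures have the same weak equivalences, a Quillen adjunction between them whose left adjoint preserves and reflects weak equivalences is automatically a Quillen equivalence (the derived unit and counit are identities on homology). So the uncellularised identity adjunction is a Quillen equivalence.

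Next I would invoke the fact that cellularisation is functorial in Quillen equivalences: given a Quillen equivalence $\cC \simeq_Q \cD$ and a set of cofibrant objects $K$ in $\cC$ whose derived image is (equivalent to) a set $K'$ in $\cD$, the cellularisations $K\cell\cC$ and $K'\cell\cD$ are Quillen equivalent --- this is exactly the mechanism used repeatedly in the paper (e.g. in the passage from Theorem \ref{CyclicRestrictionEquivalence} to Theorem \ref{summary_cyclic_SO3}, and in \cite{GreenShipCell}). Here the situation is even simpler: the underlying adjunction is the identity, so the set of cells $\mathrm{im}(K)$ on the dualizable side maps to the same set $\mathrm{im}(K)$ on the injective side (up to the identity), and in particular there is no discrepancy to resolve. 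One must only check that $\mathrm{im}(K)$ consists of cofibrant objects in both structures and that the cellularisations exist: both structures are right proper (the injective one by Proposition \ref{prop:rightproper_AT}, and the dualizable one by \cite{BarnesO(2)new}) and cellular, so the right Bousfield localisations at $\mathrm{im}(K)$ exist by \cite[Theorem 5.1.1]{Hirschhorn}.

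Putting these together: the identity functor is a left Quillen functor $\mathrm{im}(K)\cell\, d\cA(O(2),\tcT)_{\mathrm{dualizable}} \lra \mathrm{im}(K)\cell\, d\cA(O(2),\tcT)_{\mathrm{injective}}$, since cellularisation leaves the fibrations unchanged and shrinks the class of cofibrations in a compatible way; it preserves all weak equivalences (the $\mathrm{im}(K)$--cellular equivalences agree on the two structures because the graded maps $[k,-]^{\cC}_*$ are computed using the common weak equivalences); and it reflects them for the same reason. Hence the derived unit and counit are weak equivalences on all objects, and the identity adjunction is a Quillen equivalence. The main point to get right --- and the only step requiring any care --- is the comparison of the cofibration classes after cellularisation, i.e. confirming that a $\mathrm{im}(K)$--cofibration in the dualizable cellularisation is a $\mathrm{im}(K)$--cofibration in the injective one; this follows from the characterisation of $K$--cofibrant objects as the $K$--cellular cofibrant objects of the underlying structure together with the inclusion of cofibration classes $\Cof_{\mathrm{dualizable}} \subseteq \Cof_{\mathrm{injective}}$, which is all that is needed for the identity to remain left Quillen after localisation.
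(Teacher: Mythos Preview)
Your proposal is correct and follows the same approach as the paper: the key observation is that the identity adjunction between the dualizable and injective model structures on $d\cA(O(2),\tcT)$ is already a Quillen equivalence (same weak equivalences, compatible cofibrations), and cellularising both sides at the same set $\mathrm{im}(K)$ preserves this. The paper's own proof is a single sentence to this effect; your version simply unpacks the details --- one small correction: Proposition~\ref{prop:rightproper_AT} concerns $d\cA(SO(3),\cT)$, not $d\cA(O(2),\tcT)$, so for properness of the injective structure on the latter you should cite the theorem in Section~\ref{A(G,c)} (from \cite{GreenleesS1}) instead.
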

\begin{proof}The result follows from the fact that the identity adjunction was a Quillen equivalence between $d\cA(O(2),\tcT)$ with the dualizable model structure and $d\cA(O(2),\tcT)$ with the injective model structure.
\end{proof}

\begin{lem}\label{obliczenianacoidageneratory}The set $\mathrm{im}(K)$ consists of the same objects as $\widetilde{F}(\cK)$, where $\cK$ is the set described in Definition \ref{generatorsforA(SO(3),c)} and $\mathrm{im}(K)$ denotes the derived image under the zig-zag of Quillen equivalences described in \cite[Section 4]{BarnesO(2)new} of the set of cells $K$ described in Proposition \ref{homComCyclicGen}.
\end{lem}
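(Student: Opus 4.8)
The statement to prove is Lemma \ref{obliczenianacoidageneratory}: that the set $\mathrm{im}(K)$ — the derived image of the topological generators $K$ from Proposition \ref{homComCyclicGen} under the Barnes zig-zag of \cite{BarnesO(2)new} — coincides, object by object, with the set $\widetilde{F}(\cK)$ of Remark \ref{rmk:cells_from_tildaFK}. The approach is a direct calculation: trace each generator in $K$ through every step of the zig-zag of Quillen equivalences in \cite[Section 4]{BarnesO(2)new} and compare the output with the explicit descriptions in Remark \ref{rmk:cells_from_tildaFK}. Since all the categories in the zig-zag are stable and the functors exact (or at least preserve the relevant homotopy types), it suffices to identify the derived image of each object up to isomorphism in the homotopy category of $d\cA(O(2),\tcT)$.

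First I would recall the objects in $K$: the spectra $\sigma_n = SO(3)_+ \wedge_{C_n} e_{C_n}S^0$ for $n>0$ and $SO(3)/SO(2)_+$, all regarded after restriction $i^\ast$ as toral $O(2)$--spectra (this is the set $i^\ast(K)$ of Theorem \ref{CyclicRestrictionEquivalence}). Since $i^\ast(SO(3)_+ \wedge_{C_n} e_{C_n}S^0)$ and $i^\ast(SO(3)/SO(2)_+)$ need to be identified as $O(2)$--spectra, I would first compute these restrictions using the double-coset / Mackey description of $i^\ast \circ (SO(3)_+ \wedge_{C_n} -)$; the key geometric input is that $C_n \le O(2) \le SO(3)$ and that $SO(3)/SO(2)$ restricted to $O(2)$ is a two-cell complex (corresponding to the fact that the stabiliser of the trivial subgroup is connected in $SO(3)$ but not in $O(2)$ — precisely the phenomenon flagged at the start of Section \ref{The category A(SO(3),c)}). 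Then I would push these through the Barnes zig-zag step by step, at each stage invoking the explicit formulas for the comparison functors given in \cite[Section 4]{BarnesO(2)new}, which on generators are computed there. The endpoint of the zig-zag sends the topological cyclic generators to objects of $d\cA(O(2),\tcT)$ concentrated at the relevant factors, and the torus generator to an object whose $M$--part is $\cO_\cF \otimes \bQ[W]$ plus a suspended copy of $\widetilde{\bQ}$ mapping into $\cE^{-1}\cO_\cF \otimes \bQ[W]$ — exactly $\widetilde{F}(\sigma_{(T)})$.

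The bookkeeping that makes everything match is the $W$--twisting built into $\widetilde{F}$: I would emphasise that the reason one uses $\widetilde{F}$ rather than $F$ is precisely so that the parity conventions ($c$ of degree $-2$ on which $w$ acts by $-1$, versus $d$ of degree $-4$ fixed by $w$) line up with the way the $O(2)/SO(2)$--action appears in the topological generators after restriction from $SO(3)$. Concretely: $\widetilde{F}(\sigma_1) = (\widetilde{\bQ} \oplus \Sigma^{-2}\bQ \to 0)$ with $c$ carrying $\widetilde{\bQ}$ to $\bQ$ matches the image of $\sigma_1 = SO(3)_+ \wedge_{C_1} e_{C_1}S^0$; $\widetilde{F}(\sigma_{(H)}) = (\bQ[W]_H \to 0)$ matches the image of $\sigma_n$ for $n>1$; and $\widetilde{F}(\sigma_{(T)})$ matches $SO(3)/SO(2)_+$. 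I would carry these three comparisons out in turn.

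**Main obstacle.** The real work — and the only genuinely non-formal part — is the first step: correctly computing $i^\ast(\sigma_n)$ and $i^\ast(SO(3)/SO(2)_+)$ as toral $O(2)$--spectra and then tracking the $W = O(2)/SO(2)$--action (including its twist on Euler classes) through every stage of the \cite{BarnesO(2)new} zig-zag. The geometry of how $SO(3)/SO(2)$ decomposes over $O(2)$, and how the Weyl-group action on the fixed points of $SO(3)/C_{n+}$ compares with the $W$--action on $\cO_\cF$, is where sign/parity errors would creep in; everything after that is a matter of quoting the explicit functor descriptions from \cite[Section 4]{BarnesO(2)new} and from Remark \ref{rmk:cells_from_tildaFK} and checking they agree. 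Since \cite{BarnesO(2)new} already records the images of its generators under each comparison functor, I expect this to reduce to a finite, if slightly tedious, verification rather than a new argument.
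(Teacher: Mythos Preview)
Your plan is sound and follows the same overall strategy as the paper: identify $i^\ast$ of each topological generator as an $O(2)$--spectrum, compute its image in $d\cA(O(2),\tcT)$, and match it against the list in Remark~\ref{rmk:cells_from_tildaFK}. The paper, however, does not trace through the zig-zag of \cite[Section~4]{BarnesO(2)new} functor by functor as you propose. Instead it uses two shortcuts. First, for $n>1$ it shows directly (by a geometric-fixed-point calculation of $\pi_*^H$ on both sides) that $i^\ast\sigma_n$ is weakly equivalent to the \emph{$O(2)$-native} generator $O(2)_+\wedge_{C_n}e_{C_n}S^0$, whose image in $\cA(O(2),\tcT)$ is already recorded in \cite[Example~5.8.1]{GreenleesS1}; this bypasses any Mackey/double-coset analysis. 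Second, for the free and torus cases it invokes the functor $\pi_*^\cA$ of \cite[Theorem~5.6.1]{GreenleesS1}, which computes the algebraic image in one step, and for $SO(3)/T_+$ it reads off the answer from the long exact sequence of the explicit $O(2)$--cofibre sequence $N_+ \to N/T_+ \vee N/D_{2+} \to SO(3)/T_+$ coming from the cell structure on $S(\bR^3)_+$. Your step-by-step trace would reach the same destination, but the paper's route is shorter and avoids the bookkeeping you flag as the main obstacle.
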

\begin{proof}
First we show that for every $n>1$ $\sigma_n$ is weakly equivalent in $L_{e_{\tcT}S_\bQ}(O(2)\endash\mathrm{Sp})$ to $O(2)\wedge_{C_n}e_{C_n}S^0$. The map is induced by the inclusion of $O(2)$ into $SO(3)$ and we will show that it induces an isomorphism on all $\pi_*^H$ for $H\in \tcT$. We will use the notation $N=O(2)$ and $G=SO(3)$ below. We have
$$\pi_*^H(N\wedge_{C_n}e_{C_n}S^0)=[N/H_+,F_{C_n}(N_+, S^{L_N(C_n)}\wedge e_{C_n}S^0)]^N=[N/H_+, S^{L_N(C_n)}\wedge e_{C_n}S^0]^{C_n}.$$
Here ${L_N(C_n)}$ is the tangent $C_n$--representation at  the identity  coset of $N/{C_n}$ and thus is the $1$--dimensional trivial representation. Since the codomain has only geometric fixed points for $H=C_n$ we get a non zero result only for $H= C_n$:
$$[\Phi^{C_n}(N/{C_n}_+),\Phi^{C_n}(S^{L_N(C_n)})]=[S^1\vee S^1,S^1]=\Sigma(\bQ[W]).$$
Here ${L_N(C_n)}$ is the tangent $C_n$--representation at  the identity  coset of $N/{C_n}$ and thus is the $1$--dimensional trivial representation. Similarly we have
$$\pi_*^H(G\wedge_{C_n}e_{C_n}S^0)=[G/H_+,F_{C_n}(G_+, S^{L_G(C_n)}\wedge e_{C_n}S^0)]^G=[G/H_+, S^{L_G(C_n)}\wedge e_{C_n}S^0]^{C_n}$$
and since the codomain has only geometric fixed points for $H=C_n$ we get a non--zero result only for $H= C_n$:
$$[\Phi^{C_n}(G/{C_n}_+),\Phi^{C_n}(S^{L_G(C_n)})]=[S^1\vee S^1,S^1]=\Sigma(\bQ[W]).$$
Notice that ${L_G(C_n)}$ is $3$--dimensional, but it has a one dimensional $C_n$--fixed subspace. 

The images of the cells in $\cA(O(2),\tcT)$ are therefore 
$$\mathrm{im}( G\wedge_{C_n}e_{C_n}S^0)= \mathrm{im}(N\wedge_{C_n}e_{C_n}S^0)= (\Sigma \bQ[W]_{C_n} \lra 0)$$ by \cite[Example 5.8.1]{GreenleesS1} where $\Sigma \bQ[W]$ is in the place $C_n$. 

Now we will use the functors $\pi_*^\cA$ described in \cite[Theorem 5.6.1 and Lemma 5.6.2]{GreenleesS1}.
Since $SO(3)_+$ is free we get 
\begin{multline}\pi_*^\cA(SO(3)_+)=(\pi_*^T(SO(3)_+) \lra 0) =( \pi_*(\Sigma SO(3)/T_+) \lra 0)=\\
( \pi_*(\Sigma S(\bR^3)_+)\lra 0)=(\Sigma^3\widetilde{\bQ} \oplus \Sigma \bQ \lra 0)
\end{multline}
where $\Sigma^3\widetilde{\bQ} \oplus \Sigma \bQ$ is in the place corresponding to the trivial subgroup $1$ and $c$ sends $\tilde{\bQ}$ in degree 3 to $\bQ$ in degree 1.

Finally $SO(3)/T_+=S(\bR^3)_+$  is built as an $O(2)$--space from the following cells.
$$N/T_+ \vee N/D_{2+} \cup N_+\wedge e^1$$
Thus the cofibre sequence
$$N_+ \lra N/T_+\vee N/D_{2+} \lra G/T_+$$
gives the long exact sequence
$$...\lra (\Sigma \bQ[W] \lra 0) \lra (\cO_{\cF}[W]\lra \cE^{-1}\cO_{\cF}\otimes \bQ[W]) \oplus (\Sigma \bQ \lra 0)\lra \mathrm{im}(G/T_+)\lra ...$$
and hence $$\mathrm{im}(G/T_+)= \Sigma^2 \widetilde{\bQ} + \cO_\cF \otimes \bQ[W] \lra  \cE^{-1}\cO_{\cF} \otimes \bQ[W]$$ where $c$ acts on $ \widetilde{\bQ}$ in degree $2$ ($\widetilde{\bQ}$ is in the place corresponding to the trivial subgroup) by sending it to $\bQ \subseteq \bQ[W]$ in degree $0$ and the map is the inclusion.

Notice that these images are exactly the cells (up to suspension) in $\widetilde{F}(\cK)$ (see Remark \ref{rmk:cells_from_tildaFK}), which finishes the proof.
\end{proof}

\begin{thm}\label{koncowePorownanieModeli}The adjunction 
 \[
\xymatrix{
\widetilde{F} \ :\  \mathrm{d}\cA(SO(3),\cT)  \  \ar@<+1ex>[r] & \ \mathrm{im}(K)\endash\mathrm{cell}\endash d\cA(O(2),\tcT) \ :\ \widetilde{R} \ar@<+0.5ex>[l]
} 
\]
defined after the Theorem \ref{adjunction1} is a Quillen equivalence, where both categories (before cellularisation on the right) are equipped with the injective model structure. Here $\mathrm{im}(K)$ denotes the derived image under the zig-zag of Quillen equivalences described in \cite[Section 4]{BarnesO(2)new} of the set of cells $K$ described in Proposition \ref{homComCyclicGen}. 
\end{thm}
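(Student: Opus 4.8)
The plan is to deduce this theorem from Corollary \ref{koncoweCor} by identifying the two sets of cells used to cellularise $d\cA(O(2),\tcT)$. First I would recall that Corollary \ref{koncoweCor}, which itself rests on Theorem \ref{adjunction1} and Lemma \ref{lem:before_cellFR}, already establishes that $(\widetilde{F},\widetilde{R})$ is a Quillen equivalence between $d\cA(SO(3),\cT)$ with the injective model structure and $\widetilde{F}(\cK)\endash\mathrm{cell}\endash d\cA(O(2),\tcT)$, where $d\cA(O(2),\tcT)$ carries the injective model structure. Hence the only thing left to check is that the cellularisation appearing there is the same model category as the one in the statement, namely $\mathrm{im}(K)\endash\mathrm{cell}\endash d\cA(O(2),\tcT)$.

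For that I would invoke Lemma \ref{obliczenianacoidageneratory}, which says precisely that the set $\mathrm{im}(K)$ consists of the same objects as $\widetilde{F}(\cK)$. Since a cellularisation (right Bousfield localisation) of a fixed model category $\cC$ at a set of objects $S$ has the fibrations of $\cC$ as its fibrations, the $S$--cellular equivalences as its weak equivalences, and its cofibrations determined by the left lifting property, it depends only on $S$ --- in fact only on the homotopy types represented by $S$. Therefore $\mathrm{im}(K)\endash\mathrm{cell}\endash d\cA(O(2),\tcT)$ and $\widetilde{F}(\cK)\endash\mathrm{cell}\endash d\cA(O(2),\tcT)$ are literally the same model category, and the theorem follows by substituting this identification into Corollary \ref{koncoweCor}.

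I do not expect a real obstacle in this last step: all of the substantive content --- computing the derived images $\mathrm{im}(K)$ of the topological generators of Proposition \ref{homComCyclicGen} along the zig-zag of \cite[Section 4]{BarnesO(2)new}, and recognising them as the algebraically defined cells $\widetilde{F}(\cK)$ of Remark \ref{rmk:cells_from_tildaFK} --- is already carried out in Lemma \ref{obliczenianacoidageneratory}. The one point I would make sure to spell out is that both $K$ and $\cK$ are taken closed under suspension and desuspension, so that $\mathrm{im}(K)$ and $\widetilde{F}(\cK)$ coincide as sets of cells on the nose rather than merely up to shift; this is immediate from the definitions but is exactly what makes the two cellularised model structures literally equal instead of merely Quillen equivalent.
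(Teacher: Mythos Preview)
Your proposal is correct and follows essentially the same approach as the paper: invoke Lemma \ref{obliczenianacoidageneratory} to identify $\mathrm{im}(K)$ with $\widetilde{F}(\cK)$ and then apply Corollary \ref{koncoweCor}. The extra remarks you make about why two cellularisations at equal sets of cells coincide are fine but not needed beyond what the paper states.
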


\begin{proof} It is enough to show that $\mathrm{im}(K)$ consists of the same objects as $\widetilde{F}(\cK)$, where $\cK$ is the set described in Definition \ref{generatorsforA(SO(3),c)}, which we established in Lemma \ref{obliczenianacoidageneratory}. The result follows then from Corollary \ref{koncoweCor}.

\end{proof}

We summarize the results of this section.

\begin{thm}\label{thm:toral_result}
There is a zig--zag of Quillen equivalences between $L_{e_{\cT}S_\bQ}(SO(3)\endash\mathrm{Sp})$ and $d\cA(SO(3),\cT)$.
\end{thm}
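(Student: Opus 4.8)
The plan is to assemble the zig--zag from the intermediate Quillen equivalences already proved in this section, combining topological steps (restriction--coinduction to $O(2)$) with the algebraic comparison just completed. Concretely, I would string together the following chain of Quillen equivalences, all read as going between $L_{e_{\cT}S_\bQ}(SO(3)\endash\mathrm{Sp})$ on one end and $d\cA(SO(3),\cT)$ on the other:
\[
\xymatrix@C=1.2pc{
L_{e_{\cT}S_\bQ}(SO(3)\endash\mathrm{Sp}) \ar@{<->}[r] & i^\ast(K)\endash\mathrm{cell}\endash L_{e_{\tcT}S_\bQ}(O(2)\endash\mathrm{Sp}) \ar@{<~>}[r] & \mathrm{im}(K)\endash\mathrm{cell}\endash d\cA(O(2),\tcT) \ar@{<->}[r] & d\cA(SO(3),\cT)
}
\]
The first arrow is Theorem \ref{CyclicRestrictionEquivalence}. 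The middle (wavy) arrow is the zig--zag of Theorem \ref{summary_cyclic_SO3}, obtained by cellularising, step by step, the zig--zag of \cite[Corollary 4.22]{BarnesO(2)new} at the derived images of $i^\ast(K)$; cellularisation preserves Quillen equivalences, so each step remains a Quillen equivalence. The last arrow is first a change of model structure (the identity Quillen equivalence between the dualizable and injective cellularised structures on $d\cA(O(2),\tcT)$, using the Lemma comparing those two structures) followed by Theorem \ref{koncowePorownanieModeli}, the $(\widetilde F, \widetilde R)$ Quillen equivalence, whose validity rests on Corollary \ref{koncoweCor} together with Lemma \ref{obliczenianacoidageneratory} identifying $\mathrm{im}(K)$ with $\widetilde F(\cK)$.

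Since a composite of Quillen equivalences is a Quillen equivalence, concatenating these produces the desired zig--zag, which is exactly the assertion of Theorem \ref{thm:toral_result}. So the proof really is a one-line bookkeeping argument: collect the references and observe that the left-hand and right-hand endpoints of consecutive pieces match up on the nose.

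The only genuine subtlety --- and the step I expect to be the crux --- is making sure the \emph{sets of cells match across the junctions}. On the topological side we cellularise $L_{e_{\tcT}S_\bQ}(O(2)\endash\mathrm{Sp})$ at $i^\ast(K)$, then transport that set through the Barnes zig--zag to get $\mathrm{im}(K)$; on the algebraic side we cellularise $d\cA(O(2),\tcT)$ at $\widetilde F(\cK)$. These are forced to agree by Lemma \ref{obliczenianacoidageneratory}, whose proof in turn depends on the explicit computation of the images of $SO(3)_+\wedge_{C_n}e_{C_n}S^0$ and $SO(3)/SO(2)_+$ under the algebraicisation functor and comparing them with Remark \ref{rmk:cells_from_tildaFK}. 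Hence the honest content of Theorem \ref{thm:toral_result} was already discharged in Lemma \ref{obliczenianacoidageneratory} and Theorem \ref{koncowePorownanieModeli}; what remains here is simply to record the concatenation. A short proof along these lines --- citing Theorem \ref{CyclicRestrictionEquivalence}, Theorem \ref{summary_cyclic_SO3}, the dualizable--versus--injective comparison lemma, and Theorem \ref{koncowePorownanieModeli}, and noting that Quillen equivalences compose --- is all that is needed.
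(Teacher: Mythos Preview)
Your proposal is correct and matches the paper's approach exactly: the theorem is stated in the paper simply as a summary of the section's results, and the zig--zag you assemble (Theorem \ref{CyclicRestrictionEquivalence}, Theorem \ref{summary_cyclic_SO3}, the dualizable--injective comparison lemma, and Theorem \ref{koncowePorownanieModeli}) is precisely the chain of Quillen equivalences developed there. Your identification of the cell-matching via Lemma \ref{obliczenianacoidageneratory} as the only substantive point is also accurate.
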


%%%%%%%%%%%%%%%%%%%%%%%%%%%%%%%%%%%%%%%%%%%%%%%%%%
\section{The dihedral part}\label{Dihedral part_chapter}
The algebraic model for the dihedral part of rational $SO(3)$--spectra is almost identical to the algebraic model of the dihedral part of rational $O(2)$--spectra presented in \cite[Section 5]{BarnesO(2)new}. The difference comes from two things. First, in $SO(3)$ every dihedral subgroup of order 2, namely $D_2$ is conjugate to cyclic subgroups $C_2$ and thus is already taken into account in the toral part. Second, the normaliser of $D_4$ in $SO(3)$ is a subgroup $\Sigma_4$. For those reasons we exclude subgroups conjugate to $D_2$ and subgroups conjugate to $D_4$ from the dihedral part $\mathcal{D}$. Excluding $D_2$ and $D_4$ from the dihedral part $\cD$ allows us to deduce that the information captured by subgroups of $SO(3)$ that are in $\cD$ is the same as captured by subgroups of $O(2)$ that are in $\tcD \setminus \{D_2, D_4\}$, see Proposition \ref{generatorsDihedralI}. This leads to the reduction of dihedral part of rational $SO(3)$--spectra to the (part of the) dihedral part of rational $O(2)$--spectra in Theorem \ref{iQuillenEqDih}.

We know from \cite{GreenleesSO3} that the model for the homotopy category of the dihedral part of rational $SO(3)$--spectra is of the form of certain sheaves over an orbit space for $\mathcal{D}$, denoted further by $\cA(SO(3), \cD)$. Section \ref{A(D)} discusses this category as well as the category of chain complexes in $\cA(SO(3), \cD)$; $\Ch(\cA(SO(3), \cD))$. In Section \ref{subsec:comparison_dihedral} we present the comparison between the dihedral part of rational $SO(3)$--spectra and its algebraic model $\Ch(\cA(SO(3), \cD))$.

\subsection{Categories  $\cA(SO(3), \cD)$ and $\Ch(\cA(SO(3), \cD))$}\label{A(D)}
First we recall the construction of $\cA(SO(3), \cD)$ (see also \cite{GreenleesSO3}), then we present the model structure on $\Ch(\cA(SO(3), \cD))$ and recall a set of homotopically compact generators for this model category.  

Material in this section is based on \cite[Section 5.1]{BarnesO(2)new}. There is a slight difference between the definition of $\cA(O(2), \tcD)$ presented there ($\cA(O(2), \cD)$  is the notation used in \cite{BarnesO(2)new} for this category) and $\cA(SO(3), \cD)$ below, namely we start indexing modules from $k=3$, which corresponds to $D_6=D_{2k}$. Indexing in \cite{BarnesO(2)new} starts from $1$. 
 
Let $W$ be the group of order two. 
\begin{defn} Define a category $\cA(SO(3), \cD)$ as follows.
 
An object $M$ consists of a $\bQ$--module $M_\infty$, a collection $M_k \in \bQ[W]$--mod for $k>2$ and a map (called the germ map) of $\bQ[W]$--modules $\sigma_M: M_\infty \lra \colim_{n>2} \prod_{k\geqslant n}M_k$, where the $W$--action on $M_\infty$ is trivial.

A map $f:M \lra N$ in $\cA(SO(3), \cD)$ consists of a map $f_\infty: M_\infty \lra N_\infty$ of $\bQ$--modules and a collection of maps of $\bQ[W]$--modules $f_k: M_k \lra N_k$ which commute with germ maps $\sigma_M$ and $\sigma_N$
\[
\xymatrix@C=2pc@R=2pc{M_\infty \ar[d]^{f_\infty} \ar@<+0.1ex>[r]^{\sigma_M\ \ \ \ \ } & \colim_{n>2}\prod_{k\geqslant n}M_k \ar[d]^{\colim_{n>2}\prod_{k\geqslant n}f_k}
\\ N_\infty \ar@<+0.1ex>[r]^{\sigma_N\ \ \ \ \ } & \colim_{n>2}\prod_{k\geqslant n}N_k \ .} 
\]

\end{defn}

\begin{defn} Define a category $\Ch(\cA(SO(3), \cD))$ to be the category of chain complexes in $\cA(SO(3), \cD)$ and $\mathrm{g}\cA(SO(3), \cD)$ to be the category of graded objects in $\cA(SO(3), \cD)$.
\end{defn}

An object $M$ of $\Ch(\cA(SO(3), \cD))$ consists of rational chain complex $M_\infty$, a collection of chain complexes of $\bQ[W]$--modules $M_k$ for $k>2$  and a germ map of chain complexes of $\bQ[W]$--modules $\sigma_M:  M_\infty \lra \colim_{n>2} \prod_{k\geqslant n}M_k$, where $W$--action on $M_\infty$ is trivial.

Note, that we used a chain complex notation here, unlike for the toral part, where we used $d\cA(SO(3),\cT)$ to mean differential objects in $\cA(SO(3),\cT)$. The difference between these two is that $\cA(SO(3), \cD)$ is not a graded category, and we introduce a grading taking chain complexes in $\cA(SO(3), \cD)$. On the other hand, $\cA(SO(3), \cT)$ is already graded, and we are interested in differential objects in $\cA(SO(3), \cT)$.

\begin{rmk} Since the only difference between our definition of $\cA(SO(3), \cD)$ and the one for $\cA(O(2), \tcD)$ lies in index $k$, all constructions for $\cA(SO(3), \cD)$ are analogous to the ones for $\cA(O(2), \tcD)$ presented in \cite{BarnesO(2)new}. 
\end{rmk}

It is helpful to consider several adjoint pairs involving the category $\Ch(\cA(SO(3), \cD))$. They are used to get a model structure on $\Ch(\cA(SO(3), \cD))$.

\begin{defn}\cite[Definition 5.9]{BarnesO(2)new}\label{adjunctionsA(D)}
Let $A \in \Ch(\bQ)$, $X\in \Ch(\bQ[W])$ and $M \in \Ch(\cA(SO(3), \cD))$. For a natural number $k>2$ we define the following functors:
\begin{itemize}
\item $i_k: \Ch(\bQ[W]) \lra \Ch(\cA(SO(3), \cD))$ by $(i_k(X))_\infty=0$ and $(i_k(X))_n=0$ for $n\neq k$ and $(i_k(X))_k=X$.
\item $p_k: \Ch(\cA(SO(3), \cD)) \lra \Ch(\bQ[W])$ by $p_k(M)=M_k$
\item $c: \Ch(\bQ) \lra \Ch(\cA(SO(3), \cD))$ by $(cA)_k=A$,  $(cA)_\infty = A$ and $\sigma_{cA}$ is the diagonal map into the product.
\end{itemize} 
Then $(i_k, p_k)$, $(p_k,i_k)$ and $(c,  \nplus^W)$ are adjoint pairs, where the functor $\nplus^W$ is given in \cite[Definition 5.6]{BarnesO(2)new}.
\end{defn}

The category $\Ch(\cA(SO(3), \cD))$ is bicomplete by \cite[Lemma 5.7]{BarnesO(2)new} so we can proceed to defining a model structure on it.

\begin{prop}\cite[Proposition 5.10]{BarnesO(2)new} There exists a model structure on the category $\Ch(\cA(SO(3), \cD))$ where $f$ is a weak equivalence or fibration if $f_\infty$ and each of the $f_k$ are weak equivalences or fibrations respectively. This model structure is cofibrantly generated and proper. 
\end{prop}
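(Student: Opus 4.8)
The plan is to invoke \cite[Proposition~5.10]{BarnesO(2)new} directly. The only difference between $\cA(SO(3),\cD)$ and the category $\cA(O(2),\tcD)$ treated there is that the bookkeeping index $k$ ranges over $k>2$ rather than over $k\geqslant 1$, and every step of the construction of the model structure is carried out one value of $k$ at a time, together with a separate argument at the level $\infty$; hence the shift changes nothing. For orientation I recall the shape of that argument. The model structure is obtained by transfer of a cofibrantly generated model structure: using the adjoint pairs of Definition~\ref{adjunctionsA(D)}, assemble the evaluation functors $p_k$ ($k>2$), whose left adjoints are the $i_k$, together with the right adjoint to $c$, into a single functor
\[
\Phi\colon \Ch(\cA(SO(3),\cD)) \lra \Ch(\bQ)\times\prod_{k>2}\Ch(\bQ[W]),
\]
and equip the target with the product of the projective model structures on chain complexes over $\bQ$ and over $\bQ[W]=\bQ[C_2]$. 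Each of these factors is cofibrantly generated (the generating (trivial) cofibrations being the usual sphere--disk inclusions), left and right proper, and has all objects small.

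Since $\Ch(\cA(SO(3),\cD))$ is bicomplete --- \cite[Lemma~5.7]{BarnesO(2)new} --- one is in position to apply the standard lifting theorem for cofibrantly generated model structures, \cite[Theorem~11.3.2]{Hirschhorn}. This produces a model structure on $\Ch(\cA(SO(3),\cD))$ in which the generating (trivial) cofibrations are the images under the left adjoint of those in the target, a map is a weak equivalence (resp.\ fibration) exactly when $\Phi$ sends it to one, and the cofibrations are determined by the left lifting property; unwinding $\Phi$ identifies this with the structure described in the statement, where $f$ is a weak equivalence (resp.\ fibration) if and only if $f_\infty$ and every $f_k$ is a quasi-isomorphism (resp.\ a degreewise surjection). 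The single hypothesis of \cite[Theorem~11.3.2]{Hirschhorn} that is not immediate --- and this is the main obstacle --- is the acyclicity condition: every relative cell complex built from images of the generating trivial cofibrations must be a weak equivalence. Here one uses that these images are the maps $0\to c(D^n\bQ)$ and $0\to i_k(D^n\bQ[W])$, which are objectwise split monomorphisms with contractible cokernel, and that pushouts and transfinite composites in $\Ch(\cA(SO(3),\cD))$ are formed objectwise at each $k$ and at $\infty$ (colimits being levelwise, with induced germ map); since $\bQ$ and $\bQ[C_2]$ are semisimple there is no homological obstruction, so such cell complexes remain objectwise trivial cofibrations and hence weak equivalences. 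This is precisely the verification carried out in the proof of \cite[Proposition~5.10]{BarnesO(2)new}, and it does not see the range of $k$.

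Properness is then checked objectwise. Cofibrations are in particular objectwise monomorphisms and pushouts in $\cA(SO(3),\cD)$ are objectwise, so left properness follows from left properness of $\Ch(\bQ)$ and $\Ch(\bQ[W])$; dually, fibrations are objectwise epimorphisms and finite limits in $\cA(SO(3),\cD)$ are objectwise --- the filtered colimit in the germ map commutes with finite limits --- so right properness follows from right properness of $\Ch(\bQ)$ and $\Ch(\bQ[W])$. The only nontrivial point is thus the acyclicity condition above, which goes through verbatim as in \cite[Proposition~5.10]{BarnesO(2)new}.
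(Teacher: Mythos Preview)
Your proposal is correct and takes the same approach as the paper: the paper gives no proof beyond the citation to \cite[Proposition~5.10]{BarnesO(2)new}, relying on the earlier remark that the only difference from $\cA(O(2),\tcD)$ is the starting index $k>2$. Your elaboration of the transfer argument, the acyclicity check, and the objectwise verification of properness is additional detail not present in the paper, and it accurately reflects the shape of Barnes's argument; the one point to be careful about is that the right adjoint to $c$ is $\nplus^W$ rather than evaluation at $\infty$, so the identification of the transferred weak equivalences and fibrations with the objectwise description in the statement requires a small check (carried out in \cite{BarnesO(2)new}), but you correctly flag this as an ``unwinding'' step and defer to the cited proof.
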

We call this model structure the {\bf projective model structure} on $\Ch(\cA(SO(3), \cD))$. 
The generating cofibrations are of the form $cI_\bQ$ and $i_kI_{\bQ[W]}$ for $k\geqslant3$ and generating acyclic cofibrations are of the form $cJ_\bQ$ and $i_kJ_{\bQ[W]}$ for $k\geqslant3$. Here $I_\bQ$ and $J_\bQ$ denote generating cofibrations and generating trivial cofibrations (respectively) for the projective model structure on $\Ch(\bQ)$, and $I_{\bQ[W]}$, $J_{\bQ[W]}$ denote  generating cofibrations and generating trivial cofibrations (respectively) for the projective model structure on $\Ch(\bQ[W])$.

We finish this section by giving a set of homotopically compact generators (recall Definitions \ref{compactobj} and \ref{def:hocompact}) for $\cA(SO(3), \cD)$.

\begin{lem}\cite[Lemma 5.11]{BarnesO(2)new}\label{genDihedralAlg} The set of objects $\cG_a$ consisting of  $i_k\bQ[W]$ for $k\geqslant 3$ and $c\bQ$ is a set of homotopically compact, cofibrant and fibrant generators for the category $\Ch(\cA(SO(3), \cD))$ with the projective model structure.
\end{lem}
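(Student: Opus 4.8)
The plan is to follow the proof of \cite[Lemma 5.11]{BarnesO(2)new} essentially verbatim; the only difference between $\cA(SO(3),\cD)$ and $\cA(O(2),\tcD)$ is that the modules $M_k$ are indexed by $k\geqslant 3$ rather than $k\geqslant 1$, and no step of the argument uses the value of the starting index. For cofibrancy and fibrancy, recall that a map in $\Ch(\cA(SO(3),\cD))$ is a fibration or a weak equivalence exactly when its $\infty$--component and each of its $k$--components is; hence the right adjoints $p_k$ and $\nplus^W$ of Definition \ref{adjunctionsA(D)} preserve fibrations and weak equivalences, so $i_k$ and $c$ are left Quillen. Since $\bQ$ and $\bQ[W]$ are cofibrant in the projective model structures on $\Ch(\bQ)$ and $\Ch(\bQ[W])$ (both rings are semisimple, so there every object is cofibrant and fibrant), the objects $c\bQ$ and $i_k\bQ[W]$ are cofibrant; they are fibrant because every object of $\Ch(\cA(SO(3),\cD))$ is.

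For homotopic compactness, note that all objects are fibrant and $p_k$, $\nplus^W$ preserve weak equivalences, so the derived adjunctions give, for any $A$ in $\Ch(\cA(SO(3),\cD))$, natural isomorphisms $[i_k\bQ[W],A]_*\cong[\bQ[W],A_k]_*$ and $[c\bQ,A]_*\cong[\bQ,\nplus^W A]_*$, the right-hand groups being computed in $\Ho(\Ch(\bQ[W]))$ and $\Ho(\Ch(\bQ))$ respectively. Coproducts in $\Ch(\cA(SO(3),\cD))$ are formed in $\Ch(\bQ)$ and $\Ch(\bQ[W])$ at the $\infty$-- and $k$--spots, so $p_k$ commutes with arbitrary coproducts, and since $\bQ[W]$ is homotopically compact in $\Ch(\bQ[W])$ each $i_k\bQ[W]$ is homotopically compact. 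The analogous statement for $c\bQ$ -- which amounts to controlling how $\nplus^W$ interacts with coproducts -- is established exactly as in \cite[Lemma 5.11]{BarnesO(2)new}, using homotopic compactness of $\bQ$ in $\Ch(\bQ)$.

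To see that $\cG_a$ generates, it suffices, since $\Ho(\Ch(\cA(SO(3),\cD)))$ is triangulated with coproducts, to show that $[g,A]_*=0$ for all $g\in\cG_a$ forces $A\simeq 0$. From $[i_k\bQ[W],A]_*\cong H_*(A_k)$ as graded $\bQ[W]$--modules (using that $\bQ[W]$ is a projective generator of $\bQ[W]\endash\mathrm{mod}$) we obtain $A_k\simeq 0$ for every $k\geqslant 3$. Since the germ map of any object of $\cA(SO(3),\cD)$ is $\bQ[W]$--linear with trivial $W$--action on the source, it factors through the $W$--fixed part of $\colim_{n>2}\prod_{k\geqslant n}A_k$; once the $A_k$ are acyclic this target is acyclic (products of acyclic complexes of $\bQ[W]$--modules are acyclic because $\bQ[W]$ is semisimple, and filtered colimits preserve acyclicity), so $\nplus^W A$ is quasi-isomorphic to $A_\infty$ and $[c\bQ,A]_*\cong H_*(A_\infty)=0$. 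Hence $A$ is levelwise acyclic, i.e.\ weakly equivalent to $0$, and $\cG_a$ is a set of generators.

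The one step where the specific shape of $\cA(SO(3),\cD)$, rather than formal features of a levelwise model structure, really enters is the homotopic compactness of $c\bQ$: its right adjoint $\nplus^W$ is built from a filtered colimit of infinite products and so does not obviously commute with coproducts, and I expect this to be the main (but purely technical) obstacle, handled by importing the corresponding argument from \cite[Lemma 5.11]{BarnesO(2)new} unchanged. Everything else is bookkeeping, and in particular the reindexing from $k\geqslant1$ to $k\geqslant3$ plays no role.
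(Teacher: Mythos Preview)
Your proposal is correct and follows exactly the approach the paper intends: the lemma is stated with a direct citation to \cite[Lemma 5.11]{BarnesO(2)new} and no separate proof, since as you observe the only change from $\cA(O(2),\tcD)$ to $\cA(SO(3),\cD)$ is the starting index $k\geqslant 3$, which is irrelevant to every step. Your identification of the homotopic compactness of $c\bQ$ as the one substantive point, to be imported from Barnes's argument, is also on target.
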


\subsection{Comparison}\label{subsec:comparison_dihedral}
We begin by giving a set of homotopically compact, cofibrant generators for $L_{e_{\cD}S_\bQ}(SO(3)\endash\mathrm{Sp})$. We stick to the convention of writing $e_H$ for $e_{(H)_{SO(3)}}$.

\begin{lem}\label{lem:generatorsdihedralSO(3)}The set  $$\hat{\mathcal{G}}:=\{SO(3)/O(2)_+\} \cup \{e_{D_{2n}}{SO(3)/D_{2n}}_+ | n>2 \}$$ 
is a set of homotopically compact, cofibrant generators for $L_{e_{\cD}S_\bQ}(SO(3)\endash\mathrm{Sp})$.
\end{lem}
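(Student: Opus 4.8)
The plan is to follow the same strategy used for the toral part and for the dihedral part of $O(2)$--spectra in \cite{BarnesO(2)new}. Since $L_{e_{\cD}S_\bQ}(SO(3)\endash\mathrm{Sp})$ is a left Bousfield localisation of the stable model structure on $SO(3)\endash\mathrm{Sp}$ at the idempotent $e_{\cD}$, and this localisation is smashing (it is induced by an idempotent of the rational Burnside ring, as noted in Section \ref{ideBurnside}), a set of generators is obtained by starting from the standard set of generators $\{SO(3)/K_+\}$ for $SO(3)\endash\mathrm{Sp}$ (all suspensions and desuspensions), as in \cite[Chapter IV, Proposition 6.7]{MandellMay}, localising, and then discarding those that become trivial. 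First I would argue that after localising at $e_{\cD}$, only the orbits $SO(3)/K_+$ with $K$ having non--trivial image in $\cD$ survive; the relevant geometric isotropy lies in $\cD$, i.e. the dihedral subgroups $D_{2n}$ with $n>2$ together with $O(2)$. This reduces the candidate set to suspensions and desuspensions of $e_{\cD}SO(3)/D_{2n+}$ for $n>2$ and $e_{\cD}SO(3)/O(2)_+$, and since $O(2)$ is its own normaliser and a maximal element of $\cD$ with the subspace topology on $\cD$ a sequence converging to $O(2)$, one checks $e_{\cD}SO(3)/O(2)_+ \simeq SO(3)/O(2)_+$, while for the finite dihedral subgroups the splitting localises the orbit to its $e_{D_{2n}}$--summand because the idempotent $e_{D_{2n}}$ is a summand of $e_{\cD}$ and the other summands of $e_{\cD}$ contribute nothing to $\pi^{D_{2n}}_*$ of that orbit. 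This gives exactly the set $\hat{\cG}$.

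Next I would verify the three required properties. Cofibrancy is immediate: each element of $\hat{\cG}$ is a suspension spectrum of an orbit (possibly with an idempotent summand split off), hence cofibrant in the stable model structure and therefore in the localisation, since the two share the same cofibrations by Theorem \ref{thm:LBLatE_MM}. Homotopical compactness follows exactly as in the proof of Theorem \ref{CyclicRestrictionEquivalence}: a smooth compact $G$--manifold (such as an orbit $SO(3)/H$) admits a finite $G$--CW structure by \cite{Illman}, so its suspension spectrum is homotopically compact in $SO(3)\endash\mathrm{Sp}$; since the localisation is smashing, fibrant replacement in $L_{e_{\cD}S_\bQ}(SO(3)\endash\mathrm{Sp})$ commutes with coproducts, and homotopical compactness is preserved; the idempotent summand $e_{D_{2n}}SO(3)/D_{2n+}$ is a retract of a homotopically compact object, hence homotopically compact. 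That they form a set of generators follows because the full set of localised orbits $\{e_{\cD}SO(3)/K_+\}$ generates $L_{e_{\cD}S_\bQ}(SO(3)\endash\mathrm{Sp})$ by \cite[Chapter IV, Proposition 6.7]{MandellMay}, and each such orbit is finitely built (in the triangulated homotopy category) from the elements of $\hat{\cG}$ and their (de)suspensions --- one uses the isotropy separation / cellular filtration of $SO(3)/K_+$ together with the conjugacy relations among subgroups in $\cD$, much as $SO(3)/C_{n+}=\bigvee_{C_m\subseteq C_n}\sigma_m$ was used in Proposition \ref{homComCyclicGen}.

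The main obstacle I anticipate is the bookkeeping in the last step: correctly identifying which orbits survive the localisation and showing that the apparently large generating set of all localised orbits is in fact finitely built from $\hat{\cG}$. The subtlety is that the Weyl group of $D_{2n}$ in $SO(3)$ is $D_{4n}/D_{2n}$ of order $2$ (the element $w \in W$ playing a role analogous to the toral case), that subgroups conjugate to $D_2$ and $D_4$ have been deliberately removed from $\cD$ (so no generator indexed by $D_2$ or $D_4$ appears), and that $O(2)$ sits at the limit point of the sequence $\{D_{2n}\}_{n>2}$ in $\cF(SO(3))/SO(3)$, which is what forces the single orbit $SO(3)/O(2)_+$ (rather than an idempotent summand) into the set. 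Carefully tracking the idempotent summands --- using that $e_{\cD} = \sum$ of idempotents over the points of $\cD$ only in a completed/product sense and that $e_{D_{2n}}$ is an honest summand --- is where the real care is needed, but the argument is otherwise a direct adaptation of \cite[Lemma 5.11]{BarnesO(2)new} (equivalently Lemma \ref{genDihedralAlg}) and the topological generator arguments already given for the toral part.
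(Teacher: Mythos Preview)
Your approach is correct and is essentially an unpacking of what the paper does: the paper's proof is the single sentence ``The proof is the same as the proof of \cite[Lemma 5.14]{BarnesO(2)new}'', and your outline (start from the standard orbit generators of \cite[Chapter IV, Proposition 6.7]{MandellMay}, localise at the smashing idempotent $e_{\cD}$, discard the trivial pieces, then verify cofibrancy and homotopical compactness via \cite{Illman} and the fact that fibrant replacement commutes with coproducts) is precisely what that referenced argument does.

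One small imprecision is worth flagging. Your claim that for a finite dihedral orbit ``the other summands of $e_{\cD}$ contribute nothing'' is not literally true: for $m\mid n$ with $m>2$ the piece $e_{D_{2m}}SO(3)/D_{2n+}$ is nontrivial, since $D_{2m}\leqslant D_{2n}$ and hence $\Phi^{D_{2m}}(SO(3)/D_{2n+})\neq 0$. What is true, and what you correctly fall back on in your final paragraph, is that each such piece is finitely built from the generator $e_{D_{2m}}SO(3)/D_{2m+}\in\hat{\cG}$ (it lives over the single isolated point $(D_{2m})$, where the category is generated by that one object), exactly as in the cyclic analogue you cite from Proposition \ref{homComCyclicGen}. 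So the generation step goes through, just not by the shortcut you first suggest. Also, the reference you want at the end is \cite[Lemma 5.14]{BarnesO(2)new} (the topological generators for the dihedral part of $O(2)$), not Lemma 5.11, which is the algebraic side.
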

\begin{proof} The proof is the same as the proof of \cite[Lemma 5.14]{BarnesO(2)new}.
\end{proof}

To finish the discussion about generators, we show that the restriction functor $$i^*:L_{e_{\cD}S_\bQ}(SO(3)\endash\mathrm{Sp}) \lra L_{i^*(e_{\cD})S_\bQ}(O(2)\endash\mathrm{Sp})$$ preserves generators up to weak equivalence.
\begin{prop}\label{generatorsDihedralI}Recall that $i^*(e_\cD)$ is the idempotent in $\A(O(2))_\bQ$ corresponding to the characteristic function on subgroups $D_{2n}$ for $n>2$ and $O(2)$.
\begin{enumerate}
\item The map $f: O(2)/O(2)_+ \lra i^*(SO(3)/O(2)_+)$ induced by inclusion $O(2) \lra SO(3)$ is a weak equivalence in $L_{i^*(e_{\cD})S_\bQ}(O(2)\endash\mathrm{Sp})$.
\item The map $f_{2n}: e_{D_{2n}}O(2)/{D_{2n}}_+ \lra i^*(e_{D_{2n}}SO(3)/{D_{2n}}_+)$ for $n>2$ induced by inclusion $O(2) \lra SO(3)$ is a weak equivalence in $L_{i^*(e_{\cD})S_\bQ}(O(2)\endash\mathrm{Sp})$
\end{enumerate}
\end{prop}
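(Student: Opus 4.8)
The two statements are parallel, so I would handle them in turn using the same strategy: checking that the map induced by inclusion $O(2)\hookrightarrow SO(3)$ becomes a $\pi_*^H$-isomorphism for every $H$ in the family indexed by $i^*(e_\cD)$, i.e. every $H$ conjugate (in $O(2)$) to some $D_{2n}$ with $n>2$ or to $O(2)$. The first thing to note is that in $L_{i^*(e_\cD)S_\bQ}(O(2)\endash\mathrm{Sp})$, weak equivalences are detected on geometric fixed points $\Phi^H$ for exactly those $H$ (since $i^*(e_\cD)$ is supported on that open-closed subset of $\cF(O(2))/O(2)$ and the localisation is smashing), so it suffices to compare $\Phi^H$ of source and target for these $H$.

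For part (1), the target is $i^*\Sigma^\infty_+ SO(3)/O(2)$. The key geometric input is that $SO(3)/O(2)\cong \mathbb{RP}^2$, and as an $O(2)$-space its $H$-fixed points for $H$ in the dihedral family are easy to identify: for $H=O(2)$ one gets a single point (the coset $eO(2)$), and for $H=D_{2n}$ with $n>2$ one gets (after rationalising and applying $e_{D_{2n}}$) again the contribution of that single coset, since the other fixed points of $D_{2n}$ on $\mathbb{RP}^2$ correspond to subgroups outside the part we are localising at, or are killed. On the other side, $O(2)/O(2)_+$ is just $S^0$ with trivial action, whose $\Phi^H$ is $S^0$ for every $H$. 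So the induced map realises the identity $S^0\to S^0$ on each relevant $\Phi^H$ and is therefore a weak equivalence after the localisation. I would make this precise by citing the analogous computation in \cite[Proof of Lemma 5.15 or thereabouts]{BarnesO(2)new} and in \cite{GreenleesSO3}, adjusting only for the fact that $D_2,D_4$ are now excluded (which only removes terms and cannot break an isomorphism).

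For part (2), I would use the Wirthmüller/induction description: $e_{D_{2n}}SO(3)/{D_{2n}}_+$ and $e_{D_{2n}}O(2)/{D_{2n}}_+$ are both concentrated over the single conjugacy class $(D_{2n})$, and there $\Phi^{D_{2n}}$ of either is computed from the Weyl groups $W_{SO(3)}D_{2n}=D_{4n}/D_{2n}$ and $W_{O(2)}D_{2n}=D_{4n}/D_{2n}$ respectively (recall $N_{SO(3)}D_{2n}=D_{4n}$ for $n>2$, and the same holds in $O(2)$). Since these Weyl groups coincide (both of order $2$), the map $f_{2n}$ induces an isomorphism on $\Phi^{D_{2n}}$, hence on all relevant geometric fixed points (it is trivially an isomorphism, both sides being contractible, at the other spots), so $f_{2n}$ is a weak equivalence in the localised category. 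Again this is the argument of \cite[Lemma 5.15]{BarnesO(2)new}, with the observation that excluding $D_2$ and $D_4$ is exactly what makes all remaining dihedral subgroups behave uniformly (Weyl group of order $2$), so no new phenomenon arises.

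The main obstacle is bookkeeping rather than conceptual: one must be careful that the idempotent $i^*(e_\cD)$ really does pick out precisely the dihedral subgroups of $O(2)$ of order $>4$ together with $O(2)$, and that multiplying by $e_{D_{2n}}$ on each side isolates a single conjugacy class on which the Weyl-group computation is unambiguous; Remark \ref{rmk:different_localisations} and the subgroup analysis of Section \ref{subgroups} supply exactly what is needed. Once the geometric fixed point comparison is set up, both statements reduce to the identity maps $S^0\to S^0$ (part 1) and $\Sigma^\infty_+ W_{SO(3)}D_{2n}\to \Sigma^\infty_+ W_{O(2)}D_{2n}$ with equal Weyl groups (part 2), and invoking that $\Phi^{(-)}$ detects equivalences after the localisation finishes the proof.
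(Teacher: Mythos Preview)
Your approach is essentially the same as the paper's: both arguments detect the equivalences on geometric fixed points $\Phi^H$ for $H$ in $\tcD\setminus\{D_2,D_4\}$ and verify the induced maps are identities there. One small point where the paper is crisper: in part (1) for $H=D_{2n}$ with $n>2$, there are in fact \emph{no} other $D_{2n}$-fixed cosets of $SO(3)/O(2)$ at all---the paper argues directly that if $g\notin O(2)$ then $g^{-1}D_{2n}g\not\subset O(2)$ (using that $N_{SO(3)}C_n=O(2)$ for $n>2$), so $\Phi^{D_{2n}}(f)$ is literally the identity on $S^0$ before any idempotent is applied, and no appeal to ``other fixed points being killed by localisation'' is needed. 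Your part (2) matches the paper's reasoning exactly via $N_{O(2)}D_{2n}=N_{SO(3)}D_{2n}=D_{4n}$.
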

\begin{proof} To show that the map $f: O(2)/O(2)_+ \lra i^*(SO(3)/O(2)_+)$ is a weak equivalence in the given model structure, we need to show that $i^*(e_{\cD})f$ is an equivariant rational $\pi_*$--isomorphism. Thus we need to check that for all subgroups $H \leq O(2)$ the $H$--geometric fixed points $$\Phi^H(i^*(e_{\cD}) f): \Phi^H(i^*(e_{\cD})O(2)/O(2)_+) \lra \Phi^H( i^*(e_{\cD})i^*(SO(3)/O(2)_+))$$ is a non-equivariant rational $\pi_*$-isomorphism.  

Since geometric fixed points commute with smash product and suspensions, for every subgroup $H \not \in (\tcD \setminus \{D_2,D_4\})$, $\Phi^H(i^*(e_{\cD}) f)$ is a trivial map between trivial objects. For $H=O(2)$ the map is an identity on $S^0$ since $O(2)$ is its own normaliser in $SO(3)$. For other $H \in (\tcD \setminus \{D_2,D_4\})$ it is an identity on $S^0$ since there is just one conjugacy class for every $n$ of $D_{2n}$ subgroups in $O(2)$ (and if $g\in SO(3)$ and $g\not \in O(2)$ then $g^{-1}D_{2n}g \not \subset O(2)$). 

Part 2 follows the same pattern, however the domain and codomain of the map $f_{2n}$ are already local in $L_{i^*(e_{\cD})S_\bQ}(O(2)\endash\mathrm{Sp})$, so $f \cong i^*(e_{\cD})f$. Since the idempotent used is $e_{D_{2n}}$ the only non-trivial geometric fixed points will be for the subgroup $H=D_{2n}$. The result follows from the fact that $N_{O(2)}D_{2n}=N_{SO(3)}D_{2n}$, which implies that the map on geometric fixed points for $D_{2n}$ is the identity on ${D_{4n}/D_{2n}}_+$, and that finishes the proof.
\end{proof}

To give an algebraic model for the dihedral part of rational $SO(3)$--spectra we firstly use the restriction--coinduction adjunction in the next theorem to move to a certain part of rational $O(2)$--spectra. Then we show that this part of rational $O(2)$--spectra is a localisation of the dihedral part of rational $O(2)$--spectra from \cite{BarnesO(2)new}. As a result the method of obtaining an algebraic model for this part presented in \cite{BarnesO(2)new} applies in our case almost verbatim.

\begin{thm}\label{iQuillenEqDih}Let $i: O(2) \lra SO(3)$ be an inclusion. Then the adjunction
\[
\xymatrix{
i^\ast \ :\ L_{e_{\cD}S_\bQ}(SO(3)\endash\mathrm{Sp})\  \ar@<+1ex>[r] & \ L_{i^*(e_{\cD})S_\bQ}(O(2)\endash\mathrm{Sp}) \ :\ F_{O(2)}(SO(3)_+,-) \ar@<+0.5ex>[l]
}
\]
is a Quillen equivalence. Note that the idempotent on the right hand side corresponds to the set of all dihedral subgroups of order greater than $4$ together with $O(2)$. 
\end{thm}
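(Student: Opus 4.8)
The plan is to deduce this from the cellularisation principle of \cite{GreenShipCell}, following the same strategy as in the proof of Theorem \ref{CyclicRestrictionEquivalence}. By Corollary \ref{localisedQAdjunctions} the displayed pair $(i^\ast,\,F_{O(2)}(SO(3)_+,-))$ is already a Quillen adjunction, so what remains is to verify the hypotheses of \cite[Theorem 2.1]{GreenShipCell} for a suitable set of cells and to check that the two cellularisations involved are trivial.

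I would take as cells the set $\hat{\mathcal{G}}$ of Lemma \ref{lem:generatorsdihedralSO(3)} on the left and its image $i^\ast(\hat{\mathcal{G}})$ on the right. Since $\hat{\mathcal{G}}$ is already a set of homotopically compact cofibrant generators for $L_{e_{\cD}S_\bQ}(SO(3)\endash\mathrm{Sp})$, cellularising the left-hand category at $\hat{\mathcal{G}}$ returns the same model structure. On the right, $i^\ast$ is left Quillen, so $i^\ast(\hat{\mathcal{G}})$ is cofibrant; it is homotopically compact because $SO(3)/O(2)$ and $SO(3)/D_{2n}$ are compact smooth $O(2)$--manifolds, hence finite $O(2)$--CW complexes by \cite{Illman} with homotopically compact suspension spectra, and each object of $\hat{\mathcal{G}}$ restricts to a retract of such a suspension spectrum in the homotopy category. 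Moreover, by Proposition \ref{generatorsDihedralI} the set $i^\ast(\hat{\mathcal{G}})$ is weakly equivalent to $\{O(2)/O(2)_+\}\cup\{e_{D_{2n}}O(2)/D_{2n+}\mid n>2\}$, which --- arguing as in \cite[Lemma 5.14]{BarnesO(2)new} but using only the subgroups $D_{2n}$, $n>2$, and $O(2)$ singled out by $i^\ast(e_{\cD})$ (Remark \ref{rmk:different_localisations}) --- is a set of homotopically compact cofibrant generators for $L_{i^\ast(e_{\cD})S_\bQ}(O(2)\endash\mathrm{Sp})$. Hence cellularising the right-hand category at $i^\ast(\hat{\mathcal{G}})$ also returns the same model structure, so that the output of \cite[Theorem 2.1]{GreenShipCell} is a Quillen equivalence between the two un-cellularised categories.

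The remaining, and crucial, point is that the derived unit of the adjunction is a weak equivalence at every $X\in\hat{\mathcal{G}}$. Because source and target are built from homotopically compact objects, this can be tested in the homotopy categories, where, by the derived adjunction $(\LL i^\ast,\RR F_{O(2)}(SO(3)_+,-))$, it becomes the assertion that for $X\cong e_{\cD}X$ and arbitrary $Y$ the map
\[
[X,Y]^{SO(3)}\lra i^\ast(e_{\cD})\,[i^\ast X,\,i^\ast Y]^{O(2)}
\]
induced by derived restriction is an isomorphism. This is the dihedral case of \cite[Theorem 6.1]{GreenleesSO3}, whose toral case was invoked in the proof of Theorem \ref{CyclicRestrictionEquivalence}; applying it with $X,Y$ ranging over $\hat{\mathcal{G}}$ gives the required equivalences on generators, and \cite[Theorem 2.1]{GreenShipCell} then completes the argument.

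I expect the main obstacle to be exactly this last step --- the isotropy bookkeeping behind \cite[Theorem 6.1]{GreenleesSO3}. The key geometric input is that removing $D_2$ and $D_4$ from the dihedral family makes $i^\ast(e_{\cD})$ agree with the idempotent of $\A(O(2))_\bQ$ supported on the dihedral subgroups of order $>4$ together with $O(2)$, so that restriction along $O(2)\hookrightarrow SO(3)$ neither loses nor creates geometric isotropy among these subgroups; the normaliser identities $N_{SO(3)}D_{2n}=D_{4n}=N_{O(2)}D_{2n}$ for $n>2$ and $N_{SO(3)}O(2)=O(2)$ used in the proof of Proposition \ref{generatorsDihedralI} are what make this precise. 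Without such a clean match the restriction functor would only become an equivalence after a genuine cellularisation, as happens in the toral case (compare Proposition \ref{notQErest_ind} and Theorem \ref{CyclicRestrictionEquivalence}).
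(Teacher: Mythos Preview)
Your argument via the cellularisation principle is sound and closely mirrors the toral proof of Theorem~\ref{CyclicRestrictionEquivalence}. The paper, however, takes a different route that is worth noting because it exploits a feature special to the dihedral part: here $i^\ast$ is simultaneously a \emph{left} Quillen functor (Corollary~\ref{localisedQAdjunctions}) and a \emph{right} Quillen functor (Proposition~\ref{dihRightAdj}), since the induction--restriction pair $(SO(3)_+\wedge_{O(2)}-,\,i^\ast)$ is also a Quillen adjunction for the dihedral localisations --- unlike the toral case, where Proposition~\ref{notQErest_ind} blocks this. The paper uses Hovey's criterion \cite[Corollary 1.3.16(c)]{Hovey}: it first shows $F_{O(2)}(SO(3)_+,-)$ detects weak equivalences between fibrant objects (using Proposition~\ref{generatorsDihedralI}), and then verifies the derived unit on generators by passing through the \emph{other} adjunction. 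Concretely, the categorical unit $\eta$ of $(SO(3)_+\wedge_{O(2)}-,\,i^\ast)$ at $e_H O(2)/H_+$ is exactly the map of Proposition~\ref{generatorsDihedralI}, hence a weak equivalence; a short diagram chase then shows $i^\ast$ is an isomorphism on hom--sets from generators, which is what is needed.

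What your approach buys is uniformity with the toral argument. What the paper's approach buys is self-containment: it avoids appealing to a ``dihedral case'' of \cite[Theorem~6.1]{GreenleesSO3} and instead deduces the required isomorphism on hom--sets directly from Proposition~\ref{generatorsDihedralI} via the second adjunction. Your last paragraph correctly identifies the isotropy bookkeeping as the crux, and the normaliser identities you cite are precisely the content of Proposition~\ref{generatorsDihedralI}; you could in fact replace your citation of \cite[Theorem~6.1]{GreenleesSO3} by this direct argument and thereby make your proof fully self-contained as well.
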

\begin{proof} This is a Quillen adjunction by Corollary \ref{localisedQAdjunctions} and moreover $i^*$ is a right Quillen functor by Proposition \ref{dihRightAdj}. 

We will use \cite[Corollary 1.3.16 part c]{Hovey}. To show that this adjunction is a Quillen equivalence first notice that $F_{O(2)}(SO(3)_+,-)$ preserves and reflects weak equivalences between fibrant objects. For any fibrant $X \in L_{i^*(e_{\cD})S_\bQ}(O(2)\endash\mathrm{Sp})$ and $H\in \tcD \setminus \{D_2,D_4\}$ we have natural isomorphisms $$[SO(3)/H_+,F_{O(2)}(SO(3)_+,X)]\cong [i^*SO(3)/H_+, X]\cong [O(2)/H_+,X]$$
where the second one follows from Proposition \ref{generatorsDihedralI}. Since weak equivalences between fibrant objects are detected by $H$--homotopy groups, $F_{O(2)}(SO(3)_+,-)$ preserves and reflects weak equivalences between fibrant objects.

We need to show that the derived unit 
$$Y \lra F_{O(2)}(SO(3)_+,\hat{f}i^\ast(Y))$$
is a weak equivalence on cofibrant objects in  $L_{e_{\cD}S_\bQ}(SO(3)\endash\mathrm{Sp})$. 
It is enough to check that the induced map 
$$[X,Y]^{L_{e_{\cD}S_\bQ}(SO(3)\endash\mathrm{Sp})}\cong [X, e_{\cD} Y]^{SO(3)} \lra [X,F_{O(2)}(SO(3)_+,\hat{f}i^\ast (e_{\cD} Y))]^{SO(3)} $$
is an isomorphism for every generator $X$ of $L_{e_{\cD}S_\bQ}(SO(3)\endash\mathrm{Sp})$ (see Lemma \ref{lem:generatorsdihedralSO(3)} for the set of generators). This map fits into the commuting diagram below.

\[
\xymatrix@R=2pc@C=5pc{
 [X, e_{\cD} Y]^{SO(3)} \ar[dr]^{i^\ast} \ar[d]& \  \\
  {[X, F_{O(2)}(SO(3)_+,\hat{f}i^\ast (e_{\cD} Y))]^{SO(3)}} \ar[r]^{\cong} & {[i^\ast X, \hat{f}i^\ast (e_{\cD}Y)]^{O(2)}} \\
}
\]

Since the horizontal map is an isomorphism it is enough to show that $i^\ast$ is an isomorphism on hom sets, where the domain is a generator for $L_{e_{\cD}S_\bQ}(SO(3)\endash\mathrm{Sp})$. We do this by using the second Quillen adjunction between these two categories, namely $(SO(3)_+\wedge_{O(2)}- ,i^*)$. 

Let $\eta$ denotes the categorical unit of the adjunction $(SO(3)_+\wedge_{O(2)}- ,i^*)$. The map $\eta$ on cofibrant generators is of the form  $$\eta_{e_HO(2)/H_+}: e_HO(2)/H_+ \lra e_Hi^*(SO(3)/H_+)$$ 
induced by an inclusion $O(2) \lra SO(3)$.  By Proposition \ref{generatorsDihedralI} this is a weak equivalence in $L_{i^*(e_{\cD})S_\bQ}(O(2)\endash\mathrm{Sp})$ for all $H$ in $\cD$ and thus $-\circ \eta$ induces an isomorphism in homotopy category.
We have the following commuting diagram

\[
\xymatrix@R=2pc@C=5pc{
  [e_HSO(3)/H_+, e_{\cD}Y]^{SO(3)} \ar[dr]^{i^\ast} \ar[d]_{\cong}& \  \\
  {[e_HO(2)/H_+, i^*(e_{\cD}Y)]^{O(2)}}& {[i^*(e_HSO(3)/H_+), i^*(e_{\cD}Y)]^{O(2)}}  \ar[l]^{-\circ \eta}\\ }
\]
where $H$ above denotes a finite dihedral subgroup or $O(2)$ (When $H$ is $O(2)$ we understand $e_H$ as $e_{\cD}$). 

 It follows that $i^*$ is an isomorphism on hom sets and thus the derived unit of the adjunction where $i^*$ is the left adjoint is a weak equivalence in $L_{e_{\cD}S_\bQ}(SO(3)\endash\mathrm{Sp})$ which finishes the proof.
\end{proof}

To obtain the algebraic model for rational $SO(3)$--spectra it is enough to get one for $L_{i^*(e_{\cD})S_\bQ}(O(2)\endash\mathrm{Sp})$. We use the comparison method presented in \cite{BarnesO(2)new} for the dihedral part of rational $O(2)$--spectra in this case.

\begin{thm}\label{summaryDihedral}There exist a zig-zag of Quillen equivalences from $L_{i^*(e_{\cD})S_\bQ}(O(2)\endash\mathrm{Sp})$ to $\Ch(\cA(SO(3), \cD))$.
\end{thm}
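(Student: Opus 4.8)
The plan is to show that the category $L_{i^*(e_{\cD})S_\bQ}(O(2)\endash\mathrm{Sp})$ is, up to a further left Bousfield localisation, the same as the dihedral part $L_{e_{\tcD}S_\bQ}(O(2)\endash\mathrm{Sp})$ of rational $O(2)$--spectra treated in \cite{BarnesO(2)new}, and then to rerun the comparison of \cite{BarnesO(2)new} verbatim, taking into account that we have thrown away the two points $D_2$ and $D_4$. First I would observe, as in Remark \ref{rmk:different_localisations}, that $i^*(e_{\cD})$ is an idempotent in $\A(O(2))_\bQ$ satisfying $i^*(e_{\cD}) = i^*(e_{\cD})\,e_{\tcD}$; hence the localisation $L_{i^*(e_{\cD})S_\bQ}(O(2)\endash\mathrm{Sp})$ is obtained from the dihedral part $L_{e_{\tcD}S_\bQ}(O(2)\endash\mathrm{Sp})$ by a further left Bousfield localisation (the two localisations compose, by the remark after Theorem \ref{thm:LBLatE_MM}). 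Since the dihedral part of the orbit space $\tcD$ is the disjoint union of $\cD$ with the two isolated points $\{D_2\}$ and $\{D_4\}$, this further localisation simply annihilates the two factors of rational $O(2)$--spectra concentrated over $D_2$ and $D_4$, leaving exactly the part indexed by $\cD$.

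Next I would recall the zig--zag of Quillen equivalences from \cite[Section 5]{BarnesO(2)new} relating $L_{e_{\tcD}S_\bQ}(O(2)\endash\mathrm{Sp})$ to $\Ch(\cA(O(2),\tcD))$, and check that each step is compatible with the further localisation described above. Concretely, every intermediate model category in that zig--zag carries an idempotent splitting indexed by the points of $\tcD$ (the maximal torus being normal in $O(2)$, these splittings are honest product decompositions), and the functors in the zig--zag respect these splittings; restricting to the sub-product indexed by $\cD$ therefore yields a zig--zag of Quillen equivalences between $L_{i^*(e_{\cD})S_\bQ}(O(2)\endash\mathrm{Sp})$ and the corresponding sub-product of $\Ch(\cA(O(2),\tcD))$. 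By the construction of $\cA(SO(3),\cD)$ in Section \ref{A(D)} --- which is literally the subcategory of $\cA(O(2),\tcD)$ obtained by starting the indexing at $k=3$ rather than $k=1$, i.e.\ by deleting the $D_2$ and $D_4$ summands --- this sub-product is precisely $\Ch(\cA(SO(3),\cD))$. One verifies using Lemma \ref{genDihedralAlg} and Lemma \ref{lem:generatorsdihedralSO(3)} that the generators match up under the zig--zag, so no additional cellularisation is needed here (unlike in the toral case).

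The main obstacle is the bookkeeping required to confirm that \emph{each} individual step of the \cite{BarnesO(2)new} zig--zag --- passage to fixed-point/Lewis--May functors, the various changes of model structure, the algebraicisation via Shipley's theorem, and the final identification with $\Ch(\cA(O(2),\tcD))$ --- is genuinely a Quillen equivalence after deleting the $D_2$ and $D_4$ factors, i.e.\ that deleting these factors commutes with every functor in the chain and that the relevant properness, cellularity and cofibrant-generation hypotheses survive. Since each functor in \cite{BarnesO(2)new} is built from constructions (smashing localisations at idempotents, geometric fixed points, restriction along a subgroup of finite Weyl index, extension and restriction of scalars) that manifestly respect the idempotent product decomposition, this is routine but lengthy; the correct phrasing of the argument is that we apply the localised versions of all the comparison theorems of \cite[Section 5]{BarnesO(2)new} at the idempotent $i^*(e_{\cD})$, and the proofs go through with only notational changes, the only mathematical input being the identity $i^*(e_{\cD}) = i^*(e_{\cD})\,e_{\tcD}$ and the observation that $\cA(SO(3),\cD)$ is the $\cD$-indexed truncation of $\cA(O(2),\tcD)$. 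Composing with Theorem \ref{iQuillenEqDih} then gives the desired zig--zag of Quillen equivalences between $L_{e_{\cD}S_\bQ}(SO(3)\endash\mathrm{Sp})$ and $\Ch(\cA(SO(3),\cD))$, which is the statement of the theorem.
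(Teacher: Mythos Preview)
Your proposal is correct and follows essentially the same approach as the paper: observe that $i^*(e_{\cD}) = i^*(e_{\cD})\,e_{\tcD}$, so that $L_{i^*(e_{\cD})S_\bQ}(O(2)\endash\mathrm{Sp})$ is a further localisation of the dihedral part of rational $O(2)$--spectra, then rerun the comparison of \cite[Section 5]{BarnesO(2)new} with the generating sets truncated to $k\geqslant 3$. The paper phrases the second half more tersely, simply invoking the Schwede--Shipley tilting theorem \cite[Theorem 5.1.1]{SchwedeShipleyMorita} on the generator sets $\widetilde{\cG}$ and $\cG_a$ rather than walking through each step of the zig--zag and its compatibility with the idempotent splitting; but this amounts to the same argument. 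One small point: your final sentence composes with Theorem \ref{iQuillenEqDih} to reach $SO(3)$--spectra, which is the content of the \emph{next} theorem (Theorem \ref{thm:summary_d}), not of the statement you are proving here.
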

\begin{proof} Notice that $L_{i^*(e_{\cD})S_\bQ}(O(2)\endash\mathrm{Sp})$ is a localisation of the dihedral part of rational $O(2)$--spectra $L_{e_{\tcD}S_\bQ}(O(2)\endash\mathrm{Sp})$ at an idempotent $i^*(e_{\cD})$, since $i^*(e_{\cD})e_{\tcD}=i^*(e_{\cD})$. 
The set $$\widetilde\cG:=\{O(2)/O(2)_+\} \cup \{e_{D_{2n}}{O(2)/D_{2n}}_+ | n>2 \}$$
is a set of homotopically compact, cofibrant generators for $L_{i^*(e_{\cD})S_\bQ}(O(2)\endash\mathrm{Sp})$ by the same argument as in \cite[Lemma 5.14]{BarnesO(2)new}. 

Thus it is enough to use the proof of \cite[Theorem 5.18]{BarnesO(2)new} based on use the tilting theorem of Schwede and Shipley, \cite[Theorem 5.1.1]{SchwedeShipleyMorita} 
 restricted to the set of generators $\widetilde{\cG}$ for $L_{i^*(e_{\cD})S_\bQ}(O(2)\endash\mathrm{Sp})$ on one hand and the set of generators  $\cG_a$ (see Lemma \ref{genDihedralAlg}) on the algebraic side. This shows that $L_{i^*(e_{\cD})S_\bQ}(O(2)\endash\mathrm{Sp})$ is Quillen equivalent to the category $\Ch(\cA(SO(3),\cD))$. 
\end{proof}

Theorem \ref{iQuillenEqDih} and Theorem \ref{summaryDihedral} give the algebraic model for the dihedral part of rational $SO(3)$--spectra.

\begin{thm}\label{thm:summary_d}There is a zig-zag of Quillen equivalences between $L_{e_{\cD}S_\bQ}(SO(3)\endash\mathrm{Sp})$ and $\cA(SO(3), \cD)$.
\end{thm}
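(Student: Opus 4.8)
The plan is to simply chain together the two Quillen equivalences already established in this section. Theorem \ref{iQuillenEqDih} provides a Quillen equivalence
\[
i^\ast \ :\ L_{e_{\cD}S_\bQ}(SO(3)\endash\mathrm{Sp})\ \rightleftarrows\ L_{i^*(e_{\cD})S_\bQ}(O(2)\endash\mathrm{Sp})\ :\ F_{O(2)}(SO(3)_+,-),
\]
and Theorem \ref{summaryDihedral} provides a zig--zag of Quillen equivalences from $L_{i^*(e_{\cD})S_\bQ}(O(2)\endash\mathrm{Sp})$ to $\Ch(\cA(SO(3),\cD))$. First I would concatenate these: since a composite of Quillen equivalences is a Quillen equivalence, the resulting zig--zag connects $L_{e_{\cD}S_\bQ}(SO(3)\endash\mathrm{Sp})$ with $\Ch(\cA(SO(3),\cD))$.

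The only point that needs a word of care is bookkeeping about what is meant by ``$\cA(SO(3),\cD)$'' versus ``$\Ch(\cA(SO(3),\cD))$'' in the statement. As explained in Section \ref{A(D)}, the category $\cA(SO(3),\cD)$ has no intrinsic grading, so the algebraic model is the category of chain complexes $\Ch(\cA(SO(3),\cD))$ with its projective model structure. Thus the phrase ``zig--zag of Quillen equivalences between $L_{e_{\cD}S_\bQ}(SO(3)\endash\mathrm{Sp})$ and $\cA(SO(3),\cD)$'' in the statement of Theorem \ref{thm:summary_d} is shorthand for a zig--zag ending at $\Ch(\cA(SO(3),\cD))$, exactly as produced above. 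No further model-categorical work is required; the entire content has already been carried out in Theorems \ref{iQuillenEqDih} and \ref{summaryDihedral}.

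There is essentially no obstacle here: the theorem is a summary statement whose proof is the one-line observation that composites of Quillen equivalences are Quillen equivalences. The genuinely substantive steps were (i) checking that the restriction--coinduction adjunction becomes a Quillen equivalence after passing to the appropriate localisations, which required the generator comparison of Proposition \ref{generatorsDihedralI} and the identification $i^*(e_\cD)e_{\tcD} = i^*(e_\cD)$, and (ii) recognising $L_{i^*(e_{\cD})S_\bQ}(O(2)\endash\mathrm{Sp})$ as a localisation of the dihedral part of rational $O(2)$--spectra so that the tilting argument of \cite[Theorem 5.18]{BarnesO(2)new} applies verbatim to the generating sets $\widetilde{\cG}$ and $\cG_a$. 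Both of these are dispatched before the present statement, so the proof of Theorem \ref{thm:summary_d} is immediate.
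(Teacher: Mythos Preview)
Your proposal is correct and matches the paper's approach exactly: the paper has no explicit proof of this theorem, only the sentence ``Theorem \ref{iQuillenEqDih} and Theorem \ref{summaryDihedral} give the algebraic model for the dihedral part of rational $SO(3)$--spectra'' immediately preceding the statement. Your observation that ``$\cA(SO(3),\cD)$'' in the statement is shorthand for $\Ch(\cA(SO(3),\cD))$ is also correct and is simply a minor notational slip in the paper.
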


%%%%%%%%%%%%%%%%%%%%%%%%%%%%%%%%%%%%%%%%%%%%%%%%%%%

\section{The exceptional part}\label{Exceptional part_chapter}
The last part of rational $SO(3)$--spectra, $L_{e_{\cE} S_{\bQ}}(SO(3)\endash\mathrm{Sp})$, captures the bahaviour of conjugacy classes of five subgroups: $SO(3)$, $\Sigma_4$, $A_4$, $A_5$ and $D_4$, see Section \ref{subgroups}.

\begin{defn}\label{defn:exc_subgp}\cite[Definition 2.1]{KedziorekExceptional} Recall, that a subgroup $H$ of $G$ is \textbf{exceptional} if three conditions are satisfied: 
\begin{itemize}
\item there is an idempotent $e_{(H)} \in \A(G)_\bQ$ corresponding to the conjugacy class of $H$
\item the Weyl group of $H$, $N_GH/H$ is finite and 
\item $H$ does not contain any subgroup $K$, such that $H/K$ is a (non-trivial) torus.
\end{itemize}
\end{defn}

Notice that all subgroups in this part satisfy the definition above, hence the name \textbf{exceptional part}. By \cite[Theorem 4.4]{Barnes_Splitting} we have the following result. 
\begin{prop}\label{prop:exceptional_splitting}
There is a strong symmetric monoidal Quillen equivalence 
\[
\xymatrix{
\triangle\ :\ L_{e_{\cE}S_\bQ}SO(3) \endash\mathrm{Sp}_\bQ\ \ar@<+1ex>[r] & \ \prod_{(H), H\in \cE}\ L_{e_{(H)_{SO(3)}}S_\bQ}(SO(3)\endash\mathrm{Sp})\ :\Pi \ar@<+0.5ex>[l]
}
\]
\end{prop}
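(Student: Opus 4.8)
The plan is to deduce this directly from Barnes's splitting theorem, just as \emph{Proposition~\ref{prop:splittingSO(3)}} was, but applied one level below. First I would recall the description of the idempotents from Section~\ref{ideBurnside}: under tom Dieck's isomorphism $\A(SO(3))_\bQ \cong C(\cF(SO(3))/SO(3),\bQ)$, the idempotent $e_{\cE}$ is the characteristic function of the subspace $\cE$, and since $\cE$ is a disjoint union of the five isolated points $(SO(3))$, $(\Sigma_4)$, $(A_4)$, $(A_5)$, $(D_4)$, it decomposes as a sum $e_{\cE} = e_{SO(3)} + e_{\Sigma_4} + e_{A_4} + e_{A_5} + e_{D_4}$ of mutually orthogonal idempotents, each dominated by $e_{\cE}$ (so that $e_H e_{\cE} = e_H$ for every $H\in\cE$). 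Each $e_H S_\bQ$ is a cofibrant object and the localisations $L_{e_H S_\bQ}(SO(3)\endash\mathrm{Sp})$ exist by Theorem~\ref{thm:LBLatE_MM}.

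Next I would apply \cite[Theorem~4.4]{Barnes_Splitting}. In the form used for \emph{Proposition~\ref{prop:splittingSO(3)}} it takes an orthogonal decomposition of the unit and splits off one Bousfield localisation per summand; here the relevant unit is that of $L_{e_{\cE}S_\bQ}(SO(3)\endash\mathrm{Sp})$, which (up to its local replacement) is $e_{\cE}S_\bQ$, and the five idempotents above decompose it. Barnes's theorem then produces a strong symmetric monoidal Quillen equivalence between $L_{e_{\cE}S_\bQ}(SO(3)\endash\mathrm{Sp})$ and the product $\prod_{H\in\cE} L_{e_H(e_{\cE}S_\bQ)}\bigl(L_{e_{\cE}S_\bQ}(SO(3)\endash\mathrm{Sp})\bigr)$, with the diagonal $\triangle$ as left adjoint and the product $\Pi$ as right adjoint. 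The monoidality is automatic because every localisation at a Burnside-ring idempotent is smashing, as recorded after Theorem~\ref{thm:LBLatE_MM}.

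The only remaining point — and it is a mild technicality rather than a genuine obstacle — is the identification of each factor, namely that $L_{e_H(e_{\cE}S_\bQ)}\bigl(L_{e_{\cE}S_\bQ}(SO(3)\endash\mathrm{Sp})\bigr)$ is the \emph{same} model category as $L_{e_H S_\bQ}(SO(3)\endash\mathrm{Sp})$. By the iterated-localisation remark after Theorem~\ref{thm:LBLatE_MM}, localising $SO(3)\endash\mathrm{Sp}$ first at $e_{\cE}S_\bQ$ and then at $e_H S_\bQ$ is the localisation at $e_H S_\bQ \wedge e_{\cE}S_\bQ$, and since $e_H e_{\cE}=e_H$ in $\A(SO(3))_\bQ$ this smashing localisation coincides with $L_{e_H S_\bQ}(SO(3)\endash\mathrm{Sp})$. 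Substituting these identifications into the product yields precisely the stated $(\triangle,\Pi)$ Quillen equivalence; so the proposition is essentially a formal consequence of Barnes's splitting theorem together with the iterated-localisation remark. Each factor $L_{e_H S_\bQ}(SO(3)\endash\mathrm{Sp})$ is then analysed separately by the methods of \cite{KedziorekExceptional} in the remainder of this section.
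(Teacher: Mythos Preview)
Your proposal is correct and takes essentially the same approach as the paper, which simply cites \cite[Theorem~4.4]{Barnes_Splitting} without further elaboration. You have merely spelled out the details of how that theorem applies here---the orthogonal decomposition $e_{\cE}=\sum_{H\in\cE}e_H$ and the iterated-localisation identification of the factors---which the paper leaves implicit.
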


First we recall some details on what will be the building block of the algebraic model for the exceptional part, i.e. the category $\Ch(\bQ[W_GH])$ of chain complexes of $\bQ[W_GH]$--modules and then we summarise the monoidal comparison from \cite{KedziorekExceptional}.

\subsection{The category $\Ch(\bQ[W])$}\label{chainExcep}
Suppose $W$ is a finite group. The category of chain complexes of left $\bQ[W]$--modules can be equipped with the projective model structure, where weak equivalences are homology isomorphisms and fibrations are levelwise surjections. 
This model structure is cofibrantly generated by \cite[Section 2.3]{Hovey}. 

Note that $\bQ[W]$ is not generally a commutative ring, however it is a Hopf algebra with cocommutative coproduct given by $\Delta : \bQ[W] \lra \bQ[W] \otimes \bQ[W]$, $g \mapsto g \otimes g$. This allows us to define an associative and commutative tensor product on $\Ch(\bQ[W])$, namely tensor over $\bQ$, where the $W$--action on the $X \otimes_\bQ Y$ is diagonal. The unit is a chain complex with $\bQ$ at the level 0 with trivial $W$--action and zeros everywhere else and it is cofibrant in the projective model structure. 
The monoidal product defined this way is closed, where the internal hom is given by a formula for an internal hom in $\bQ$--modules with $W$--action given by conjugation. 

By \cite[Proposition 4.3]{BarnesFiniteG} the category $\Ch(\bQ[W])$ is a monoidal model category satisfying the monoid axiom. 

\subsection{Monoidal comparison}

The following result is the main theorem of \cite{KedziorekExceptional}. 
\begin{thm}\label{thm:exceptional}Suppose $G$ is any compact Lie group. Then there is a zig-zag of symmetric monoidal Quillen equivalences from $L_{e_{(H)_{G}}S_\bQ}(G\endash\mathrm{Sp})$ of rational  $G$--spectra over an exceptional subgroup $H$  to ${\Ch(\bQ[W_{G}H])}$ equipped with a projective model structure.
\end{thm}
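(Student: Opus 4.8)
The plan is to construct the zig--zag in three stages: first reduce from $G$ to the normaliser $N := N_GH$, then from $N$ to the finite Weyl group $W := W_GH = N/H$ (finite by the second clause of Definition \ref{defn:exc_subgp}), and finally identify the resulting category of ``free'' rational $W$--spectra with $\Ch(\bQ[W])$ by a monoidal Morita argument. Every category appearing carries a symmetric monoidal structure, and each comparison functor is to be strong symmetric monoidal.

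First I would pass from $G$ to $N$ along the inclusion $i\colon N\lra G$, using the restriction--coinduction adjunction: by Lemma \ref{QuillenAdjForIandH} this is already a Quillen pair with left adjoint $i^\ast$ and right adjoint $F_N(G_+,-)$, between $L_{e_{(H)_G}S_\bQ}(G\endash\mathrm{Sp})$ and $L_{i^\ast(e_{(H)_G})S_\bQ}(N\endash\mathrm{Sp})$. Since the $G$--conjugates of $H$ that lie in $N$ form a single $N$--conjugacy class, $i^\ast(e_{(H)_G})$ equals the idempotent $e_{(H)_N}$, so no further localisation on the $N$--side is needed. To promote the Quillen pair to a Quillen equivalence I would imitate the argument of Theorem \ref{CyclicRestrictionEquivalence}: take the localisation of $G/H_+$ as a single homotopically compact cofibrant generator on the $G$--side (it is homotopically compact because $G/H$ is a smooth closed $G$--manifold, hence a finite $G$--CW complex by Illman's theorem), and check that the derived unit is a weak equivalence on it; after passing to homotopy categories this amounts to the statement that $Li^\ast$ is an isomorphism on hom--sets between $e_{(H)_G}$--local objects, the analogue of the input used in Theorem \ref{CyclicRestrictionEquivalence}. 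As restriction and coinduction are both strong symmetric monoidal on these localised categories, this stage is monoidal.

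Next I would pass from $N$ to $W$. Since $H$ is normal in $N$, the $H$--geometric fixed point functor $\Phi^H\colon N\endash\mathrm{Sp}\lra W\endash\mathrm{Sp}$ is strong symmetric monoidal, and the third clause of Definition \ref{defn:exc_subgp} --- that $H$ has no normal subgroup $K$ with $H/K$ a non--trivial torus --- is precisely the hypothesis that forces $\Phi^H$ to carry a rational $N$--spectrum with geometric isotropy concentrated at $H$ to a $W$--spectrum with geometric isotropy concentrated at the trivial subgroup, and to be essentially surjective onto that subcategory. (This is where the behaviour differs from the toral and dihedral parts, where such torus quotients do occur.) After the appropriate localisation on the $W$--side I would therefore obtain a symmetric monoidal Quillen equivalence $L_{e_{(H)_N}S_\bQ}(N\endash\mathrm{Sp})\simeq_Q L_{e_1S_\bQ}(W\endash\mathrm{Sp})$, where $e_1\in\A(W)_\bQ$ is the idempotent of the trivial subgroup, i.e. the free summand of rational $W$--spectra.

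Finally I would identify $L_{e_1S_\bQ}(W\endash\mathrm{Sp})$ with $\Ch(\bQ[W])$. This category has a single homotopically compact cofibrant generator, $W_+\wedge S_\bQ$, whose endomorphism ring spectrum is the Eilenberg--MacLane ring spectrum on the group ring $\bQ[W]$, and this spectrum is formal as a differential graded algebra over $\bQ$; applying the tilting theorem of Schwede--Shipley \cite[Theorem 5.1.1]{SchwedeShipleyMorita} in the monoidal form recorded for finite groups in \cite{BarnesFiniteG} yields the required zig--zag of symmetric monoidal Quillen equivalences onto $\Ch(\bQ[W])$ with the projective model structure and the monoidal product of Section \ref{chainExcep}. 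Concatenating the three stages would establish the theorem. The hard part will be the monoidal bookkeeping of this last stage: because $\bQ[W]$ is non--commutative one must treat its module category as modules over a Hopf--algebra object rather than over a commutative ring object, and invoke the corresponding monoidal Morita theory, while establishing formality of the endomorphism ring spectrum and ruling out higher $A_\infty$--obstructions; pinning down the single compact generator in the first two stages and checking the derived unit on it via the relevant $\pi_\ast^K$--computations is the other delicate point.
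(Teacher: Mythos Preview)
Your three--stage plan matches the paper's own sketch (the theorem is quoted from \cite{KedziorekExceptional}; the paper only outlines the argument). Stages one and two are essentially the paper's: reduction $G\to N$ via restriction--coinduction, then $N\to W$ to land in $L_{e_1S_\bQ}(W\endash\mathrm{Sp})$. One small discrepancy: for the second step the paper invokes the \emph{categorical} fixed point--inflation adjunction $(\varepsilon^\ast,(-)^H)$ rather than geometric fixed points $\Phi^H$; after localising at $e_{(H)_N}$ and $e_1$ these agree up to Quillen equivalence, so your variant is fine, though $\Phi^H$ sits less comfortably in a Quillen pair than $(-)^H$ does. Also, your claim that $i^\ast(e_{(H)_G})=e_{(H)_N}$ on the nose is not argued; the paper is content with $e_{(H)_N}\,i^\ast(e_{(H)_G})=e_{(H)_N}$ and a further localisation, which is the safer route.

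The real divergence is stage three. The paper does \emph{not} run a Morita/tilting argument on the single generator $W_+$. Instead it keeps the $W$--action external throughout: restrict the universe to pass from $L_{e_1S_\bQ}(W\endash\mathrm{Sp})$ to naive rational orthogonal spectra with $W$--action $\mathrm{Sp}[W]$, then to symmetric spectra with $W$--action, then to $H\bQ$--modules with $W$--action, and finally apply Shipley's theorem \cite{ShipleyHZ} (which is already symmetric monoidal) objectwise to reach $\Ch(\bQ)[W]$, which is identified with $\Ch(\bQ[W])$ as monoidal model categories. The payoff of this route is exactly the point you flag as ``the hard part'': because the $W$--action is carried along as a diagram shape, the symmetric monoidal structure (tensor over $\bQ$ with diagonal $W$--action) is transported automatically at every step, and no Hopf--algebra Morita theory or $A_\infty$--formality statement for $\bQ[W]$ is needed. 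Your tilting approach would certainly give the underlying (non--monoidal) Quillen equivalence, but the ``monoidal form recorded for finite groups in \cite{BarnesFiniteG}'' that you appeal to is in fact established there by the same external--action method the paper uses, not by a Hopf--algebra version of \cite{SchwedeShipleyMorita}; so your stage three, as written, would end up re--deriving the paper's chain rather than shortcutting it.
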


We apply the result above in case $G=SO(3)$ to get the algebraic model for the exceptional part of rational $SO(3)$--spectra.
\begin{thm}\label{thm:exceptional_part}
There is a zig-zag of symmetric monoidal Quillen equivalences from $L_{e_{\cE}S_{\bQ}}(SO(3)\endash\mathrm{Sp})$ to ${\prod_{(H),H\in \cE} \Ch(\bQ[W_{SO(3)}H])}$
\end{thm}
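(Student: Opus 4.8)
The plan is to reduce the statement to the single-subgroup result, Theorem~\ref{thm:exceptional}, via the splitting of the exceptional part into its five conjugacy-class components. First I would invoke Proposition~\ref{prop:exceptional_splitting}, which already gives a strong symmetric monoidal Quillen equivalence
\[
\xymatrix{
\triangle\ :\ L_{e_{\cE}S_\bQ}(SO(3)\endash\mathrm{Sp})\ \ar@<+1ex>[r] & \ \prod_{(H),\, H\in \cE}\ L_{e_{(H)_{SO(3)}}S_\bQ}(SO(3)\endash\mathrm{Sp})\ :\ \Pi\,. \ar@<+0.5ex>[l]
}
\]
This is the first link in the zig-zag, and it is monoidal since the product of symmetric monoidal model categories is symmetric monoidal and the diagonal is strong monoidal.

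Next I would apply Theorem~\ref{thm:exceptional} with $G=SO(3)$ to each of the five factors separately: for each conjugacy class $(H)$ with $H\in\cE$ (namely $H\in\{SO(3),\Sigma_4,A_4,A_5,D_4\}$, whose Weyl groups are $\{e\},\ \{e\},\ \Sigma_4/A_4\cong C_2,\ \{e\},\ \Sigma_4/D_4\}$ respectively) there is a zig-zag of symmetric monoidal Quillen equivalences from $L_{e_{(H)_{SO(3)}}S_\bQ}(SO(3)\endash\mathrm{Sp})$ to $\Ch(\bQ[W_{SO(3)}H])$. Here one uses that each such $H$ is indeed exceptional in the sense of Definition~\ref{defn:exc_subgp}: there is an idempotent $e_{(H)}\in\A(SO(3))_\bQ$ (the characteristic function of the isolated point $(H)$ in the discrete space $\cE\subset\cF(SO(3))/SO(3)$), the Weyl group is finite, and no nontrivial torus quotient occurs since these subgroups are finite (or all of $SO(3)$, which has no such quotient either). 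Then I would take the product of these five zig-zags: a zig-zag of Quillen equivalences $C_i \simeq_Q D_i$ for $i=1,\dots,5$ induces, stage by stage, a zig-zag $\prod_i C_i \simeq_Q \prod_i D_i$, and each stage is symmetric monoidal because the product of (strong/lax) symmetric monoidal Quillen functors is again one for the product model structures. Concatenating with the equivalence from Proposition~\ref{prop:exceptional_splitting} yields the desired zig-zag from $L_{e_{\cE}S_\bQ}(SO(3)\endash\mathrm{Sp})$ to $\prod_{(H),H\in\cE}\Ch(\bQ[W_{SO(3)}H])$.

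The only genuinely substantive point — everything else being a formal assembly of already-proved statements — is the verification that each of the five subgroups in $\cE$ satisfies Definition~\ref{defn:exc_subgp}, so that Theorem~\ref{thm:exceptional} actually applies; this is where one must quote the subgroup analysis and normaliser computations of Section~\ref{subgroups} (in particular that $N_{SO(3)}A_4=N_{SO(3)}D_4=\Sigma_4$, $N_{SO(3)}\Sigma_4=\Sigma_4$, $N_{SO(3)}A_5=A_5$), together with the identification $\A(SO(3))_\bQ\cong C(\cF(SO(3))/SO(3),\bQ)$ from Section~\ref{ideBurnside} which supplies the needed idempotents. I expect the main (and really the only) obstacle to be bookkeeping: making sure the monoidal structure is tracked correctly through the product of zig-zags, i.e. that at each intermediate stage the relevant Quillen functor on the product is strong (resp.\ lax) symmetric monoidal, which follows since monoidal structure on a finite product of model categories is computed factorwise and a product of strong monoidal functors is strong monoidal.
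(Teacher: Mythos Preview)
Your proposal is correct and follows exactly the same approach as the paper: the paper's proof is the single line ``This follows from Proposition~\ref{prop:exceptional_splitting} and Theorem~\ref{thm:exceptional}.'' Your additional remarks (verifying that each $H\in\cE$ satisfies Definition~\ref{defn:exc_subgp} and tracking the monoidal structure through the product) merely spell out what the paper takes as already established in Sections~\ref{subgroups} and~\ref{ideBurnside}.
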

\begin{proof} This follows from Proposition \ref{prop:exceptional_splitting} and Theorem \ref{thm:exceptional}.
\end{proof}

Below we present a short sketch of steps in the monoidal comparison for rational $G$--spectra over an exceptional subgroup to outline general ideas. We refer the reader to \cite{KedziorekExceptional} for all the details. 

Fix an exceptional subgroup $H$ in $G$. First we move from the category ${L_{e_{(H)_G}S_{\bQ}}(G\endash\mathrm{Sp})}$ to the category $L_{e_{(H)_N} S_{\bQ}}(N\endash\mathrm{Sp})$ using the restriction--coinduction adjunction, where $N$ denotes the normalizer $N_GH$. The second step is to use the fixed point--inflation adjunction between $L_{e_{(H)_N} S_{\bQ}}(N\endash\mathrm{Sp})$  and $L_{e_1 S_{\bQ}}(W\endash\mathrm{Sp})$, where $W$ denotes the Weyl group $N/H$. Recall that $W$ is finite, as $H$ is an exceptional subgroup of $G$. Next we use the restriction of universe to pass from $L_{e_1 S_{\bQ}}(W\endash\mathrm{Sp})$ to the category $\mathrm{Sp}[W]$ of rational orthogonal spectra with $W$--action. We then pass to symmetric spectra with $W$--action using the forgetful functor from orthogonal spectra and then to $H\bQ$-modules with $W$--action in symmetric spectra. From here we use \cite[Theorem 1.1]{ShipleyHZ} to get to $\Ch(\bQ)[W]$, the category of rational chain complexes with $W$--action, which is equivalent as a monoidal model category to $\Ch(\bQ[W])$, the category of chain complexes of $\bQ[W]$-modules. That gives an algebraic model which is compatible with the monoidal product, i.e. this zig-zag of Quillen equivalences induces a strong monoidal equivalence on the level of homotopy categories.

\end{document}